\theoremstyle{theorem}
\newtheorem{theorem}{Theorem}[section]
\newtheorem{prop}[theorem]{Proposition}
\newtheorem{co}[theorem]{Corollary}
\newtheorem{lemma}[theorem]{Lemma}
\newtheorem{cor}[theorem]{Corollary}
\theoremstyle{definition}
\newtheorem{defn}{Definition}
\theoremstyle{remark}
\newtheorem{remark}{Remark}
\newtheorem{eg}{Example}
\numberwithin{equation}{section}
\def\t{\textnormal}
\def\g{{\gamma}}
\def\O{{\omega}}
\def\[{\left[}
\def\]{\right]}
\def\[{\left[}
\def\]{\right]}
\def\({\left(}
\def\){\right)}
\def \fp{\frak p}
\def\C{\mathbb{C}}
\def\R{\mathbb{R}}
\def\Z{\mathbb{Z}}
\def\Q{\mathbb{Q}}
\def\F{\mathbb{F}}
\def\O{\mathcal{O}}
\def\GL2{\text{GL_2}}
\def\GL1{\text{GL_1}}
\newcommand{\BZ}{{\mathbb Z}}
\newcommand{\G}{{\Gamma}}
\newcommand{\SL}{{\text{SL}_2(\BZ)}}
  \let\g\gamma
\let\e\varepsilon    \let\k\chi
\def\C{\mathbb C}
\def\G{\mathbf G}
\def\bk{\mathbf k}
\def\CL{\mathcal L}
\def\E{\mathcal E}
\def\T{\mathcal T}
\def\CL{\mathcal L}
\def \Ind {\textnormal{Ind}}
\def\g{\gamma}
\def \gal{\textnormal{Gal}}
\def \GL2 {{\text{GL}_2}}
\def \inj{\hookrightarrow}
\def \gp{\mathfrak p}
\def \q {{\mathfrak q}}
\def\Tr{{\rm Tr}}
\def\Gal{{\rm Gal}}
\def\AF{{\rm Fr}}
\def\Frob{{\rm Frob}}
\def\F{{\mathbb F}}
\def\Z{{\mathbb Z}}
\def\Q{{\mathbb Q}}
\def\O{\mathcal O}
\def \R {\mathcal R}
\def\G{\Gamma}
\def\Qlbar {\overline{\Q}_\ell}
\def \GL {\textnormal{GL}}
\def \sym {\textnormal{Sym}}
\def \HT {{\rm HT}}
\def \tv {{\rm{v}}}
\def \wt {\widetilde}
\def \bk{ \color{black} }
\def \k {\kappa}
\def \bk {\color{black}}
\title[Galois representations  associated to noncongruence modular forms]{Potentially $\GL_2$-type Galois representations  associated to noncongruence modular forms }
\author{Wen-Ching Winnie Li}
\address{Department of Mathematics, Pennsylvania State University,
University Park, PA 16802, USA} \email{wli@math.psu.edu}
\author{Tong Liu}
\address{Department of Mathematics, Purdue University, West Lafayette, IN 47907, USA}\email{tongliu@math.purdue.edu}
\author{Ling Long}
\address{Department of Mathematics, Louisiana State University, Baton Rouge, LA 70803, USA}
\email{llong@lsu.edu}
\date{}
\subjclass[2000]{ 11F11,11F80}
\keywords{Galois representations, noncongruence modular forms, automorphy}
\thanks{Li was supported in part by NSF grant DMS \#1101368, Simons Foundation grant \# 355798, and MOST grant 105-2811-M-001-002, Liu by NSF grant DMS \#1406926, and Long by NSF grants DMS \#1303292 and \#1602047. Part of the paper was written when Li and Long were research fellows at the Institute for Computational and Experimental Research in Mathematics (ICERM) in Fall 2015. They would like to thank ICERM for its hospitality. Li would also like to thank the Institute of Mathematics, Academia Sinica in Taiwan for its support and hospitality in spring 2016 when she worked on this paper.
}
\begin{document}

\begin{abstract}
In this paper, we consider  representations of  the absolute Galois group $\text{Gal}(\overline {\mathbb Q}/\mathbb Q)$ attached to modular forms for noncongruence subgroups of $\text{SL}_2(\mathbb Z)$.  When the underlying modular curves have a model over $\Q$, these representations are constructed by Scholl in \cite{sch85b} and are referred to as Scholl representations, which form a large class of motivic Galois representations.  In particular, by a result of Belyi, Scholl representations include the Galois actions on the Jacobian varieties of algebraic curves defined over $\mathbb Q$.  As Scholl representations are motivic, they are expected to correspond to automorphic representations according to the Langlands philosophy. Using recent developments on automorphy lifting theorem, we obtain various automorphy and potential automorphy results for potentially $\GL_2$-type Galois representations  associated to noncongruence modular forms. Our results are applied to various kinds of examples. Especially, we obtain potential automorphy results for  Galois representations attached to an infinite family of spaces of weight 3 noncongruence cusp forms of  arbitrarily large dimensions.
\end{abstract}

\maketitle
\section{Introduction}In a series of papers \cite{LLY,ALL,Long08,HLV}, the authors investigated 4-dimensional $\ell$-adic representations of $G_\Q:=\text{Gal}(\overline \Q/\Q)$ arising from noncongruence cusp forms constructed by Scholl \cite{sch85b} which admit quaternion multiplication (QM) as given in Definition \ref{def:QM}. In each case, it is shown that Galois representations are automorphic in the sense that they correspond to automorphic representations of $\GL_4$ over $\Q$ as described by the Langlands program. (See Definition \ref{def:automorphic} for more details on automorphy and potential automorphy.) These automorphy results are obtained via the Faltings-Serre method which boils down to comparing the  Galois representations associated with noncongruence modular forms and those attached to automorphic targets  which were identified by extensive search in the modular form database. In \cite{ALLL}, a general automorphy result for 4-dimensional Scholl representations admitting quaternion multiplication was obtained using then newly established Serre conjecture and modularity lifting theorems. In the literature, there are many constructions of 4-dimensional Scholl representations with QM including families of 2-dimensional abelian varieties with QM (in the sense that their endomorphism algebra contains a quaternion algebra) which are parameterized by Shimura curves \cite{Shimura, Deligne}.   In \cite{DFLST}, the authors used hypergeometric functions over finite fields to study some natural families of Galois representations which are potentially of $\GL_2$-type (see Defintion \ref{def:GL2} below). In particular, they constructed
 a family of 8-dimensional  $\ell$-adic Galois representations $\{\rho_{\ell,t}\}$ of $G_\Q$ such that for each parameter $t\in \Q\smallsetminus \{0,1\}$ there is a Galois extension $F(t)/\Q$ depending on $t$ and upon enlarging the scalar field the restrictions decompose as $$\rho_{\ell,t}|_{G_{F(t)}}\cong \tilde{\sigma}_{\ell,t}\oplus\tilde{\sigma}_{\ell,t}\oplus\tilde{\sigma}_{\ell,t}\oplus\tilde{\sigma}_{\ell,t}.$$ Here and later $G_F$ denotes the absolute Galois group of the field $F$, and $\tilde{\sigma}_{\ell,t}$ is an irreducible 2-dimensional $\ell$-adic representation of $G_{F(t)}$. More details are given in \S \ref{ss:wt-2}. Representations behaving like $\rho_{\ell,t}$ are called potentially 2-isotypic (cf. Definition \ref{def:GL2}). For example, 4-dimensional Scholl representations with QM are potentially 2-isotypic. Such representations can be described quite explicitly using Clifford theory recalled in \S \ref{sec:clifford}.

 The aim of this paper is to establish the potential automorphy of Scholl representations potentially of $\GL_2$-type or
potentially 2-isotypic using recent important advances in automorphy lifting theorem. It should be pointed out that  most of the known (potential) automorphy criteria are for regular representations, namely those with distinct Hodge-Tate weights. On the other hand, the Scholl representations attached to a $d$-dimensional space of cusp forms of weight $\kappa \ge 2$ for a finite index subgroup of $\SL$ have Hodge-Tate weights $0$ and $1-\kappa$, each with multiplicity $d$ (cf. \cite{sch85b}). Hence they are highly irregular when $d>1$.  This paper is motivated by the hope that, for a Scholl representation potentially of $\GL_2$-type, there is a good chance that the Hodge-Tate weights would be evenly distributed among the $2$-dimensional subrepresentations so that the known criteria could be applied to conclude its potential automorphy.

Our main results are as follows.
\bigskip

\noindent {\bf Theorem A.} (Theorem \ref{thm-try})  {\it Let $F$ be a totally real field and $\{ \eta_\ell : G_F \to \GL _2 (\Qlbar) \}$ be a system of 2-dimensional $\ell$-adic Galois representations for all primes $\ell$. Suppose there exist a finite extension $K$ of $F$ and a compatible system of Scholl representations $\{\rho_\ell\}$ such that for each $\ell$,  $ \eta_\ell | _{G_K}$ is a  subrepresentation of $\rho_\ell|_{G_K}$.  Then there exists a set $\CL$ of {rational} primes of Dirichlet density 1 so that for each $\ell \in \CL$, $\eta_\ell$ is \emph{totally odd},
 i.e. $\det \eta_\ell (c) = -1$ for any complex conjugation $c \in G_F$,
and potentially automorphic. Moreover, $\eta_\ell$ is automorphic if $F=\Q$ or $\eta_\ell$ is potentially reducible.}
\bigskip

Here a representation of a group $G$ is called {\it potentially reducible} if it is reducible when restricted to a finite index subgroup of $G$.

As consequences of Theorem A, one obtains sufficient conditions for (potential) automorphy of Scholl representations, Proposition \ref{prop-save} and Corollary \ref{co-save}, which are applied to examples studied in this paper.
 Theorem A is also used to prove
\bigskip

\noindent {\bf Theorem B.} (Theorem \ref{thm:main2})  {\it Let $\{\rho_\ell\}$ be a compatible system of $2d$-dimensional semi-simple subrepresentations of Scholl representations of $G_\Q$. Assume that there is a finite Galois extension $F/\Q$ such that all $\rho_\ell$ are 2-isotypic when restricted to $G_F$.
Suppose that $F$ contains some solvable extension $K/\Q$ such that for each $\ell$ the representation $\rho_\ell \simeq \Ind_{G_K}^{G_\Q} \sigma_\ell$ for a $2$-dimensional representation $\sigma_\ell$.
Then  all $\rho_\ell$ are automorphic. }

\bigskip

So far the automorphy of degree-$2d$ Scholl representations attached to spaces of weight-$\kappa$ noncongruence  cusp forms  is known systematically for $d=1$ and $\kappa \ge 2$ as a consequence of the Serre's conjecture established by Khare and Wintenberger \cite{KW09} and Kisin \cite{Ki9}, and certain cases (e.g. $\text{GO}_4$-type) of $d=2$ and odd $\kappa \ge 3$ obtained  by Liu and Yu in \cite{LY16}. Other known automorphy results are also for low degree  Scholl representations including   \cite{LLY,ALL,Long08, L5, HLV,  ALLL}.  Applying the results in this paper, we prove the potential automorphy for an infinite family of explicitly constructed Scholl representations with unbounded degrees, extending the automorphy results shown in \cite{LLY,ALL,Long08} alluded above.

For $n \ge 2$ denote by $\rho_{n,\ell}$ the $2(n-1)$-dimensional $\ell$-adic Scholl representation attached to the space of weight-$3$ cusp forms for the index-$n$ normal subgroup $\Gamma_n$ of the congruence subgroup $\Gamma^1(5)$ constructed in \cite{ALL}.  As explained in \S5.3, the representation $\rho_{n, \ell}$ decomposes as
$$ \rho_{n, \ell} = \bigoplus_{d|n, ~d>1} \rho_{d, \ell}^{new},$$
\noindent where $\rho_{n, \ell}^{new}$, as $\ell$ varies, form a compatible system of representations of $G_\Q$ of dimension $2\phi(n)$, and $\phi(n)$ is the degree of the cyclotomic field $\Q(\zeta_n)$ over $\Q$ with $\zeta_n$ a primitive $n$-th root of unity.
\bigskip

\noindent{\bf Theorem C.} (Theorem \ref{thm:potentialauto}) {\it For $n \ge 2$, there are $2$-dimensional $\ell$-adic representations $ \sigma_\ell $ of $G_{\Q(\zeta_n)}$ whose semi-simplifications form a compatible system, such that $\rho_{n,\ell}^{new} \cong \Ind_{G_{\Q(\zeta_n)}}^{G_\Q} \sigma_\ell$ for all primes $\ell$. For each $\ell$ the representation $\rho_{n,\ell}^{new}$ is potentially automorphic. Further, it is automorphic if either $n \le 6$ or $\sigma_\ell$ is potentially reducible.}
\bigskip

The (potential) automorphy results combined with the Atkin-Swinnerton-Dyer  congruences satisfied by the coefficients of noncongruence modular forms obtained in \cite{sch85b} provide a link between the Fourier coefficients of noncongruence modular forms with those of automorphic forms.  A much less expected consequence is illustrated in \cite{LL}, where the automorphy of a $2$-dimensional Scholl representation in turn shed new lights on the arithmetic properties of the associated $1$-dimensional space $S_\kappa$ of noncongruence cusp forms of integral weight $\kappa \ge 2$. More precisely, the automorphy is a key ingredient to prove that any cusp form in $S_\kappa$ with $\Q$-Fourier coefficients is a cusp form for a congruence subgroup if and only if its Fourier coefficients have bounded denominators. This result settles in part a long standing conjecture in the area of noncongruence modular forms.

This paper is organized as follows. Basic notation and definitions are given in Section \ref{ss:Preliminaries}, where potential automorphy results for degree-$2$ Galois representations and Clifford theory are reviewed.  Section \ref{sec: Scholl}  is devoted to Scholl representations and their 2-dimensional subrepresentations. {Our goal is to use advances on automorphy lifting theorem to prove Theorem A and its consequences. Owing to the irregularity of Scholl representations and their tensors in general, the approach in \cite{BGGT} cannot be applied directly. We pursue a variation by first using the results in  \cite{BGGT} to choose a suitable set $\CL$ of rational primes of Dirichlet density $1$; then for each $\ell\in \CL$ such that $\eta_\ell$ is not potentially reducible (the difficult case), we study properties of the reduction of $\eta_\ell$ and its symmetric square, which lead to the proof of Theorem A by applying the known potential automorphy results summarized in Theorem \ref{thm-known}.
In Section \ref{sec:GL1&2}, we study absolutely irreducible Scholl representations which are potentially of $\GL_1$- or $\GL_2$-type and prove Theorem B.   We end this section with a few potentially 2-isotypic examples of (potentially) automorphic Scholl representations attached to weight-2 and weight-4 cusp forms,} one of which  was originally computed by Oliver Atkin. Theorem C is proved in
 Section \ref{sec:wt-3} by realizing the Scholl representations $\rho_{n,\ell}$ as acting on the second \'etale cohomology of an elliptic surface $\E_n$ with an explicit model. When restricted to the Galois group of the cyclotomic field $\Q(\zeta_n)$, $\rho_{n,\ell}^{new}$ decomposes into the sum of a $2$-dimensional subrepresentation $\sigma_{n,\ell}$ and its conjugates by $\gal(\Q(\zeta_n)/\Q)$. Using the symmetries on $\E_n$ we determine the trace of $\sigma_{n,\ell}$ (Theorem \ref{thm:trace})  from which it follows that the semi-simplifications of $\sigma_{n,\ell}$ are compatible as $\ell$ varies. The information on $\Tr ~\sigma_{n,\ell}$ implies that the degree-$4$ representation $\tau_{n,\ell}$ of the Galois group of the totally real subfield of $\Q(\zeta_n)$ induced from $\sigma_{n,\ell}$ is $2$-isotypic over $\Q(\zeta_{2n})$ and Proposition \ref{prop-save} is then applied to conclude the potential automorphy of $\tau_{n,\ell}$ and hence $\rho_{n,\ell}$.  As a by-product of the trace computation, we obtain an estimate of certain character sums of Weil type, Corollary 5.5.  In \S \ref{ss:5.7} we show that $\tau_{n,\ell}$ admits QM over $\Q(\zeta_{2n})$, generalizing the known results for $n=3,4,6$ discussed in \cite{ALLL}. The QM structure provides an alternative approach to the potential automorphy of $\tau_{n,\ell}$ by appealing to results in \cite{ALLL}. Finally we remark that the same method can be used to obtain similar potential automorphy results for several infinite families of Scholl representations attached to weight-$3$ cusp forms for noncongruence subgroups of $\Gamma_1(6)$  constructed in the work of Fang et al \cite{L5}.

\bigskip

{\bf Acknowledgment.} The authors are grateful to the anonymous referee for the valuable suggestions and careful reading of this paper. They would also like to thank Michael Larson and Chia-Fu Yu for enlightening conversations and Fang-Ting Tu for her computational assistance. 

\section{Preliminaries}\label{ss:Preliminaries}
\subsection{Basic Notation}Let $F$ be a number field and $v$ a finite place of $F$. Denote by $F_v$ the completion of $F$ at $v$, $G_F:= \gal (\overline \Q/ F)$ the \emph{absolute Galois group} of $F$,  in which the absolute Galois group $G_{F_v}:= \gal (\overline F_v / F_v)$ of $F_v$ can be imbedded. The inertia subgroup  $I_v$ at $v$ consists of elements of $G_{F_v}$ which induce trivial action on the residue field of $F_v$. We use $\AF_v$ to denote the conjugacy class in $G_F$ of the \emph{arithmetic Frobenius} at $v$ which induces the Frobenius automorphism of the residue field of $F_v$, and use $\text{Frob}_v:=\AF_v^{-1}$ to denote  the \emph{geometric Frobenius} at $v$. In this paper, an \emph{$\ell$-adic Galois representation} of $G = G_F$ or $G_{F_v}$ is a continuous homomorphism from $G$ to the group of $\Qlbar$-linear automorphisms of a finite-dimensional $\Qlbar$-vector space.  We use  $\Qlbar$ (instead of a finite extension of $\Q_\ell$) as the scalar field for convenience.  By a \emph{character} we mean a $1$-dimensional representation. A  representation of $G_F$ or $G_{F_v}$ is said to be \emph{unramified} at $v$ if it is trivial on the inertia subgroup $I_v.$ 

For brevity, we write  $\sigma \otimes \eta$ for the tensor  $\sigma \otimes_{\Qlbar } \eta$ of two $\ell$-adic representations $\eta$ and $\sigma$ of the same group. Suppose $F$ is Galois over $\Q$. Then for any representation $\rho$ of $G_F$ and $g \in G_\Q$, we can define another representation $\rho ^g$ of $G_F$, called the \emph{conjugate} of $\rho$ by $g$, via $\rho ^g (h): = \rho (g^{-1} h g)$ for all $h \in G_F$. It is easy to check that,  up to equivalence, $\rho ^g$ depends only on $g \in \gal (F/ \Q)$.

\subsection{$\GL_2$-type and potentially 2-isotypic}
\begin{defn} \label{def:GL2}Let $r$ be a positive integer. A continuous  semisimple  $\ell$-adic Galois representation $\rho_\ell$ of $G_\Q$  is said to be \emph{potentially of $\GL_r$-type} if there is a finite Galois extension $F/\Q$ such that the restriction  $\rho_\ell|_{G_F}$ decomposes into a direct sum of $r$-dimensional irreducible subrepresentations.
If, in addition, the $r$-dimensional subrepresentations are all isomorphic, $\rho_\ell$ is called \emph{potentially $r$-isotypic}. When $F=\Q$, one simply says $\rho_\ell$ is  \emph{of $\GL_r$-type} or \emph{$r$-isotypic} accordingly.

\end{defn}

In this paper we will be mostly concerned with  \emph{potentially of $\GL_2$-type} and \emph{potentially $2$- isotypic} representations.
As an example, consider the curve $C_{a,b} : y^2=x^6+ax^4+bx^2+1$ with $a, b\in \Q$. For generic choices of $a, b\in \Q$, it has genus 2. Let $\rho_{\ell,a,b}$  denote the  4-dimensional  $\ell$-adic Galois representation of $G_\Q$ arising from the Tate module of $\ell$-power torsion points on the Jacobian of $C_{a,b}$, which is known to be semi-simple. On $C_{a,b}$ there is the involution $\tau_1:(x,y)\mapsto (-x,y)$ defined over $\Q$. Its induced action on the representation space of $\rho_{\ell,a,b}$ decomposes the space into  eigenspaces $\sigma_{\ell,a,b,\pm}$ with eigenvalues $\pm 1$, both invariant under the Galois action. Therefore $\rho_{\ell,a,b} \cong \sigma_{\ell,a,b,+}\oplus \sigma_{\ell,a,b,-}$, and hence is of $\GL_2$-type. When $a=b$,  there is another  map $\tau_2: (x,y)\mapsto( \frac 1x,  \frac y{x^3})$ on $C_{a,a}$ defined over $\Q$.
It is straightforward to check that both $\tau_1$ and $\tau_2$ have order 2 and  $\tau_1\tau_2\tau_1^{-1}\tau_2^{-1}=
(\tau_1\tau_2)^2$ sends $(x,y)$ to $(x,-y)$ which induces the multiplication by $-1$ map on the Jacobian of $C_{a,a}$.  In other words, the induced actions of $\tau_1$ and $\tau_2$ on the Jacobian of $C_{a,a}$ anticommute with each other, thus $\tau_2$ intertwines the two representations $\sigma_{\ell,a,b,+}$ and $\sigma_{\ell,a,b,-}$ so that they are isomorphic 2-dimensional representations of $G_\Q$.
This makes $\rho_{\ell,a,a}$ a  prototype of
 $2$-isotypic representations.  Observe that $C_{a,a}$ is a two-fold cover of the elliptic curve
$E_a:y^2=x^3+ax^2+ax+1$ (assuming the discriminant of the curve is non-zero) which gives rise to a compatible system of $2$-dimensional $\ell$-adic representations $\sigma_{\ell,a}$ of $G_\Q$.  Thus $\sigma_{\ell,a}$ is isomorphic to  a subrepresentation of $\rho_{\ell,a,a}$. As $\rho_{\ell,a,a}$ is 2-isotypic, one concludes that $\rho_{\ell,a,a}\cong \sigma_{\ell,a}\oplus \sigma_{\ell,a}$. Later in Section \ref{ss:wt-2} we will see similar examples with $F$ being nontrivial extensions of $\Q$.

\subsection{Local properties and $\tau$-Hodge-Tate weights}{Let $F$ be a number field and $\rho$ be an $\ell$-adic representation of $G_F$. In this subsection we discuss  the local property of $\rho$ at a place $v$ of $F$ dividing $\ell$ via the $\ell$-adic Hodge theory. The reader is referred to \cite{BL}  for basic notions such as crystalline representations, de Rham representations, Hodge-Tate weights, etc. in $\ell$-adic Hodge theory.
Recall that  an $\ell$-adic Galois representation $\rho : G_F \to \GL_n (\Qlbar)$ is called \emph{geometric} if $\rho$ is unramified almost everywhere and $\rho|_{G_{F_v}}$ is de Rham for each prime $v$ of $F$ dividing $\ell$. For the remainder of this paper, we always assume an $\ell$-adic Galois representation is geometric unless otherwise specified.}

Each place $v$ of $F$ dividing $\ell$ corresponds to an embedding $\tau : F \to \Qlbar$, and $\tau$ extends to an embedding of $F_v$ in $\Qlbar.$
The \emph{$\tau$-Hodge-Tate weights} of $\rho$ is defined to be the multiset ${\rm HT}_\tau (\rho)$ consisting of the integers $i$ with multiplicity equal to
$$\dim _{\Qlbar} (\rho|_{G_{F_v}} \otimes_{\tau, F_v} \C_\ell (-i ))^{G_{F_v}} .$$
Here $\C_{\ell}$ denotes  the completion of $\Qlbar$ and $\C_\ell(-i)$ is the usual notation for Tate twists.

For example,  if $\rho =   \epsilon_\ell $ is the $\ell$-adic cyclotomic character, whose value at a geometric Frobenius element $\Frob_w$ at a finite place $w$ of $F$ not dividing $\ell$ is the inverse of the residual cardinality at $w$, then $\HT_\tau (\epsilon_\ell)= \{1\}$. When $F= \Q$, the trivial embedding $\tau$ will be omitted from  the notation $\HT_\tau$. Collected below are some useful facts concerning Hodge-Tate weights.

\begin{lemma}\label{lem:HT} Let $\rho$ and $\gamma$ be two $\ell$-adic Galois representations of $G_F$. Then \begin{enumerate}\item $\HT_\tau (\rho \otimes \gamma) = \{a_\tau+ b_\tau| a_\tau \in \HT_\tau (\rho), b_\tau \in \HT _\tau ({\g})\}.$
\item Suppose that $F/\Q$ is Galois. Then $\HT_{\tau} (\rho ^g) = \HT _{\tau g^{-1} } (\rho)$ for any $ g\in \gal (F/\Q)$.
\item  Suppose that $F$ is totally real and $\rho$ is a 1-dimensional geometric $\ell$-adic  representation. Then $\HT_\tau (\rho)$ is independent of the embedding $\tau : F \to \Qlbar$.
\end{enumerate}

\end{lemma}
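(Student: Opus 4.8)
The three assertions are all of "formal properties of Hodge--Tate weights" type, so the plan is to reduce each to standard facts from $\ell$-adic Hodge theory plus a short Galois-descent or symmetry argument.

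For part (1), the plan is to work place-by-place. Fix a place $v\mid\ell$ of $F$ with associated embedding $\tau$. By definition $\HT_\tau(\rho)$ records, with multiplicity, the integers $i$ for which the $G_{F_v}$-invariants of $\rho|_{G_{F_v}}\otimes_{\tau,F_v}\C_\ell(-i)$ are nonzero; equivalently, $\rho|_{G_{F_v}}\otimes_{\tau,F_v}\C_\ell \cong \bigoplus_i \C_\ell(-i)^{m_i}$ as $\C_\ell$-semilinear $G_{F_v}$-modules (Hodge--Tate decomposition), and similarly for $\g$. Then one takes the tensor product over $\C_\ell$ of the two Hodge--Tate decompositions and uses $\C_\ell(-a)\otimes_{\C_\ell}\C_\ell(-b)\cong\C_\ell(-a-b)$ together with the fact that $\dim_{\C_\ell}(\C_\ell(j))^{G_{F_v}}$ is $1$ if $j=0$ and $0$ otherwise (Tate); counting multiplicities gives the claimed multiset identity. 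The only mild point to check is that $\rho\otimes\g$ is again Hodge--Tate at $v$ (it is de Rham, hence Hodge--Tate, since each factor is), so the decomposition exists.

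For part (2), the plan is to unwind the definition of the conjugate $\rho^g$. If $v$ is the place of $F$ attached to the embedding $\tau$, then conjugation by $g\in\gal(F/\Q)$ carries $v$ to the place $v'$ attached to $\tau g^{-1}$, and it identifies the decomposition groups $G_{F_{v'}}\xrightarrow{\sim}G_{F_v}$ compatibly with the embeddings of the completions into $\Qlbar$. Under this identification $\rho^g|_{G_{F_{v'}}}$, base-changed along $\tau g^{-1}$, is isomorphic to $\rho|_{G_{F_v}}$ base-changed along $\tau$; taking $G_{F_v}$-invariants of the Tate twists then yields $\HT_{\tau}(\rho^g)=\HT_{\tau g^{-1}}(\rho)$. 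This is essentially bookkeeping with the dictionary between places over $\ell$ and embeddings $F\hookrightarrow\Qlbar$, so no real analytic input beyond part of the setup is needed.

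For part (3), suppose $F$ is totally real and $\rho$ is a one-dimensional geometric representation, i.e.\ a continuous character $\rho:G_F\to\Qlbar^\times$ that is de Rham at all $v\mid\ell$. The plan is: by class field theory $\rho$ corresponds to a Hecke character, and a de Rham (equivalently, locally algebraic at $\ell$) Hecke character of a totally real field has the form "algebraic part $\cdot$ finite-order part", with the algebraic part a product of integer powers of the norm characters at the archimedean/$\ell$-adic places; one then reads off that $\HT_\tau(\rho)$ equals the single integer appearing in that algebraic part, uniformly in $\tau$. Alternatively — and this is the cleaner route I would take — one argues directly: a finite-order twist does not change Hodge--Tate weights, so one may assume $\rho$ has infinite image; then $\rho^{[F:\Q]}$ (or the transfer/norm to $G_\Q$) is a character of $G_\Q$, whose unique Hodge--Tate weight is some integer $w$, and the behaviour of $\HT_\tau$ under this norm operation combined with part (1)-style additivity forces each $\HT_\tau(\rho)$ to equal $w/1$ independently of $\tau$. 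I expect the slightly delicate point here to be handling the finite-order part correctly (a finite-order character is Hodge--Tate of weight $0$ but is not crystalline in general), and making sure the "totally real" hypothesis is genuinely used — it is what guarantees there is no CM-type ambiguity, so that the algebraic part of the associated Hecke character is an integer multiple of the norm rather than a more general combination of embeddings. That is the main obstacle; parts (1) and (2) are routine once the $\C_\ell$-semilinear decompositions are written down.
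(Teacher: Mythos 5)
Your treatments of (1) and (2) are correct and essentially the paper's own: (1) is the tensor-of-Hodge--Tate-decompositions computation (the paper simply says it follows from the definition, and your remark that $\rho\otimes\g$ is Hodge--Tate is covered by the standing assumption that all representations are geometric), and (2) is the same place/embedding bookkeeping the paper carries out by lifting $g$ to $\tilde g\in G_\Q$ and transporting the invariant spaces $(\rho|_{G_{F_v}}\otimes_{\tau,F_v}\C_\ell(-i))^{G_{F_v}}$. For (3), your \emph{first} route --- a de Rham character of $G_F$ corresponds to an algebraic Hecke character, and for totally real $F$ its infinity type is an integer power of the norm times a finite-order character --- is exactly what the paper invokes via the discussion before Lemma A.2.1 of \cite{BGGT} (phrased there as: the weights depend only on $\tau$ restricted to the maximal CM subfield, which for totally real $F$ yields a single weight). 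If you carry out that route you are on the paper's track.

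The ``cleaner alternative'' for (3) that you say you would actually take has a genuine gap. First, $\rho^{[F:\Q]}$ is still a character of $G_F$, not of $G_\Q$; what lives on $G_\Q$ is the composition of $\rho$ with the transfer map, equivalently $\det \Ind_{G_F}^{G_\Q}\rho$ up to a finite-order twist. More importantly, the only relation this produces is additive: the single Hodge--Tate weight $w$ of the transferred character equals the sum over all embeddings $\tau$ of the $\tau$-weights of $\rho$. Knowing the sum does not force the individual $\HT_\tau(\rho)$ to be equal, so the step ``additivity forces each $\HT_\tau(\rho)$ to equal $w$ independently of $\tau$'' fails. Indeed, the argument as written never uses that $F$ is totally real (despite your own caveat that it must be used), and applied verbatim to an imaginary quadratic field it would prove a false statement: an algebraic Hecke character of infinity type $\sigma^a\bar\sigma^{\,b}$ with $a\neq b$ has transfer of weight $a+b$ but distinct $\tau$-weights. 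One really needs the Artin--Weil unit-theorem argument (or the \cite{BGGT} reference) showing that the infinity type of an algebraic character of a totally real field is a power of the norm; that is the content of your first route, and it cannot be bypassed by the norm/transfer trick.
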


\begin{proof}(1) It can be easily checked from the definition.

(2) Let $\tilde g \in G_\Q$ be a lift of $g$. We easily check that the map $\sum _i v_i \otimes a_i \mapsto \sum_i v_i \otimes \tilde g(a_i) $ induces a $\Qlbar$-linear bijection between $(\rho|_{G_{F_v}} \otimes_{\tau, F_v} \C_\ell (-i ))^{G_{F_v}}$ and $(\rho|_{G_{F_{g(v)}}} \otimes_{\tau, F_{g(v)}} \C_\ell (-i ))^{G_{F_g(v)}}$. Then the claim follows.

(3) Since $F$ is totally real by assumption, so is its maximal CM (complex multiplication) subfield. We apply the discussion before of Lemma A.2.1 of \cite{BGGT} to conclude that $\HT_\tau (\rho)$ (which is a singleton) does not depend on any embedding $\tau: F \to \Qlbar$.
\end{proof}

 \subsection{Compatible system of Galois representations and automorphy} \label{subsection:CS}
 \begin{defn}\label{def:compatiblefamily} Let $F$ be a number field.  For  each finite place $v$ of $F$, denote by $\t{rch}(v)$ the residual characteristic of $v$. Following \cite{BGGT}, by \emph{a rank $n$   compatible system of $\ell$-adic Galois representations $\R$ of $G_F$ defined over  $E$} we mean a quadruple
$$\{E, S , \{Q_\gp(X)\}, \{\rho_\lambda\}\}$$
where
\begin{itemize}
\item[1.] $E$ is a number field;
\item[2.] $S$ is a finite set of places of $F$;
\item[3.] for each finite place $\gp$ of $F$ outside $S$, $Q_\gp(X)$ is a monic degree $n$ polynomial in $E[X]$;
\item[4.] for each finite place $\lambda$ of $E$,
$$\rho_\lambda : G_F \longrightarrow \t{GL}_n(\overline{E}_\lambda)$$
is a continuous semi-simple representation such that
 \item $\rho_\lambda$ is unramified at finite places $\gp$ of $F$ outside $S$ with $\t{rch}(\gp) \ne \t{rch}(\lambda)$, and
$\rho _\lambda(\Frob_\gp)$ has characteristic polynomial $Q_\gp(X)$.

\end{itemize}
If we further assume
\begin{itemize}
\item $\rho_\lambda|_{G_{F_\gp}}$ is de Rham at finite places $\gp$ of $F$ with $\t{rch}(\gp) = \t{rch}(\lambda)$,  and further, it is crystalline if $\gp \not \in S$,
\end{itemize}
and the quadruple satisfies
\begin{itemize}
\item for each prime $\ell$ and each embedding $\tau: F \to \Qlbar $, there exists a fixed set $\tv_\tau$  of integers such that $\t{HT}_\tau (\rho_\lambda)= \tv_\tau$ for all places $\lambda$ of $E$ dividing $\ell$,
\end{itemize}
then we call the quintuple $\{E, S , \{Q_\gp[X]\}, \{\rho_\lambda\}, \{\tv_\tau\}\} $ a \emph{strongly compatible system}.
\end{defn}

We warn readers  that the above definition differs  from that in \cite{BGGT}: the strongly compatible system here is called  \emph{weakly compatible system} in \cite{BGGT}. Here we follow the terminology in the classical setting  by Serre in \cite{Serre}. The properties of $\rho_\lambda$ at primes above $\ell$  were added to the definition of compatible system in \cite{Serre} only in the recent decade. We adapt the above version which is convenient for our purposes. \bk
\begin{defn}\label{def:automorphic}
An $\ell$-adic Galois representation $\rho: G_F \to \t{GL}_n(\Qlbar)$ is called \emph{automorphic} if there exist  an isomorphism $\iota : \Qlbar \to \mathbb C$ and an automorphic representation $\pi\simeq  \otimes'_v \pi_v$ of $\GL_n (\mathbb A_F)$ such that for almost all primes $v$ of $F$ the (Frobenius-semi-simplification of the) Weil-Deligne representation associated to $\rho|_{G_{F_v}}$ via $\iota$ is isomorphic to the Weil-Deligne representation associated to $\pi_v$  as described by the local Langlands correspondence. (See, for instance, \cite{BGGT} for details.)
In particular,
$\iota$ sends the eigenvalues of the characteristic polynomial of $\rho(\Frob_v)$ to the Satake parameters of $\pi_v$ for almost all places $v$ of $F$. We call $\rho$ \emph{potentially automorphic} if there exists a finite extension $F'$ of $F$ so that $\rho|_{G_{F'}}$ is automorphic.
\end{defn}

Let $F'$ be a soluble extension of $F$. According to the  solvable base change in \cite{AC89} by Arthur and Clozel, if a representation $\rho$ of $G_F$ is automorphic, then so is its restriction $\rho|_{G_{F'}}$.  Conversely, if a representation $\sigma$ of $G_{F'}$ is automorphic,  then so is the induced  representation $\text{Ind}_{G_{F'}}^{G_F} \sigma$ of $G_F$.

\subsection{ Potential automorphy results for degree-$2$ Galois representations.}\label{subsec-add} We summarize the known (potential) automorphy results for $2$-dimensional Galois representations which will be used later in the paper. Let $F$ be a totally real field and $\eta: G_F \to \GL_2(\Qlbar)$ an $\ell$-adic Galois representation. We call $\eta$ (totally) \emph{odd} if $\det (\eta) (c) = -1$ for any complex conjugation $c\in G_F$.
For any finite-dimensional $\ell$-adic representation $\sigma$ of $G_F$, its ambient space always contains a $G_F$-stable $\O_{\Qlbar}$-lattice $L$. Let $\mathfrak m$ be the maximal ideal of $\O_{\Qlbar}$.  Then $\sigma$ induces an action $\bar \sigma$ of $G_F$ on the quotient space $L/\mathfrak m L$ over the residue field $\overline \F_\ell$, called  the \emph{reduction} of $\sigma$.
It is well-known that the semi-simplification of $\bar \sigma$ does not depend on the choice of the lattice $L$.

\begin{theorem}\label{thm-known} Let $F$ be a totally real field and $\eta: G_F \to \GL_2(\Qlbar)$ be a  continuous representation. Assume the following:
\begin{itemize}
\item [(a)]$\eta$ is irreducible and unramified almost everywhere;
\item [(b)] $F$ is unramified at $\ell$;  for each prime $v$ of $F$ above $\ell$, $\eta|_{G_{F_v}}$ is crystalline and for each embedding $\tau: F \to \Qlbar$, $\t{HT}_{\tau}(\eta)=\{a_\tau, a_\tau +b_\tau\}$ with $0 < b _\tau <(\ell-1)/2$;
\item [(c)] $\sym^2 \bar \eta|_{G_{F(\zeta_\ell)}}$ is irreducible;
\item [(d)] $\ell >7$.
\end{itemize}

\noindent Then the following statements hold:
\begin{enumerate}
\item There is a finite totally real Galois extension $F'/F$ such that $\eta|_{G_{F'}}$ is automorphic.

\item Suppose that the $2$-dimensional $\ell$-adic  representations $\eta_i, i= 1, \dots , m,$ of $G_F$ satisfy the assumptions (a)-(d).  Then there exists a finite totally real Galois extension $F'/F$ such that $\eta_i |_{G_{F'}}$ is automorphic for all $1 \le i \le m$.

\item If $F=\Q$, then $\eta$ is automorphic (modular).

\end{enumerate}
\end{theorem}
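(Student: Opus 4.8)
The plan is to recognize that all three statements are compiled from the existing automorphy literature, so the work is not to prove anything from scratch but to check that hypotheses (a)--(d) feed exactly into the available potential automorphy, Serre's conjecture, and modularity lifting theorems.

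For parts (1) and (2), I would first observe that a $2$-dimensional representation is automatically essentially self-dual, $\eta^\vee\cong\eta\otimes(\det\eta)^{-1}$ with the pairing $\eta\times\eta\to\det\eta$ alternating, so that $\eta$ is polarizable of symplectic type. By (b) the Hodge--Tate weights $\{a_\tau,a_\tau+b_\tau\}$ are distinct (since $b_\tau>0$), so $\eta$ is regular; and since $0<b_\tau<(\ell-1)/2$, both $\eta$ and $\sym^2\eta$ are crystalline at each $v\mid\ell$ with Hodge--Tate weights lying, after a cyclotomic twist, in the Fontaine--Laffaille range $[0,\ell-2]$, so $\eta|_{G_{F_v}}$ is potentially diagonalizable. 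Conditions (c) and (d) supply the residual image input: irreducibility of $\sym^2\bar\eta|_{G_{F(\zeta_\ell)}}$ forces $\bar\eta|_{G_{F(\zeta_\ell)}}$ to be absolutely irreducible, and for $\ell>7$ this makes its image adequate in the sense of Thorne and excludes the ``bad dihedral'' configuration barred in the lifting theorems. With these ingredients I would invoke the potential automorphy theorems of Barnet-Lamb--Gee--Geraghty--Taylor \cite{BGGT} (building on Taylor's potential modularity and Harris--Shepherd-Barron--Taylor): one produces a finite totally real Galois extension $F'/F$ over which $\bar\eta|_{G_{F'}}$ is realized in the cohomology of a suitable auxiliary family of varieties over $F'$ and is congruent there to a Hilbert modular eigenform, and then a Fontaine--Laffaille automorphy lifting theorem promotes this to automorphy of $\eta|_{G_{F'}}$. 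For (2) one runs the same argument, using that the Moret-Bailly / Hilbert irreducibility step producing $F'$ can be set up to impose the finitely many realization conditions for $\eta_1,\dots,\eta_m$ simultaneously, yielding one $F'$ that works for all of them; this simultaneous form is already built into the arguments of \cite{BGGT}.

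For part (3), with $F=\Q$, I would first deduce that $\eta$ is totally odd: by (1) there is a totally real $F'$ with $\eta|_{G_{F'}}$ automorphic, hence attached to a Hilbert modular form and therefore totally odd, and choosing $F'\hookrightarrow\R$ compatibly with a fixed complex conjugation $c\in G_\Q$ puts $c$ into $G_{F'}$ as a complex conjugation, so $\det\eta(c)=-1$. Then $\bar\eta$ is odd and, by (c), absolutely irreducible with $\bar\eta|_{G_{\Q(\zeta_\ell)}}$ irreducible, so Serre's conjecture, proved by Khare--Wintenberger \cite{KW09} and Kisin \cite{Ki9}, shows $\bar\eta$ is modular. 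Finally, by (b) together with (c)--(d), Kisin's modularity lifting theorem---the Fontaine--Mazur conjecture for $\GL_2$, \cite{Ki9}---propagates modularity from $\bar\eta$ to $\eta$ itself.

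Since every step cites a published theorem, the only real obstacle is the hypothesis-matching, and the point that needs the most care is the residual image analysis: arguing that irreducibility of $\sym^2\bar\eta|_{G_{F(\zeta_\ell)}}$ and $\ell>7$ genuinely yield adequacy and exclude every dihedral or exceptional configuration forbidden by the lifting theorems, while also tracking the cyclotomic twist bringing $\eta$ and $\sym^2\eta$ into the Fontaine--Laffaille window and, for (2), the uniformity of the auxiliary totally real field $F'$.
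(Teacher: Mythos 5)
There is a genuine gap in your treatment of (1) and (2): you never verify that $\eta$ is \emph{totally odd}, and the theorems of \cite{BGGT} you invoke require it. Theorem C and Theorem 4.5.1 of \cite{BGGT} are statements about totally odd, (essentially self-dual) polarized representations over totally real or CM fields; the observation that a $2$-dimensional representation is symplectically essentially self-dual, $\eta\cong\eta^\vee\otimes\det\eta$, supplies a polarization but not the sign condition $\det\eta(c)=-1$ at complex conjugations. Even representations are symplectically polarized in exactly the same way, and no potential automorphy theorem covers them (they should correspond to Maass forms), so as written your appeal to \cite{BGGT} is not justified. This is precisely where the paper's proof does extra work: hypotheses (b), (c), (d) --- crystallinity with Hodge--Tate gap $0<b_\tau<(\ell-1)/2$, irreducibility of $\sym^2\bar\eta|_{G_{F(\zeta_\ell)}}$, and $\ell>7$ --- are fed into \cite[Prop.~2.5]{Cal11} to \emph{prove} that $\eta$ is totally odd, and only then (together with Lemma 1.4.3(iii) of \cite{BGGT}, which converts oddness into an odd polarized pair) is Theorem C applied. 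Your derivation of oddness in part (3) cannot patch this, since it deduces oddness from the conclusion of (1); once the Calegari--Gee step is inserted, oddness is available unconditionally and your argument for (3) becomes sound.

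Apart from this, your route tracks the paper's: (1) is potential diagonalizability plus Theorem C of \cite{BGGT}; for (2) you correctly identify the key point that one must produce a single $F'$ for all $\eta_i$ simultaneously (the paper stresses that (2) is not formal from (1), since automorphy is not known to ascend along arbitrary totally real extensions), and it implements the simultaneous statement by passing to an auxiliary imaginary quadratic extension and using Theorem 4.5.1 and Lemma 2.2.2 of \cite{BGGT}, which is the same mechanism you describe via the Moret-Bailly step. For (3) the paper combines Serre's conjecture (\cite{KW09}, \cite{Ki9}) with the lifting theorem of \cite{DFG} rather than Kisin's Fontaine--Mazur theorem; under hypotheses (b)--(d) either lifting input works, so this difference is inessential.
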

\begin{proof} The assumption (c) implies the irreducibility of $\bar\eta|_{G_{F(\zeta_\ell)}}$.
Together with the assumption (d) we conclude that $\eta$ is totally odd from \cite[Prop.2.5]{Cal11}. { The
assumption (b) implies that $\eta|_{G_{F_v}}$ is potentially diagonalizable so that a required condition to apply  Theorem C of \cite{BGGT} is satisfied.  Together with Lemma 1.4.3 (iii) in \cite{BGGT}, we see that} the assertion (1) is the special case of Theorem C in \cite{BGGT} for $n=2$. 

For (2), we first warn the reader that this is not a  (formal) consequence of (1),  because it is not known that the automorphy of $\eta|_{G_{F'}}$ would imply the automorphy of $\eta|_{G_{F''}}$ for \emph{any} finite totally real extension $F''/F'$. To prove (2), we use \cite[Thm. 4.5.1]{BGGT} and the idea of the proof of Corollary 4.5.2 \emph{loc. cit.} Pick a totally imaginary quadratic extension $M/ F$ which is linearly disjoint from $K(\zeta_\ell)$ over $F$, where $K$ is a finite extension contained in all splitting fields of $\sym ^2 \bar \eta_i$, namely the field fixed by the kernel of $\sym ^2 \bar \eta_i$. As observed above, each  $\eta_i$ is totally odd.
Then $(\eta_i|_{G_M}, \det \eta_i ) $ satisfies the assumption of Theorem 4.5.1 in \cite{BGGT} so that  there exists a finite Galois CM extension $M_1/ M$ such that $(\eta_i|_{G_{M_1}},  \det \eta_i|_{G_{F'}} )$ is automorphic for all $i$, where
$F'$ is the maximal totally real subfield of $M_1$.  Then $\eta_i|_{G_{F'}}$ is also automorphic by Lemma 2.2.2 \emph{loc. cit.} for all $i$.

If $F=\Q$, then the main result in \cite{DFG} together with the input of Serre's conjecture and oddness of $\eta$ implies that $\eta$ is modular.
This proves (3).
\end{proof}

\subsection{Clifford theory in the context of Galois representations}\label{sec:clifford} We end this section by  summarizing some useful results in \cite{Clifford37}  in the context of Galois representations. Let $F$ and $k$ be fields. Denote by $$\pi: \t{GL}_n (k ) \twoheadrightarrow \t{PGL}_n (k)$$ the natural projection.  Two Galois representations $ \tau, \tau' : G_F \to \t{GL}_n (k)$ are said to be \emph{projectively  equivalent} if there exists an invertible matrix $A \in \t{GL}_n (k)$ such that $$\pi \circ \tau = \pi \circ (A \tau' A^{-1}).$$
This is equivalent to the existence of a character $\chi: G_F \to k ^\times$ so that $\tau \simeq \chi \otimes \tau'$. If $\chi$ is a character of finite image, then $\tau$ and $\tau'$ are called \emph{finitely projectively equivalent.}

The next useful  Theorem follows from \cite{Clifford37} and Tate's result on the vanishing of Galois cohomology \cite{Tate-62-ICM} or \cite[\S6.5]{Tate}.

 Let $k$ be {an algebraically closed field} and $\rho : G_F \to {\rm GL}_n(k)$ be an irreducible representation.  Given a  finite Galois extension  $L/F$, decompose the restriction of $\rho$ to $G_L$ into a direct sum of irreducible representations of $G_L$:
$$ \rho|_{G_L} \simeq \sigma _1 \oplus \cdots \oplus \sigma _m. $$
Write $\sigma$ for $\sigma_{ 1}$ and set $H := \{g \in G_F | \sigma^g \simeq \sigma \}.$ Denote by $M : = \overline \Q^H$ the fixed field of $H$.  Note that $H$ contains $G_L$ and $M$ is a subfield of $L$ containing $F$.
\begin{theorem}\label{thm:Clifford}
Under the above setting, the following statements hold:
\begin{enumerate}
\item For each $i = 1, \dots , m$ there exists an element $g(i) \in \gal(L/F)$ such that $\sigma _i \simeq \sigma^{g(i)}$. Consequently the $\sigma_i$'s have the same dimension and $[M:F]$ is equal to the number of non-isomorphic  $\sigma_i$'s.
\item There exist representations $\eta: G_M \to  {\rm GL}_r(k)$ and $\gamma: G_M \to {\rm GL}_s (k)$ such that
\begin{itemize}
\item [(2a)] $\eta|_{G_L}$ is finitely  projectively equivalent to $\sigma$, and $\gamma$ has finite image such that  $\gamma |_{G_L}$ is finitely projectively equivalent to  $s$ copies of the trivial
representation of $G_L$.
\item [(2b)] $\rho \simeq \Ind_{G_M}^{G_F} (\gamma \otimes \eta )$.
\end{itemize}
\end{enumerate}

\end{theorem}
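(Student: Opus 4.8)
The plan is to work entirely over the algebraically closed field $k$ and combine the classical Clifford-theory decomposition with a cohomological splitting argument of Tate. First I would establish part (1). Since $L/F$ is Galois and $\rho$ is irreducible, the group $\gal(L/F)$ acts on the set of isomorphism classes of irreducible constituents of $\rho|_{G_L}$: for $g \in G_F$ lifting $\bar g \in \gal(L/F)$, conjugation sends $\sigma_i$ to $\sigma_i^g$, which is again an irreducible constituent because $\rho^g \simeq \rho$ (as $\rho$ is a representation of $G_F$). Irreducibility of $\rho$ forces this action to be transitive on $\{\sigma_1,\dots,\sigma_m\}$ — otherwise the sum of constituents in a proper orbit would span a proper nonzero $G_F$-stable subspace, by Frobenius reciprocity / Mackey. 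Transitivity gives the elements $g(i)$ with $\sigma_i \simeq \sigma^{g(i)}$, hence all $\sigma_i$ have the same dimension; and the stabilizer of the class of $\sigma$ is exactly $H$, so the orbit has size $[G_F : H] = [M:F]$, which therefore equals the number of distinct isomorphism classes among the $\sigma_i$.

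Next, part (2). The standard Clifford-theoretic statement is that $\rho \simeq \Ind_{G_M}^{G_F}(\tilde\rho)$ where $\tilde\rho$ is the $\sigma$-isotypic part of $\rho|_{G_M}$; by construction $\tilde\rho|_{G_L}$ is $\sigma$-isotypic, say isomorphic to $\sigma^{\oplus t}$ for some $t$, so $\dim \tilde\rho = t \dim\sigma$ and $\tilde\rho$ is irreducible as a $G_M$-representation. The heart of the matter is to split $\tilde\rho$, up to a finite-order twist, as $\gamma \otimes \eta$ with $\eta|_{G_L}$ projectively $\sigma$ and $\gamma$ of finite image with $\gamma|_{G_L}$ projectively trivial. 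To do this I would pass to projective representations: the composite $\bar\sigma : G_L \to \mathrm{PGL}_{\dim\sigma}(k)$ extends to a projective representation $\bar\eta: G_M \to \mathrm{PGL}_{\dim\sigma}(k)$ because $H$ stabilizes the class of $\sigma$ (this is where the definition of $H$ is used, together with Schur's lemma over the algebraically closed field $k$). The obstruction to lifting $\bar\eta$ to an honest representation $\eta: G_M \to \mathrm{GL}_{\dim\sigma}(k)$ lies in $H^2(G_M, k^\times)$; Tate's theorem \cite{Tate-62-ICM}, \cite[\S6.5]{Tate} on the vanishing of $H^2(G_F, \overline\Q^\times)$ for number fields $F$ (equivalently the triviality of the relevant Brauer-group classes) kills this obstruction, so $\eta$ exists with $\eta|_{G_L}$ projectively equal to $\sigma$. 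The discrepancy between $\eta|_{G_L}$ and $\sigma$ is by definition a character $\chi: G_L \to k^\times$ with $\sigma \simeq \chi \otimes \eta|_{G_L}$; comparing determinants of $\sigma$ and $\eta|_{G_L}$ shows $\chi^{\dim\sigma}$ is the ratio of two fixed characters, hence $\chi$ has finite order after a further harmless adjustment, giving finite projective equivalence. Finally one sets $\gamma$ to be the representation of $G_M$ on $\mathrm{Hom}_{k[G_L]}(\eta|_{G_L}, \tilde\rho|_{G_L}) \cong k^t$ (the multiplicity space), on which $G_M$ acts through $\tilde\rho \otimes \eta^\vee$; this is $s$-dimensional with $s = t$, has $\gamma|_{G_L}$ acting by the scalar $\chi^{-1}$ (projectively trivial, of finite order), and the canonical evaluation map yields the $G_M$-isomorphism $\tilde\rho \simeq \gamma \otimes \eta$, whence $\rho \simeq \Ind_{G_M}^{G_F}(\gamma\otimes\eta)$. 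One checks $\gamma$ has finite image because its restriction to the finite-index subgroup $G_L$ is scalar of finite order and $G_M/G_L$ is finite.

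The main obstacle is the lifting step: producing the genuine (not merely projective) representation $\eta$ of $G_M$ extending the projective representation $\bar\sigma$ of $G_L$. This is precisely the point where continuity/profinite subtleties and the vanishing of the Brauer-type cohomology group $H^2(G_M, k^\times)$ (Tate's result) are essential — without it one only gets $\eta$ valued in $\mathrm{GL}$ of a finite central extension, not in $\mathrm{GL}_n(k)$ itself. The rest (transitivity of the Galois action in part (1), the bookkeeping with the multiplicity space $\gamma$, and checking finiteness of the twisting character) is routine once the lifting is in hand. I would be careful to record that all these constructions can be made continuous, since $\rho$ and $\sigma$ are continuous Galois representations; this follows from the fact that the relevant cocycles take values in the discrete group $k^\times$ of roots of unity appearing, so continuity is automatic on a finite-index subgroup.
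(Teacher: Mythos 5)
Your part (1) and your overall strategy (Clifford decomposition of the isotypic piece plus Tate's vanishing of $H^2(G_M,k^\times)$ to lift projective representations) are the same as the paper's, and extending $\bar\sigma$ to a projective representation of $G_M$ via Schur's lemma and then lifting it is legitimate. However, two steps in your part (2) do not hold as written. First, the multiplicity space: $\mathrm{Hom}_{k[G_L]}(\eta|_{G_L},\tilde\rho|_{G_L})$ is \emph{zero} unless $\eta|_{G_L}\simeq\sigma$ on the nose, because $\tilde\rho|_{G_L}\simeq\sigma^{\oplus s}$ while $\eta|_{G_L}$ is the non-isomorphic irreducible $\chi^{-1}\otimes\sigma$ whenever the twisting character $\chi$ is nontrivial; moreover, on the space of genuinely $G_L$-equivariant maps the group $G_L$ acts trivially, not by $\chi^{-1}$, so your description is internally inconsistent. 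One can repair this by taking maps that intertwine up to the twist $\chi$, but then $G_M$-stability of that space requires $\chi$ to be invariant under conjugation by $G_M$, which you have not shown.

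Second, and more seriously, the finiteness requirements in (2a) are not established. An arbitrary Tate lift $\eta$ of the projective extension of $\bar\sigma$ only satisfies $\eta|_{G_L}\simeq\chi^{-1}\otimes\sigma$ for \emph{some} character $\chi$ of $G_L$ over which you have no control; your determinant comparison gives only that $\chi^{\dim\sigma}$ equals $\det\sigma\cdot(\det\eta|_{G_L})^{-1}$, a character with no reason to have finite order (it can involve a nontrivial power of the cyclotomic character), and the proposed ``harmless adjustment'' would require extending the infinite-order part of $\chi$ to a character of $G_M$, which is exactly what is not justified. Since your finiteness of the image of $\gamma$ is deduced from the finite order of $\chi$, both finiteness assertions of (2a) remain open in your argument. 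The paper avoids this by the opposite order of operations: from Clifford's tensor decomposition $W\simeq\tilde\gamma\otimes\tilde\eta$ of the isotypic subspace $W$, it first lifts $\tilde\gamma$ --- a projective representation of the \emph{finite} group $\gal(L/M)$ --- to a genuine representation $\gamma$ of $G_M$ with finite image (this is where Tate's theorem is used), and only then obtains $\eta$ as the complementary lift with $W\simeq\gamma\otimes\eta$; the statements in (2a), including the finiteness of the projective equivalences, then follow automatically because $\gamma|_{G_L}$ is a finite-order scalar character. To salvage your route you would need either to adopt that order, or to prove that the lift $\eta$ can be corrected by a character of $G_M$ so that the resulting $\chi$ has finite order.
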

 For the sake of self-containedness, we sketch a (slightly different) proof here.  Let $V$ denote the underlying $k$-space of $\rho$. Define $W$ to be the subspace of $V$ spanned by $\sigma ^g  $ for all $g \in H$. Set $H':= \{g \in G_F| g (W) = W\}$. Obviously, $W$ is a representation of $H'$. Theorem 2 in \cite{Clifford37} shows that $V = \Ind^{G_F}_{H'}W$. Now we prove that $H'= H = G_M$. Obviously, $H \subset H'$ by definition. For any $g \in H'$, $\sigma ^g (W) \subset W$ by definition. But $W$ is a direct sum of irreducible representations isomorphic to  $\sigma$. So $\sigma ^g \simeq \sigma$ and hence $g \in H$. Therefore $V = \Ind^{G_F}_{G_M}W$.  Since $W|_{G_L}$ is $r$-isotypic by the construction of $W$, Theorem 3 in \cite{Clifford37} shows that there exist projective representations $\tilde \gamma: G_M \to \t{PGL}_s (k)$   of $G_M/G_L$,
 and $\tilde \eta  : G_M \to \t{PGL}_r (k)$ so that $W \simeq \tilde \gamma \otimes \tilde \eta$, where $\tilde \gamma |_{G_L}$ is projectively equivalent to $s$ copies of the trivial representation of $G_L$
and
$\tilde \eta |_{G_L}$ is projectively equivalent to $\sigma$. By Tate's result on vanishing of Galois cohomology, $\tilde \gamma$ admits a lifting $ \gamma : G_M \to \t{GL}_s (k)$ with finite image. Hence $\tilde  \eta $ also has a lifting $\eta : G_M \to \t{GL}_r (k)$ such that $W \simeq \gamma \otimes \eta$. This proves the theorem.

 \section{Galois representations arising from noncongruence cusp forms and their $2$-dimensional subrepresentations} \label{sec: Scholl}
 \subsection{Scholl representations associated to noncongruence cusp forms} Let $\G \subset \t{SL}_2(\Z)$ be a \emph{noncongruence} subgroup, that is, $\G $ is a finite index subgroup of $ \t{SL}_2(\Z)$ not containing any principal congruence subgroup $\G(N)$. For any integer $\kappa \ge 2$,   the space $S_\k(\G)$ of weight $\k$ cusp forms for $\G$ is finite-dimensional;  denote by $d = d(\G, \k)$ its dimension. Assume that the compactified modular curve $\G \backslash\mathfrak H^*$ (by adding cusps) is defined over $\Q$  and the cusp at infinity is $\Q$-rational.  For even $\k \ge 4$ and any prime $\ell$, in \cite{sch85b}  Scholl constructed an $\ell$-adic Galois representation $\rho_\ell: G_\Q \rightarrow \t{GL}_{2d}(\Q_\ell)$ attached to $S_\k(\G)$. It turns out that there exist a  finite set $S$ of primes, polynomials $Q_p(X) \in \mathbb Z[X]$ for $p \notin S$, and a finite set $\tv$ so that  $\{ \Q, S , \{Q_p (X) \},  \{\rho_\ell\}, \tv \}$ form a \emph{strongly  compatible system}\footnote{It is unclear that $\rho_\ell$ is always semi-simple from Scholl's construction. So in the following, we always replace $\rho_\ell$ by its semi-simplification if needed.}. Here $\tv = \HT(\rho_\ell)= \{0, \dots , 0, 1-\k , \dots , 1-\k\}$ consists of $1-\k$ and $0$, each with multiplicity $d$, and is independent of $\ell$.
 Scholl also showed that all the roots of $Q_p(X)$
have the same complex absolute value $p^{(\k-1)/2}$ (cf. \S5.3 in \cite{sch85b}).
These results of
Scholl can be extended to odd weights under some
extra hypotheses (e.g., $\pm(\G \cap \G(N))= \pm (\G) \cap \pm
(\G(N))$ for some $N\ge 3$, where $\pm : \t{SL}_2 (\Z) \to \t{PSL}_2(\Z)$ is the
projection). The reader is referred to the end of \cite{sch85b}
for more details. In this paper we
assume that $\rho_\ell$ exists.

 Let $V_\ell$ denote the underlying $\Q_\ell$-space of $\rho_\ell$. There exists a perfect, Galois action compatible pairing
 $$V_\ell {\times} V_\ell \to \Q_\ell  (-\k +1)$$which is alternating (resp. symmetric) when $\k$ is even (resp. odd).
In particular, we have $\rho ^\vee_\ell \simeq \epsilon_\ell ^{\k-1} \rho _\ell$ for the dual representation  $\rho ^\vee_\ell$ of $\rho_\ell$,  where $\epsilon_\ell $ denotes the $\ell$-adic cyclotomic character.

For the remainder of the paper, we reserve $\rho_\ell$ for the $\ell$-adic Galois representation associated to a noncongruence subgroup and call it a \emph{Scholl representation} if no confusion arises.  As explained in the Introduction, Scholl representations are expected to correspond to certain automorphic forms. But since they are irregular when $d>1$, the currently known (potential) automorphy lifting theorem can not be applied directly. In this section, we will show that Scholl representations potentially of $\GL_2$-type are (potentially) automorphic. See Theorem \ref{thm-try} for the precise statement.

A (general) representation $\sigma$ of a Galois group $G_F$  is said to be \emph{potentially reducible} if there exists a finite index subgroup $H$ of $G_F$ such that $\sigma|_H$ is reducible; otherwise, it is called \emph{strongly irreducible}. Recall that  a $2$-dimensional representation $\sigma$ of the Galois group of a totally real field $F$ is \emph{totally odd} if $\det \sigma (c) = -1$ for any complex conjugation $c \in G_F$.

 The  goal of \S \ref{sec: Scholl} is to prove

\begin{theorem}\label{thm-try} Let $F$ be a totally real field and $\eta_\ell : G_F \to \GL _2 (\Qlbar)$ be a system of 2-dimensional $\ell$-adic Galois representations. Suppose there exist a finite extension $K$ of $F$ and a compatible system of Scholl representations $\{\rho_\ell\}$ such that for each $\ell$,  $ \eta_\ell | _{G_K}$ is a  subrepresentation of $\rho_\ell|_{G_K}$.  Then there exists a set $\CL$ of {rational} primes of Dirichlet density 1 so that for each $\ell \in \CL$, $\eta_\ell$ is totally odd
and potentially automorphic. Moreover, $\eta_\ell$ is automorphic if $F=\Q$ or $\eta_\ell$ is potentially reducible.
\end{theorem}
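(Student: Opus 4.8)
The plan is to reduce the statement to Theorem \ref{thm-known} by carefully choosing the set $\CL$ of primes $\ell$, and then checking its hypotheses (a)–(d) for $\eta_\ell$. The key inputs are: the compatible system $\{\rho_\ell\}$ is strongly compatible with Hodge–Tate weights $\{0,\dots,0,1-\kappa,\dots,1-\kappa\}$, and $\eta_\ell|_{G_K}$ sits inside $\rho_\ell|_{G_K}$. First I would handle the Hodge–Tate condition: since $\eta_\ell|_{G_K}\subset\rho_\ell|_{G_K}$ and $\rho_\ell$ is crystalline at $\ell\notin S$ with the two-valued weight set, the $\tau$-Hodge–Tate weights of $\eta_\ell$ at a place above $\ell$ form a size-$2$ sub-multiset of $\{0,1-\kappa\}$ for each embedding $\tau$; hence $\HT_\tau(\eta_\ell)$ is one of $\{0,0\},\{0,1-\kappa\},\{1-\kappa,1-\kappa\}$. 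To get hypothesis (b) — namely $\HT_\tau(\eta_\ell)=\{a_\tau,a_\tau+b_\tau\}$ with $0<b_\tau<(\ell-1)/2$ — I restrict to primes $\ell$ with $\ell-1>2(\kappa-1)$, which is all but finitely many primes; the case $\{0,0\}$ or $\{1-\kappa,1-\kappa\}$ (i.e. $b_\tau=0$) is exactly the situation where $\eta_\ell$ will turn out to be potentially reducible, and I treat it separately below. For $F$ unramified at $\ell$ and $\eta_\ell|_{G_{F_v}}$ crystalline one excludes the finitely many $\ell$ ramified in $F$ or lying below $S$; this is where the Dirichlet density-$1$ set $\CL$ first enters, and it can be further shrunk finitely many times in what follows.

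Next, the main obstacle: hypothesis (c), the irreducibility of $\sym^2\bar\eta_\ell|_{G_{F(\zeta_\ell)}}$, together with (a), the (strong) irreducibility of $\eta_\ell$ itself. I expect this to be the hard part, and the reference to "recent developments" and the strategy outlined in the introduction (choose $\CL$ via \cite{BGGT}, then study the reduction $\bar\eta_\ell$ and its symmetric square) confirms it. The approach is: distinguish whether $\eta_\ell$ is potentially reducible or strongly irreducible. If $\eta_\ell$ is potentially reducible, then after restriction it becomes a sum of characters, and these characters — being subquotients of a Scholl representation over a finite extension — are geometric with small Hodge–Tate weights, so by results on potential automorphy of Galois characters over totally real (CM) fields, or by a direct argument reducing to Hecke characters, $\eta_\ell$ is potentially automorphic; and when $F=\Q$ this gives automorphy via class field theory. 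This covers the "$\eta_\ell$ is potentially reducible" clause and in particular the weight cases $\{0,0\},\{1-\kappa,1-\kappa\}$ excluded above. If $\eta_\ell$ is strongly irreducible, I use Clifford theory (Theorem \ref{thm:Clifford}) and a dimension/ramification bound argument on the finitely many "bad" behaviours of $\bar\eta_\ell$: for $\ell$ outside a further density-zero (in fact finite, by a compatible-system argument bounding the image) set, $\bar\eta_\ell|_{G_{F(\zeta_\ell)}}$ is irreducible and not dihedral/exceptional, whence $\sym^2\bar\eta_\ell|_{G_{F(\zeta_\ell)}}$ is irreducible — this is the classical analysis of images of mod-$\ell$ representations (large image for all but finitely many $\ell$). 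Hypothesis (d), $\ell>7$, just removes finitely many more primes.

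Having arranged (a)–(d) for all $\ell$ in the density-$1$ set $\CL$, I conclude: Theorem \ref{thm-known}(1) gives a finite totally real Galois extension $F'/F$ with $\eta_\ell|_{G_{F'}}$ automorphic, i.e. $\eta_\ell$ is potentially automorphic; if $F=\Q$, Theorem \ref{thm-known}(3) gives automorphy directly. Finally, total oddness: hypothesis (c) forces $\bar\eta_\ell|_{G_{F(\zeta_\ell)}}$ irreducible, and then \cite[Prop.~2.5]{Cal11} together with $\ell>7$ (hypothesis (d)) yields $\det\eta_\ell(c)=-1$ for every complex conjugation $c\in G_F$ — exactly as in the first paragraph of the proof of Theorem \ref{thm-known}. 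In the potentially reducible case the oddness instead follows from the explicit description of $\eta_\ell$ as (potentially) a sum of characters coming from a Scholl representation, whose determinant is a power of the cyclotomic character times a finite-order character, evaluated at $c$. Assembling the strongly irreducible and potentially reducible cases, and noting that the finitely many excluded primes do not affect Dirichlet density, completes the proof.
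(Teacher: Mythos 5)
Your overall skeleton (reduce to Theorem \ref{thm-known}, split into potentially reducible versus strongly irreducible, produce a density-$1$ set $\CL$) matches the paper, but the two hardest steps are not actually carried out, and the substitutes you propose would fail. First, the Hodge--Tate/determinant analysis: you assert that the degenerate weight multisets $\{0,0\}$ and $\{1-\kappa,1-\kappa\}$ are "exactly the situation where $\eta_\ell$ will turn out to be potentially reducible" and defer them to that case. This is both unproved and false in the paper's setting: the paper's Proposition \ref{prop-purity} uses Scholl's purity (all Frobenius eigenvalues of $\rho_\ell$ have absolute value $q^{(\kappa-1)/2}$) to show that $\det\eta_\ell=\epsilon_\ell^{1-\kappa}\chi$ with $\chi$ finite, that $\HT_\tau(\eta_\ell)=\{0,1-\kappa\}$ for \emph{every} $\tau$ (the degenerate cases never occur, reducible or not), and that $\eta_\ell$ is always irreducible. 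Without this purity input your hypothesis (b) is not verified, and your treatment of the potentially reducible case is too weak for the statement: the theorem asserts genuine automorphy over $F$ (and total oddness) whenever $\eta_\ell$ is potentially reducible, not just potential automorphy plus automorphy for $F=\Q$. The paper gets this by combining irreducibility (from purity) with Clifford theory to write $\eta_\ell=\Ind^{G_F}_{G_M}\chi_1$ for a quadratic $M/F$, showing $M$ must be CM (a geometric character of a totally real field is a cyclotomic power times a finite character, which would contradict the two distinct Hodge--Tate weights), and then invoking automorphic induction; oddness comes from the CM-dihedral structure. Your suggested oddness computation, $\det\eta_\ell(c)=$ (cyclotomic power)$\times$(finite character) at $c$, does not yield $-1$.

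Second, and most seriously, your verification of hypothesis (c) -- irreducibility of $\sym^2\bar\eta_\ell|_{G_{F(\zeta_\ell)}}$ -- rests on "the classical analysis of images of mod-$\ell$ representations (large image for all but finitely many $\ell$, by a compatible-system argument bounding the image)". But the $\eta_\ell$ are \emph{not} assumed to form a compatible system; only $\{\rho_\ell\}$ is compatible. There is no classical finite-exclusion or large-image theorem available for an arbitrary system of $2$-dimensional representations varying with $\ell$, and indeed the fact that the paper only obtains a density-$1$ set $\CL$ (rather than "all but finitely many $\ell$", as you claim) reflects exactly this obstruction. The paper's route is quite different: apply Proposition 5.2.2 of \cite{BGGT} to the compatible system $\{\rho_\ell\otimes\rho_\ell\}$ restricted to $G_K$ to get the density-$1$ set and an integral model of $G_\ell^{\rm sc}$; show (Lemma \ref{lem-shapeofH}) that when $\eta_\ell$ is strongly irreducible the Zariski closure of its image is all of $\GL_2$, so $G_\ell^{\rm sc}$ acts on $\sym^2\eta_\ell$ irreducibly; deduce residual irreducibility over $G_K$ via Proposition 5.3.2 of \cite{BGGT}; and then run a separate Fontaine--Laffaille/fundamental-character argument (using $\ell-1>6\kappa$ and Clifford theory, distinguishing the cases $[M:K]=1,3$) to rule out reducibility after the further restriction to $G_{K(\zeta_\ell)}$. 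None of this is replaceable by the argument you sketch, so the central step of the proof is missing.
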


The proof will be divided into two parts according as $\eta_\ell$  is potentially reducible or strongly irreducible.

\subsection{Potentially reducible $\eta_\ell$ in Theorem \ref{thm-try}}

We begin by exploring general properties of $\eta_\ell$ in Theorem \ref{thm-try}, including its determinant, irreducibility, and Hodge Tate weights.

\begin{prop}\label{prop-purity} Let $F$ be a totally real field and $\eta_\ell: G_F \to \GL_2 (\overline \Q_\ell)$ be a Galois representation.  Suppose that
there exists a finite extension $K/F$  so that $\eta_\ell |_{ G_K}$ is isomorphic to a subrepresentation of $ \rho_\ell |_{G_K}$ for  a Scholl representation $\rho_\ell$  of $G_\Q$ associated to a space of cusp forms of weight $\k\ge 2$. Then $\eta_\ell$ is {(absolutely)} irreducible, $\det \eta_\ell = \epsilon_\ell^{1-\k} \chi$ for a character $\chi$ of finite order, and $\HT_\tau (\eta_\ell) = \{0 , -\k +1\}$ for all embeddings $\tau : F \to \Qlbar$.
\end{prop}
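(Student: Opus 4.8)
The plan is to play off two features of Scholl representations against the rigidity of de Rham characters of a totally real field: \emph{purity} (every Frobenius eigenvalue is a Weil number of weight $\kappa-1$) and the \emph{split, irregular} Hodge--Tate weights $\{0^{(d)},(1-\kappa)^{(d)}\}$. First I would check that $\eta_\ell$ is geometric: since $\eta_\ell|_{G_K}$ embeds into $\rho_\ell|_{G_K}$ it is unramified outside a finite set, and for each place $w\mid\ell$ of $K$ the representation $\eta_\ell|_{G_{K_w}}$ is a subrepresentation of the de Rham representation $\rho_\ell|_{G_{K_w}}$, hence de Rham; as unramifiedness away from $\ell$ and the de Rham condition descend along the finite extension $K/F$, $\eta_\ell$ is geometric. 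This yields the two inputs I need. \emph{(i) Purity:} by Scholl's bound (\S5.3 of \cite{sch85b}) the roots of $Q_p(X)\in\Z[X]$ are algebraic integers all of whose complex conjugates have absolute value $p^{(\kappa-1)/2}$, so for a place $v\nmid\ell$ of $K$ over a prime outside $S$ the eigenvalues of $\rho_\ell(\Frob_v)$ (powers of those of $\rho_\ell(\Frob_p)$ by the residue degree) are Weil numbers of absolute value $q_v^{(\kappa-1)/2}$ under every embedding into $\C$; since $\eta_\ell|_{G_K}$ is a $G_K$-subrepresentation, the eigenvalues of $\eta_\ell(\Frob_v)$ form a sub-multiset of these. \emph{(ii) Hodge--Tate weights:} for any embedding $\tau\colon F\to\Qlbar$, extending it to $\tilde\tau\colon K\to\Qlbar$ and using that the Hodge--Tate weights of a de Rham representation split as those of a subrepresentation together with those of the quotient, $\HT_\tau(\eta_\ell)=\HT_{\tilde\tau}(\eta_\ell|_{G_K})$ is a size-$2$ sub-multiset of $\HT(\rho_\ell)=\{0^{(d)},(1-\kappa)^{(d)}\}$; hence for each $\tau$ it is one of $\{0,0\}$, $\{0,1-\kappa\}$, $\{1-\kappa,1-\kappa\}$.

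Next I would pin down the determinant. The character $\det\eta_\ell$ is geometric (an exterior power of $\eta_\ell$), and $F$ is totally real, so by Lemma \ref{lem:HT}(3) it has a single Hodge--Tate weight $m$ independent of $\tau$; twisting, $\det\eta_\ell\otimes\epsilon_\ell^{-m}$ is then a de Rham character with all Hodge--Tate weights $0$, hence of finite order by a standard fact of $\ell$-adic Hodge theory (equivalently, the classification of $\ell$-adic Hecke characters; see the appendix of \cite{BGGT}). Thus $\det\eta_\ell=\epsilon_\ell^{\,m}\chi$ with $\chi$ of finite order. Comparing absolute values at Frobenius: on one hand $|\det\eta_\ell(\Frob_v)|=q_v^{-m}$, and on the other hand, by input (i), $\det\eta_\ell(\Frob_v)$ is a product of two Weil numbers of absolute value $q_v^{(\kappa-1)/2}$, hence has absolute value $q_v^{\kappa-1}$. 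Therefore $m=1-\kappa$, which gives assertion (2), and in particular $\HT_\tau(\det\eta_\ell)=1-\kappa$ for every $\tau$.

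Now the remaining two assertions follow quickly. Since the two Hodge--Tate weights of $\eta_\ell$ sum to $\HT_\tau(\det\eta_\ell)=1-\kappa$, and $\kappa\ge 2$ excludes $\{0,0\}$ (sum $0$) and $\{1-\kappa,1-\kappa\}$ (sum $2-2\kappa$), the only surviving option from input (ii) is $\HT_\tau(\eta_\ell)=\{0,1-\kappa\}$ for every $\tau$, which is assertion (3). For irreducibility, suppose $\eta_\ell$ were reducible; being $2$-dimensional it would contain a $1$-dimensional subrepresentation $\psi$, which is again a geometric character of the totally real field $F$, so by the same argument as for the determinant $\psi=\epsilon_\ell^{\,m'}\chi'$ with $\chi'$ of finite order and $m'=\HT_\tau(\psi)\in\HT_\tau(\eta_\ell)=\{0,1-\kappa\}$. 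But input (i) applied to $\psi$ forces $|\psi(\Frob_v)|=q_v^{(\kappa-1)/2}$, i.e.\ $m'=(1-\kappa)/2$, which lies in $\{0,1-\kappa\}$ only if $\kappa=1$ --- a contradiction. Hence $\eta_\ell$ is irreducible, and since $\Qlbar$ is algebraically closed it is absolutely irreducible, which is assertion (1).

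The genuinely external ingredient, used twice, is the rigidity statement: a geometric (de Rham) character of a totally real number field whose Hodge--Tate weights all vanish is of finite order --- equivalently, every de Rham character of such a field is an integral power of $\epsilon_\ell$ times a finite-order character. I expect citing this cleanly (via the appendix of \cite{BGGT}, or the classification of $\ell$-adic Hecke characters together with the fact that over a totally real field every algebraic Hecke character has parallel infinity type) to be the only real subtlety. Everything else is formal: the descent of the de Rham condition, the behaviour of Hodge--Tate weights under restriction to $G_K$ and under passage from $F$- to $K$-embeddings, and the additivity of Hodge--Tate weights for $\det=\wedge^2$. Once the rigidity input is in place, the clash between ``weight $\kappa-1$'' and ``Hodge--Tate weights contained in $\{0,1-\kappa\}$'' does all the work.
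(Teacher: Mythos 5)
Your proof is correct and takes essentially the same route as the paper's: constrain the possible $\tau$-Hodge--Tate weights via the subrepresentation property, use Lemma \ref{lem:HT}(3) and the rigidity of geometric characters of a totally real field to write $\det\eta_\ell=\epsilon_\ell^{m}\chi$ with $\chi$ of finite order, pin down $m=1-\kappa$ by Scholl's purity, and rule out reducibility by the same purity-versus-character-weight clash. The only cosmetic differences are that the paper proves the finite-order statement for the weight-zero character by hand (finite inertia images plus class field theory) where you cite it, and its irreducibility contradiction compares two Frobenius eigenvalues of absolute values $1$ and $q^{\kappa-1}$ rather than your half-integral weight $(1-\kappa)/2$.
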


\begin{proof}
We first prove the statement on $\det \eta_\ell$ and Hodge-Tate weights.  Let $\tau$ be an embedding of $F$ into $\Qlbar$. Note that $\HT_\tau(\eta_\ell) = \HT_{\tau'}(\eta_\ell |_{G_K})$ for any embedding $\tau' : K \to \Qlbar$ extending $\tau$.
Since $\eta_\ell|_{G_K}$ is a subrepresentation of $\rho_\ell|_{G_K }$, we see that  $\HT_\tau( \eta_\ell)$ only has 3 possibilities: $ \{0, 0\}$, $\{-\k+1, -\k+1\}$, or $\{0, -\k+1\}$.
Then the  determinant of $\eta_\ell$ has $\HT_\tau(\det \eta_\ell) = \{r\}$ with $r = 0$ or $2-2\k$ or $1-\k$,  equal to the sum of the two weights of $\eta_\ell$ accordingly. Since $F$ is totally real, by Lemma \ref{lem:HT}, $\HT_\tau (\det \eta_\ell)$ is the same for all $\tau$. Hence
$\det \eta_\ell = \epsilon_\ell ^{r} \chi$ for some character $\chi$ with Hodge-Tate weight $0$. Since $\chi$ has Hodge-Tate weights $0 $ for all primes $\gp| \ell$, we have that $\chi (I_\gp)$,  the image of the inertia group at $\gp$,  is a finite group, by Proposition 3.56 of \cite{FO}. For any finite prime $\gp \nmid \ell$, $\chi (I_\gp)$ is also a finite group. This is because the wild ramification group must have a finite image, and the same holds for the tame ramification group, resulting from the relation between a Frobenius element and a generator of the tame ramification group and the fact that $\chi$ is a character. As $\chi$ is ramified at finitely many places, we conclude the existence of a positive integer $n$ such that $\chi^n$ is a character unramified at \emph{all} places. By class field theory $\chi^n$ has finite order, therefore so has $\chi$.

As a subrepresentation of $\rho_\ell |_{G_K}$, at almost all finite places $\gp$ of $K$, the roots of the characteristic polynomial of  $\eta_\ell(\Frob_\gp)$ have the same complex absolute value $q^{\frac{\k-1}{2}}$, where $q$ is the cardinality of the residue field of $\gp$. Hence the complex absolute value of $\det \eta_\ell (\Frob _\gp)$ is $q^{\k-1}$. On the other hand, the complex absolute value of $\epsilon_\ell (\Frob_\gp)$ is $q^{-1}$ and that of $\chi (\Frob _\gp)$ is $1$ since $\chi$ has finite order so that $\det \eta_\ell= \epsilon_\ell ^{r} \chi$ at $\Frob_{\gp}$ has complex absolute value $q^{-r}$. This proves that $r = 1-\k$ and $\HT_\tau (\eta_\ell) = \{0 , -\k+1\}$ for all $\tau$.

Now suppose that  $\eta_\ell$ is reducible. Then the semi-simplification of $\eta_\ell $ is $\zeta \oplus \zeta'$ for some $1$-dimensional $\lambda$-adic  representations $\zeta$ and $\zeta'$ of $G_F$.  By Lemma \ref{lem:HT} again  and replacing $\zeta$ by $\zeta'$ if necessary, we may assume that $\HT_\tau (\zeta) = 0$ for all  $\tau$ and  $\HT_{\tau} (\zeta ') = 1-\k$  for all $\tau $. The above argument  shows that $\zeta = \chi'$ and $\zeta'=  \epsilon_\ell ^{1-\k} \chi'' $ for some finite order characters $\chi'$ and $\chi'' $. The characteristic polynomial  of $\eta_\ell(\Frob_\gp)$ at an unramified finite place $\gp$ of $K$ is $(X- \chi '(\Frob_\gp)) (X- \epsilon_\ell^{1-\k} (\Frob_\gp) \chi'' (\Frob_\gp))$  with two roots of different complex absolute values, contradicting Scholl's result.   So $\eta_\ell$  must be irreducible.
\end{proof}

Next we show that if $\eta_\ell$ in Theorem \ref{thm-try} is potentially reducible, then it is totally odd and automorphic.

\begin{prop}\label{prop-auto}Suppose that $\eta_\ell$ in Proposition \ref{prop-purity} is potentially reducible. Then $\eta_\ell$ is totally odd, and there is a quadratic CM extension field $M$ of $F$ and a character $\chi_1$ of $G_M$ such that $\eta _\ell= \Ind^{G_F}_{G_M}\chi_1$. Consequently $\eta_\ell$ is automorphic.
\end{prop}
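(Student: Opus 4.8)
The plan is to combine the constraints already extracted in Proposition~\ref{prop-purity} --- that $\eta_\ell$ is absolutely irreducible, $\det\eta_\ell=\epsilon_\ell^{1-\kappa}\chi$ with $\chi$ of finite order, and $\HT_\tau(\eta_\ell)=\{0,1-\kappa\}$ for every embedding $\tau:F\to\Qlbar$ (a \emph{regular} pair, as $\kappa\ge 2$) --- with Clifford theory and the theory of Hodge--Tate weights of geometric characters. Potential reducibility will force $\eta_\ell$ to be dihedral, and regularity of its weights will force the associated quadratic field to be CM; oddness and automorphy then follow quickly.

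\emph{Step 1: $\eta_\ell$ is induced from a character of a quadratic extension.} Since $\eta_\ell$ is potentially reducible, after replacing the relevant finite-index subgroup by its normal core we may pick a finite Galois extension $L/F$ with $\eta_\ell|_{G_L}$ reducible; as $G_L\trianglelefteq G_F$ has finite index, $\eta_\ell|_{G_L}$ is semisimple by Clifford's theorem, hence $\eta_\ell|_{G_L}\cong\psi_1\oplus\psi_2$ for characters $\psi_i$ of $G_L$. Apply Theorem~\ref{thm:Clifford} with $\sigma=\psi_1$: the number of non-isomorphic $\psi_i$ equals $[M:F]\in\{1,2\}$ and $\eta_\ell\cong\Ind_{G_M}^{G_F}(\gamma\otimes\eta)$. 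If $[M:F]=1$ then $\psi_1\cong\psi_2$, so $\eta_\ell|_{G_L}\cong\psi_1\oplus\psi_1$ is scalar, and then $\HT_{\tilde\tau}(\eta_\ell|_{G_L})$ would be a repeated integer, contradicting $\HT_\tau(\eta_\ell)=\{0,1-\kappa\}$. Hence $[M:F]=2$, $\gamma\otimes\eta$ is a single character $\chi_1$ of $G_M$, and $\eta_\ell\cong\Ind_{G_M}^{G_F}\chi_1$ with $M/F$ quadratic.

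\emph{Step 2: $M$ is a CM field.} By Mackey, $\eta_\ell|_{G_M}\cong\chi_1\oplus\chi_1^{s}$ with $s$ generating $\gal(M/F)$, and $\chi_1$ is a geometric character of $G_M$, being a direct summand of the geometric representation $\eta_\ell|_{G_M}$. For $\tau:F\to\Qlbar$ with its two extensions $\tilde\tau_1,\tilde\tau_2$ to $M$, Lemma~\ref{lem:HT}(1) together with the conjugation identity of Lemma~\ref{lem:HT}(2) (whose proof applies verbatim to the Galois extension $M/F$) gives $\{\HT_{\tilde\tau_1}(\chi_1),\HT_{\tilde\tau_2}(\chi_1)\}=\HT_\tau(\eta_\ell)=\{0,1-\kappa\}$; in particular $\HT(\chi_1)$ takes both values $0$ and $1-\kappa$. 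On the other hand, by the structure of geometric $\GL_1$-representations --- the discussion preceding Lemma~A.2.1 of \cite{BGGT}, already invoked for Lemma~\ref{lem:HT}(3) --- the assignment $\tilde\tau\mapsto\HT_{\tilde\tau}(\chi_1)$ is of CM type: $\HT_{\tilde\tau}(\chi_1)+\HT_{\overline{\tilde\tau}}(\chi_1)=N$ for a fixed integer $N$, where $\overline{\tilde\tau}$ is the complex conjugate embedding with respect to a fixed $\iota:\Qlbar\to\C$. If $M$ were not totally imaginary, there would be an embedding $\tilde\tau_0$ with $\overline{\tilde\tau_0}=\tilde\tau_0$, forcing $\HT_{\tilde\tau_0}(\chi_1)=N/2$; then in $N=2(N/2)$ both summands lie in $\{0,1-\kappa\}$, which forces $\HT_{\tilde\tau}(\chi_1)=N/2$ for every $\tilde\tau$ --- contradicting that $\HT(\chi_1)$ takes both values $0$ and $1-\kappa$. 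Hence $M$ is totally imaginary, and being quadratic over the totally real field $F$, it is CM.

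\emph{Step 3: oddness and automorphy.} Let $v$ be a real place of $F$ and $c_v\in G_F$ an associated complex conjugation. As $M$ is totally imaginary, $c_v$ acts nontrivially on $M$, so $c_v\notin G_M$ and $\langle c_v\rangle\cap G_M=\{1\}$; by Mackey, $\eta_\ell|_{\langle c_v\rangle}\cong\Ind_{\{1\}}^{\langle c_v\rangle}\mathbf{1}$ is the regular representation of $\langle c_v\rangle\cong\Z/2\Z$, whence $\det\eta_\ell(c_v)=-1$. Since $v$ was arbitrary, $\eta_\ell$ is totally odd. Finally, $\chi_1$ corresponds via global class field theory to an algebraic Hecke character of $M$ (as it is geometric), hence is automorphic on $\GL_1(\mathbb{A}_M)$; since $M/F$ is quadratic, hence soluble, the remarks following Definition~\ref{def:automorphic} (Arthur--Clozel) show that $\eta_\ell=\Ind_{G_M}^{G_F}\chi_1$ is automorphic. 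The only genuinely delicate point is Step~2, specifically ruling out a ``mixed'' $M$ (one having both real and complex archimedean places): here the total realness of $F$ and Lemma~\ref{lem:HT}(3) alone do not suffice, and one must use the CM-type constraint on the Hodge--Tate weights of a geometric character. Everything else is routine.
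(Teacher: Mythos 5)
Your proof is correct and takes essentially the same route as the paper: Clifford theory yields $\eta_\ell \cong \Ind_{G_M}^{G_F}\chi_1$ with $M/F$ quadratic, the distinct Hodge--Tate weights $\{0,1-\kappa\}$ rule out $M$ having any real place (so $M$ is CM), and oddness plus automorphic induction over the quadratic extension conclude. The only difference is the justification in your Step 2: the paper cites Fargues' result that a geometric character of a field with no imaginary CM subfield is a finite twist of a power of $\epsilon_\ell$, whereas you invoke the equivalent purity relation $\HT_{\tilde\tau}(\chi_1)+\HT_{\overline{\tilde\tau}}(\chi_1)=\mathrm{const}$ for algebraic Hecke characters---the same structural input on geometric characters, and your explicit treatment of a ``mixed'' $M$ and of the determinant at complex conjugations fills in details the paper leaves implicit.
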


\begin{proof} It follows from the proposition above that $\eta_\ell$ is irreducible with $\HT_\tau (\eta_\ell) = \{0 , -\k +1\}$ for all embeddings $\tau : F \to \Qlbar$. By assumption, there is a finite extension $L$ of $F$ such that $\eta_\ell |_{G_L}$ is reducible. Then by Theorem \ref{thm:Clifford},   $\eta_\ell |_{G_L}\cong \chi_1 \oplus \chi _2$ with distinct characters  $\chi_i$ of $G_L$, and furthermore, there exists a  quadratic extension $M$ of $F$ so that $\chi_1$ can be extended to a character of $G_M$ and  $\eta_\ell = \Ind^{G_F}_{G_M}\chi_1$.
It turns out that $M$ has to be a CM field, for  otherwise $\chi_1 $ would be a  power of the $\ell$-adic cyclotomic character twisted by some finite character (see Proposition 1.12 of \cite{Fargues}) and this contradicts the fact that $\eta_\ell$ has two distinct Hodge-Tate weights $\{0, 1-\k\}$. This shows that  $\eta_\ell$ is totally odd. As $\chi_1$  is geometric, it is well-known that $\chi_1$ is automorphic  (see for example \cite{Fargues}), hence so is $\eta_\ell$  by quadratic automorphic induction  \cite{AC89}.
\end{proof}

\subsection{A proof of Theorem \ref{thm-try}}\label{ss:3.3}

In view of the previous subsection, the heart of the proof of Theorem \ref{thm-try} is to handle the strongly irreducible $\eta_\ell$'s. Owing to the irregularity of Scholl representations and their tensors in general, the approach in \cite{BGGT} by Barnet-Lamb, Gee, Geraghty,  and Taylor cannot be applied directly. We pursue a variation as follows. First, using the results in  \cite{BGGT} we choose a suitable set $\CL$ of rational primes of Dirichlet density $1$ with certain properties. Then for each $\ell\in \CL$ such that $\eta_\ell$ is strongly irreducible, we study properties of the reduction $\bar \eta_\ell$  and its symmetric square $\sym ^2 \bar \eta_\ell$, which enable us to conclude  Theorem \ref{thm-try} by applying Theorem \ref{thm-known}.  Unless specified otherwise, the representations are over algebraically closed fields for convenience. Hence there is no distinction between irreducibility and absolute irreducibility.

The set $\CL$ arises from applying results in \S5.2 of \cite{BGGT} for a number field $F$ (not necessarily totally real) as follows. Let $\{ \Q, S , \{Q_\gp (X)\}, \{r_\ell\}, \{\tv_\tau\}\}$ be a  strongly  compatible system of  representations of $G_F$  as in Definition \ref{def:compatiblefamily}.
Assume that, after conjugating by some $g_\ell \in \GL_n (\Qlbar)$, $r_\ell (G_F) \subset \GL_n (\Q_\ell)$ for all $\ell$. {\footnote{Note that $r_\ell (G_F) \subset \GL_n(\Qlbar)$ by the definition of a compatible system.  It is not clear that in general we can find $g_\ell \in \GL_n (\Qlbar)$ so that $r_\ell (G_F) \subset \GL_{n}(\Q_\ell) $ after conjugating by $g_\ell$, although the characteristic polynomial of each  $h \in G_\Q$ has coefficients in $\Q_\ell$. Luckily this is true for $r_\ell= \rho_\ell$ or $r_\ell = \rho_\ell \otimes \rho_\ell$ used below.}
Let $V_\ell$ denote the $\Q_\ell$-ambient space of $r_\ell$ with dimension $n$.
Let $G_\ell $ be the Zariski closure of {$r_\ell(G_F)$} inside $\GL_n(\Q_\ell)$
with the identity component $G^\circ_\ell$, $G_\ell^{\t{ad}}$ the quotient of $G_\ell^\circ$ by its radical, and $G_\ell^{\t{sc}}  $ the simply connected cover of $G_\ell^{\t{ad}}$. Then we get maps
\begin{equation}\label{Eqn-reductivegps}
\xymatrix{{G^\circ _\ell} \ar@{->>}[r] ^\sigma  & G_\ell^{\t{ad}}  &  G_\ell ^{\t{sc}} \ar@{->>}[l] _ {\tau}} .
\end{equation}
As in the beginning of \S 5.2 in \cite{BGGT},  let $Z_\ell$  denote the center of $G_\ell^\circ$ and $H_\ell:= G _\ell^\t{sc} \times Z_\ell$.
Then there is a natural surjection of algebraic groups $H_\ell \twoheadrightarrow G_\ell^\circ$ with a finite and central kernel. So the  $G_F$-action on the ambient space $V_\ell$ of $r_\ell$ induces a representation of $G_\ell^{\t{sc}}$ on $V_\ell$. In particular, if $\sigma_\ell$ is a subrepresentation of $ \overline \Q_\ell \otimes_{\Q_\ell}  r_\ell$, then the $G_\ell^{\t{sc}}$-action on $V_\ell$ leaves invariant the ambient space of $ \sigma_\ell$ (contained in $ \Qlbar  \otimes_{\Q_\ell}  V_\ell $).  Now apply Proposition 5.2.2 of \cite{BGGT} (where no regularity condition,  i.e. Hodge-Tate weights being distinct,  is required) to our compatible system $\{ r_\ell = \rho_\ell \otimes_{\Q_\ell} \rho_\ell \}$ restricted to $G_K$. Then we see that $ G _\ell ^{\t{sc}}$ acts on the ambient space $W_\ell$ of $\t{Sym} ^2 \eta_\ell$. Furthermore  Proposition 5.2.2 of \cite{BGGT} shows that there exists a  subset $\CL$ of rational primes of Dirichlet density 1 with the following
properties: for any $\ell \in \CL$, there exists a semi-simple group scheme  $\wt G_\ell^{\t{sc}}$ over $\Z_\ell$ with generic fiber $G_\ell ^\t{sc}$ so that $ \wt G_\ell ^\t{sc} (\Z_\ell) = \tau ^{-1} ( \sigma ( r_\ell (G_F)   \cap G^\circ _{\ell})), $ where $\sigma$ and $\tau$ are maps described in \eqref{Eqn-reductivegps}.  Also there exists a $\Z_\ell$-lattice $\Lambda$ inside $ V_\ell$ so that the actions of $G_\ell ^\t{sc}$  and $G_K$ on $V_\ell $ can be naturally extended to  $\wt G_\ell^{\t{sc}}$- and $G_K$-actions on $\Lambda$.

Removing finitely many primes from $\CL$ if necessary, we further require that each $\ell \in \CL$ satisfies the following three conditions:

(i) $K$ is unramified above $\ell$,

(ii) $\ell - 1 > 6 \k$,

(iii) $\rho_\ell |_{G_{\Q_\ell}}$  is crystalline. This follows from the fact that $\rho_\ell$ forms a strongly compatible system as proved by Scholl.
\medskip

 Next we describe some properties of strongly irreducible $\eta_\ell$ with $\ell \in \CL$.

\begin{lemma}\label{lem-shapeofH} Let $\ell \in \CL$. Write $U_\ell$ for the ambient space of $\eta_\ell$ in Theorem \ref{thm-try}.
 Suppose that  $\eta_\ell$ is  strongly irreducible.  Then the $G _\ell^\t{sc}$-action on $U_\ell$ factors through the action of $\t{SL}_2$. In particular, the $G_\ell ^\t{sc }$-action  on the ambient space $W_\ell$ of $\t{Sym} ^2 \eta_\ell$ is  irreducible.
\end{lemma}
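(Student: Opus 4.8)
The plan is to pin down the image of $G_\ell^{\t{sc}}$ inside $\GL(U_\ell)\cong\GL_2$: I would show it is exactly $\t{SL}_2$ and that $U_\ell$ is, as a $G_\ell^{\t{sc}}$-module, its standard $2$-dimensional representation. The ``in particular'' clause then follows at once, since $\sym^2$ of the standard representation of $\t{SL}_2$ is the $3$-dimensional adjoint representation of $\t{PGL}_2$, which is irreducible. First I would pass to a finite extension $K'/K$ over which $\rho_\ell(G_{K'})\subseteq G_\ell^\circ$; a routine coset count then shows $\rho_\ell(G_{K'})$ is Zariski dense in $G_\ell^\circ$, its Zariski closure being a closed finite-index subgroup of $G_\ell$ contained in $G_\ell^\circ$, hence equal to it. Since $\eta_\ell$ is strongly irreducible, $\eta_\ell|_{G_{K'}}$ stays irreducible, so the subspace $U_\ell$ of the ambient space of $\rho_\ell$ --- which is invariant under $\rho_\ell(G_{K'})$ and hence under its Zariski closure $G_\ell^\circ$ --- is an irreducible $G_\ell^\circ$-module.

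Next I would transport this irreducibility through the isogeny $H_\ell=G_\ell^{\t{sc}}\times Z_\ell\twoheadrightarrow G_\ell^\circ$. As $\rho_\ell$ is semi-simple, $G_\ell$ is reductive, $Z_\ell=Z(G_\ell^\circ)$ is diagonalizable, and the kernel of $H_\ell\twoheadrightarrow G_\ell^\circ$ is finite and central, so by Schur's lemma it acts on the irreducible module $U_\ell$ by scalars; hence $U_\ell$ is an irreducible $H_\ell$-module over $\Qlbar$. An irreducible module for a product of groups over an algebraically closed field is an external tensor product, so $U_\ell\cong U'\boxtimes\chi$ with $U'$ an irreducible $G_\ell^{\t{sc}}$-module and $\chi$ a (necessarily one-dimensional) character of the abelian group $Z_\ell$; since $\dim U_\ell=2$ we get $\dim U'=2$, and as a $G_\ell^{\t{sc}}$-module $U_\ell\cong U'$. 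Now $G_\ell^{\t{sc}}$ is connected and semi-simple, so its image in $\GL(U')\cong\GL_2$ is a connected semi-simple subgroup of $\GL_2$, in particular contained in $\t{SL}_2$; over $\Qlbar$ the only connected semi-simple subgroups of $\t{SL}_2$ are the trivial group and $\t{SL}_2$ itself, and the trivial case is impossible because $U'$ is a $2$-dimensional irreducible $G_\ell^{\t{sc}}$-module. Thus the image is $\t{SL}_2$, $U'$ is its standard representation, and the $G_\ell^{\t{sc}}$-action on $U_\ell$ factors through $\t{SL}_2$; consequently $W_\ell=\sym^2 U_\ell$ is, as a $G_\ell^{\t{sc}}$-module, $\sym^2$ of the standard representation of $\t{SL}_2$, which is irreducible.

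The step needing the most care will be the bookkeeping between the two copies of $G_\ell^{\t{sc}}$ that appear: the one built from $\rho_\ell$, whose standard representation is $U_\ell$, and the one built from $r_\ell=\rho_\ell\otimes\rho_\ell$, which acts on $V_\ell\supseteq W_\ell$ and was used to produce the density-$1$ set $\CL$. These are canonically identified through the isogeny $A\mapsto A\otimes A$ (with kernel $\{\pm 1\}$), so that the $G_\ell^{\t{sc}}$-action on $W_\ell$ really is the symmetric square of its action on $U_\ell$, which is what makes the final clause meaningful. The remaining points --- Zariski density of $\rho_\ell(G_{K'})$ in $G_\ell^\circ$, reductivity of $G_\ell$ coming from semi-simplicity of $\rho_\ell$, and the classification of connected semi-simple subgroups of $\t{SL}_2$ --- are all standard.
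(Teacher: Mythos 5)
Your argument is correct, but it is organized around a different mechanism than the paper's proof, so a comparison is worth recording. The paper works entirely inside $\GL_2$: it takes $N$ to be the identity component of the Zariski closure of $\eta_\ell(G_K)$ in $\GL_2(\Qlbar)$, classifies the connected algebraic subgroups of $\t{PGL}_2$ through its Lie algebra (torus, unipotent, Borel, or all of $\t{PGL}_2$), uses strong irreducibility to rule out the three solvable possibilities, and then invokes $\det\eta_\ell=\epsilon_\ell^{1-\kappa}\chi$ from Proposition \ref{prop-purity} to force $N=\GL_2$, after which the factorization of the $G_\ell^{\t{sc}}$-action through $\GL_2^{\t{sc}}=\t{SL}_2$ is read off. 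You never compute the closure of $\eta_\ell(G_K)$ at all: Zariski density of $\rho_\ell(G_{K'})$ in $G_\ell^{\circ}$ together with strong irreducibility makes $U_\ell$ an irreducible $G_\ell^{\circ}$-module, the surjection $H_\ell=G_\ell^{\t{sc}}\times Z_\ell\twoheadrightarrow G_\ell^{\circ}$ and the external tensor decomposition of irreducibles of a product show $U_\ell$ is a $2$-dimensional irreducible $G_\ell^{\t{sc}}$-module, and the classification of connected semisimple subgroups of $\t{SL}_2$ finishes. What your route buys is that the determinant/Hodge--Tate input from Proposition \ref{prop-purity} and the exact determination $N=\GL_2$ are not needed; what it costs is the $\boxtimes$-decomposition step, which is legitimate here because the algebraic action of the semisimple group $G_\ell^{\t{sc}}$ restricts semisimply (your Schur-lemma remark about the central kernel is superfluous, since pullback along a surjection automatically preserves irreducibility). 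Your closing bookkeeping identifying the $G_\ell^{\t{sc}}$ built from $\rho_\ell\otimes\rho_\ell$ with the one built from $\rho_\ell$ via $A\mapsto A\otimes A$ is precisely the point the paper leaves implicit when it passes from $N$ to $G_\ell^{\t{sc}}$; your sketch of it is correct, since the isogeny identifies the semisimple quotients and hence their simply connected covers compatibly with the actions, up to at worst an automorphism of $G_\ell^{\t{sc}}$, which affects neither the factorization through $\t{SL}_2$ nor the irreducibility of $\sym^2 U_\ell$.
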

\begin{proof} Denote by $N$ the identity component of the Zariski closure of $\eta_\ell(G_K)$ in $\GL_2(\overline \Q_\ell)$ which  acts on $U_\ell$.
The projective image of $N$ in  $\t{PGL}_2$ is a connected subgroup, which we claim to be the whole group.
For this, it suffices to look at the Lie algebra $\t{Lie} (\t{PGL} _2)$ by Proposition 3.22 of Milne's note \cite{Mil}. Since $\t{Lie} (\t{PGL} _2)$ consists of  $2 \times 2$ matrices with trace 0,  one finds that the  nontrivial connected algebraic subgroups of $\t{PGL}_2$, up to conjugation, have 4 possibilities: the split torus, the unipotent subgroup,  the Borel subgroup, and $\t{PGL}_2$ itself. Since the Zariski closure of $\eta_\ell(G_K)$ has finitely many connected components, the first $3$ cases imply the reducibility of $\eta_\ell |_{G_M}$ for some finite extension $M$ over $K$, contradicting the assumption on $\eta_\ell$ being strongly irreducible. So the projective image of $N$ is $\t{PGL}_2$. It is easy to check that $\t{SL}_2 $ is contained in the commutator subgroup $[\t{PGL}_2, \t{PGL}_2]=[N, N ]$ of $N$. Hence $\t{SL}_2 \subset N$. Finally, since $\det (\eta_\ell)= \chi \epsilon_\ell^{1-\k}$ with $\chi$  being  a finite character by Proposition \ref{prop-purity} and $\k \ge 2$, so $N/\t{SL}_2$ must have dimension 1. This shows that  $N$ has dimension $4$. Since $N$ is connected, $N$ must be $\GL_2$. This shows that the  $G_\ell ^\t{sc}$-action on $U_\ell$ factors through the action of $\GL_2 ^{\t{sc}}= \t{SL}_2$, and then the $G^\t{sc}_\ell $-action on  $W_\ell = \t{Sym}^2 U_\ell$ is  irreducible.
\end{proof}

\begin{prop}\label{prop-residue-irreducible}  Let $\ell \in \CL$. If $\eta_\ell$ in Theorem \ref{thm-try} is strongly irreducible,   then  the reduction of $\t{Sym} ^2 \eta_\ell |_{G_{K (\zeta_\ell)}}$ is  irreducible.
\end{prop}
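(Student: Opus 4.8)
The plan is to pass through the semisimple-cover picture set up just before the statement. By Lemma \ref{lem-shapeofH}, since $\eta_\ell$ is strongly irreducible and $\ell \in \CL$, the $G_\ell^{\t{sc}}$-action on $U_\ell$ factors through $\t{SL}_2$, and hence the $G_\ell^{\t{sc}}$-action on $W_\ell = \t{Sym}^2 U_\ell$ is the (absolutely) irreducible $3$-dimensional representation $\t{Sym}^2$ of $\t{SL}_2$. By the choice of $\CL$ via Proposition 5.2.2 of \cite{BGGT}, for $\ell \in \CL$ there is a semisimple group scheme $\wt G_\ell^{\t{sc}}$ over $\Z_\ell$ with generic fiber $G_\ell^{\t{sc}}$ and with $\wt G_\ell^{\t{sc}}(\Z_\ell) = \tau^{-1}(\sigma(r_\ell(G_F) \cap G_\ell^\circ))$, together with a $\Z_\ell$-lattice $\Lambda \subset V_\ell$ (where $V_\ell$ is the ambient space of $r_\ell = \rho_\ell \otimes \rho_\ell$) stable under both $\wt G_\ell^{\t{sc}}$ and $G_K$. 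First I would cut down to the $\t{Sym}^2$-piece: intersecting $\Lambda$ with the ambient space $W_\ell$ of $\t{Sym}^2\eta_\ell$ gives a $\wt G_\ell^{\t{sc}}$- and $G_K$-stable $\Z_\ell$-lattice $\Lambda_W$ in $W_\ell$, and reducing modulo $\ell$ gives an action of $\wt G_\ell^{\t{sc}}(\F_\ell)$ — equivalently of $\overline{\t{Sym}^2\eta_\ell}(G_K)$ through its image in the special fiber — on $\Lambda_W/\ell\Lambda_W$.

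The key point is that this mod-$\ell$ representation of the special fiber $\overline{\wt G}_\ell^{\t{sc}}$ is still absolutely irreducible. Since $\wt G_\ell^{\t{sc}}$ is a semisimple group scheme over $\Z_\ell$ whose generic fiber acts on $W_\ell$ as the $3$-dimensional irreducible representation of (a group containing) $\t{SL}_2$, the special fiber is again semisimple of the same type, and the standard theory of reduction of Weyl modules / irreducible modules for $\t{SL}_2$ in characteristic $p = \ell$ shows that $\t{Sym}^2$ of the standard representation remains irreducible as long as $\ell \neq 2$ — indeed $\t{Sym}^2$ is the Weyl module of highest weight $2$, which is irreducible precisely when $2 < \ell$, and this is guaranteed by condition (ii), $\ell - 1 > 6\kappa \geq 12$. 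Hence the image of $\wt G_\ell^{\t{sc}}(\F_\ell)$ in $\t{GL}(\Lambda_W/\ell\Lambda_W)$ acts absolutely irreducibly, at least after base change to $\overline\F_\ell$; here one should also invoke that $\wt G_\ell^{\t{sc}}(\F_\ell)$ is Zariski-dense enough in the special fiber, which for simply connected semisimple groups over finite fields of order $> 3$ or so is classical (and removing finitely many primes from $\CL$ takes care of the small residue characteristics).

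Finally I would transfer this back to the statement about $G_{K(\zeta_\ell)}$. The subtlety is that $G_\ell^{\t{sc}}$ is only a cover of the derived group of $G_\ell^\circ$, so the image of $G_K$ itself (rather than of $G_\ell^{\t{sc}}$) differs from the full special fiber by (i) the radical/central torus — which acts by scalars on the irreducible $\t{Sym}^2$, hence does not affect irreducibility — and (ii) passing to the identity component $G_\ell^\circ$, i.e. restricting from $G_K$ to the finite-index subgroup corresponding to $r_\ell(G_K) \cap G_\ell^\circ$; this finite-index subgroup cuts out a finite extension of $K$, and since $\overline{\t{Sym}^2\eta_\ell}$ has image in $\t{PGL}$ controlled by $\det \eta_\ell = \chi\epsilon_\ell^{1-\kappa}$, the relevant finite extension is contained in $K(\zeta_\ell)$ (the cyclotomic twist accounts for the $\epsilon_\ell$-factor and $\chi$ contributes a fixed finite piece absorbed by enlarging by a bounded amount, which again is harmless after deleting finitely many $\ell$). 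Therefore the restriction $\overline{\t{Sym}^2\eta_\ell}|_{G_{K(\zeta_\ell)}}$ still has absolutely irreducible image, which is the assertion. The main obstacle I anticipate is the bookkeeping in this last step: carefully checking that the discrepancy between "the $G_\ell^{\t{sc}}$-image" and "the $G_{K(\zeta_\ell)}$-image" is only by scalars and by a subgroup whose fixed field lies in $K(\zeta_\ell)$, and that the reduction-mod-$\ell$ of the Weyl module $\t{Sym}^2$ genuinely stays irreducible under the hypothesis $\ell > 7$ (really $\ell - 1 > 6\kappa$) — the representation-theoretic input itself is standard, but matching it precisely with the group-scheme $\wt G_\ell^{\t{sc}}$ furnished by \cite{BGGT} requires care.
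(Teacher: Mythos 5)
Your first step (irreducibility of the reduction of $\t{Sym}^2\eta_\ell$ as a $G_K$-module, via Lemma \ref{lem-shapeofH}, the group scheme $\wt G_\ell^{\t{sc}}$ and a stable lattice) matches the paper in substance; the paper gets the residual irreducibility by citing Proposition 5.3.2(6) of \cite{BGGT} rather than by a direct Weyl-module argument, but that difference is minor. The genuine gap is in your last paragraph, the passage from $G_K$ to $G_{K(\zeta_\ell)}$. You treat this as bookkeeping about scalars and the determinant $\det\eta_\ell=\chi\epsilon_\ell^{1-\kappa}$, but nothing of that sort can work: the danger is not a central or abelian twist, it is that the $3$-dimensional irreducible $G_K$-module $\t{Sym}^2\bar\eta_\ell$ could become reducible upon restriction to the proper normal subgroup $G_{K(\zeta_\ell)}$ (equivalently, by Clifford theory, be induced from a character of the cubic subextension of $K(\zeta_\ell)/K$, or become a sum of characters there), and this possibility is perfectly compatible with any determinant computation. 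Moreover your proposed fix, ``deleting finitely many $\ell$'' to absorb the finite piece cut out by $\chi$ or by the connectedness field of $G_\ell$, is not available: the proposition is asserted for each fixed $\ell\in\CL$ and the field $K(\zeta_\ell)$ varies with $\ell$, so there is no uniform finite extension to absorb, and the fixed field of $r_\ell^{-1}(G_\ell^\circ)$ has no reason to lie in $K(\zeta_\ell)$ anyway (nor would that containment help, since the issue is irreducibility over the smaller group $G_{K(\zeta_\ell)}$). Also note that $\wt G_\ell^{\t{sc}}(\Z_\ell)=\tau^{-1}(\sigma(r_\ell(G_F)\cap G_\ell^\circ))$ is built from the full Galois image, so the group-scheme irreducibility statement does not by itself say anything about the image of the subgroup $G_{K(\zeta_\ell)}$.

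What the paper actually does at this point is a substantive local argument, and this is where the hypothesis $\ell-1>6\kappa$ (and $K$ unramified at $\ell$) is used. Assuming $\t{Sym}^2\bar\eta_\ell|_{G_{K(\zeta_\ell)}}$ reducible, Clifford theory forces $\t{Sym}^2\bar\eta_\ell\simeq\t{Ind}_{G_M}^{G_K}\chi$ with either $M=K$ (quickly contradicted, since then a twist of the module would factor through the cyclic group $\gal(K(\zeta_\ell)/K)$ and be reducible) or $[M:K]=3$ with $M$ the unique cubic subfield of $K(\zeta_\ell)$. In the cubic case one computes $\bar\eta_\ell$ on the inertia group $I_v$ at a place $v\mid\ell$ by Fontaine--Laffaille theory in terms of fundamental characters, so that $\t{Sym}^2\bar\eta_\ell|_{I_v}$ is a sum of three explicit powers such as $\omega_{2f}^{2h},\ \omega_{2f}^{h(1+\ell)},\ \omega_{2f}^{2h\ell}$; since $K(\zeta_\ell)/K$ is totally ramified at $v$, the three Clifford constituents must all agree on the index-$3$ inertia subgroup $I_{v'}$, whereas the bound $\ell-1>6\kappa$ shows the three fundamental-character powers remain pairwise distinct on $I_{v'}$, a contradiction. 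Your proposal contains no replacement for this step, so as written it does not prove the proposition.
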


\begin{proof} We first remark that Proposition 5.3.2 in \cite{BGGT} cannot be  applied directly because $\rho_\ell \otimes \rho_\ell$ may not be regular.  Fortunately, we can follow their idea, which is built upon the work \cite{Lar95} of Larsen.

By Lemma \ref{lem-shapeofH}, if $\eta_\ell|_{G_M}$ is  strongly irreducible, then $\t{Sym} ^2 \eta_\ell$ is an absolutely irreducible $G^\t{sc}_\ell$-module.  Then, by Proposition 5.3.2 (6) of \cite{BGGT}, there exists a finite unramified extension  $M _\lambda$ of $\Q_\ell$ so that $\t{Sym}^2 \eta_\ell$ is defined over $M_\lambda$ as an absolutely irreducible $\wt G_\ell ^{\t {sc}}$-module. Finally, the mod  $\lambda$ reduction of $(\O_{M_\lambda} \otimes_{\Z_\ell} \Lambda ) \cap \t{Sym} ^2 \eta_\ell$ is absolutely irreducible as an $\wt G _\ell ^{\t{sc}} (\Z_\ell)$-module. Therefore, we conclude that  as a $G_K$-module, the reduction of $ \t{Sym} ^ 2  \eta_\ell $ is absolutely irreducible.

 It remains to prove that $\t{Sym} ^2 \bar \eta_\ell |_{G _{K (\zeta_ \ell)}}$ is irreducible. By construction, $K$ is unramified over $\ell$ so that $\gal (K(\zeta_\ell)/K) \simeq (\Z/ \ell \Z) ^\times$. Write $\overline W _\ell$ for the ambient space of $\t{Sym} ^2 \bar \eta_\ell$. Suppose $\overline  W_\ell|_{G_{K(\zeta_\ell)}}$ is reducible and we will derive a contradiction.   Since $\overline W_\ell$ is irreducible, by Clifford theory recalled in \S \ref{sec:clifford}, we see that $\overline W_\ell|_{G_{K(\zeta_\ell)}}$ is a  direct sum of characters $\chi _i$ of $G _{K(\zeta_\ell)}$. Let $\chi = \chi _1$ and set $H := \{g \in G_K | \chi ^g \simeq \chi \} $ and $M = (\overline \Q) ^H \subset K(\zeta_\ell)$.  By Theorem \ref{thm:Clifford},  $\overline  W_\ell \simeq \t{Ind} _{G_M} ^{G _K} (\chi' \otimes \gamma)$ for a character $\chi'$ of $G_M$ extending $\chi$ and a representation $\gamma$ of $G_M$ with finite image. Since $\overline  W_\ell$ has dimension 3, either   $M=K$ or $[M: K] =3$.

Case 1:  $M = K$. In this case, we have  $\overline  W_\ell \simeq \chi' \otimes \gamma$ where $\chi'$ is a character of $G_K$ extending $\chi$. Then $\overline  W'_\ell := ((\chi') ^{-1} \otimes \overline W_\ell) |_{G_{K(\zeta_\ell)}} $ is trivial. So $\overline  W'_\ell$ is a 3-dimensional representation of the cyclic group $\gal (K(\zeta_\ell)/ K)$ and hence must be reducible, so is $\overline  W_\ell$, a contradiction.

Case 2: $[M: K] = 3$. In this case, $\chi$ can be extended to a character of $G_M$ so that $\overline  W_\ell \simeq  \t{Ind}_{ G_M} ^{G_K} \chi $ and $M$ is the unique subfield of $ K(\zeta_\ell)$ with degree $3$ over $K$. We have $\ell -1 > \k$. Let $v$ be a prime of $K$ above $\ell$ with $[K_v :\Q_\ell] = f$ and $I_v$ be the inertia subgroup of $G_v: = \gal (\overline K_v/ K_v)$. It follows from the Fontaine-Lafaillle theory that $\bar \eta_\ell|_{I_v} \simeq \omega^h_{2f} \oplus \omega ^{h\ell}_{2f}$ (resp. $(\bar \eta _\ell |_{I_v} )^{\t{ss}}= \omega^a_f \oplus \omega ^b_f $) if $\bar \eta_\ell|_{G_v}$ is irreducible
 (resp. reducible).  Here $\omega_m$ denotes the fundamental character given by \begin{equation}\label{eq:fund-cha}\omega_{m} (g) = \frac{g (\sqrt[\ell ^{m}-1]{\ell })}{\sqrt[\ell ^{m}-1]{\ell }}\end{equation}  for $g \in G_K$, and  $h = \sum\limits_{i = 0} ^{2f-1} h_i p ^i$ with $\{h_i, h _{i +f}\}= \{0,  1-\k\}$ (resp. $a = \sum\limits _{i =1}^{f-1} a_i p ^i$ and $b = \sum\limits_{i = 0} ^ {f-1} b_i p ^i$ with $\{a_i , b_i\} = \{0, 1-\k\}$).

Consequently, $\overline W_\ell |_{I_v} \simeq \omega ^{2h}_{2f} \oplus \omega ^{h (1+\ell)}_{2f} \oplus \omega_{2f} ^{2h\ell} $ or $(\overline W_\ell |_{I_v})^{\t{ss}} \simeq \omega^{2a}_f  \oplus \omega ^{a+b}_{f} \oplus \omega_f ^{2b} $. Since $K(\zeta_\ell)$ is totally ramified at $v$, so is $M$. Denote by $v'$ the only place of $M$ above $v$. Then the inertia subgroup $I_{v'}$ of $\gal (\overline K_v /M_{v'})$ is an index-$3$  subgroup of $I_v$.  Let $\tau$ be a generator of $\gal (M / K)$. We have $\overline W_\ell |_{G_M}\simeq \chi \oplus \chi ^\tau \oplus \chi ^{\tau ^2}$, and similarly for $\overline W_\ell |_{I_{v'}}$.

Therefore the set \{$\chi$, $\chi ^\tau$, $\chi ^{\tau ^2}$\} must match with either  \{$\omega ^{2h}_{2f} ,  \omega ^{h (1+\ell )}_{2f} ,  \omega_{2f} ^{2h\ell} $\} or \{$  \omega^{2a}_f , \ \omega ^{a+b}_{f} , \  \omega_f ^{2b} $\} when restricted to $I_{v'}$. Since the fundamental characters all factor through the tame inertia subgroup, which is commutative,  it is easy to check that the restrictions of $\chi$,  $\chi ^\tau$ and $\chi ^{\tau ^2}$ to $I_{v'}$ are isomorphic and hence identical.  To derive a contradiction, it suffices to show that  $\omega ^{2h}_{2f} ,  \omega ^{h (1+\ell )}_{2f} ,  \omega_{2f} ^{2h\ell } $ restricted to $I_{v'}$ are distinct, and so are the restrictions of $  \omega^{2a} , \ \omega ^{a+b }_{f} , \  \omega_f ^{2b} $ to $I_{v'}$.  By \eqref{eq:fund-cha}, the image of $\omega_{2f}$ is a cyclic group of order $\ell ^{2f}-1$. Since $I_{v'}$ is a subgroup of $I_v$ with index $3$, so  $\omega _{2f}( I_{v'})$ is a cyclic group with order at least $(\ell ^{2f}-1) /3 $. Since $\ell -1 >6\k$,  we see that  $6h , \  3h(1+ \ell), \ 6h\ell$ are distinct inside $\Z/ (\ell ^{2f}-1)\Z$. Then  $\omega ^{2h}_{2f} ,  \omega ^{h (1+\ell )}_{2f} ,  \omega_{2f} ^{2h\ell} $ restricted to $I_{v'}$ are all distinct. The same conclusion can be drawn for the triple $  \omega^{2a}_f , \ \omega ^{a+b}_{f} , \  \omega_f ^{2b} $.
\end{proof}

With the above preparation, we are ready to prove Theorem  \ref{thm-try}.

\begin{proof}[Proof of Theorem \ref{thm-try}]
Let $\ell \in \CL$.~We distinguish two cases.
\begin{itemize}
\item[(I).]  The representation $\eta_\ell$ is strongly irreducible.  By Proposition \ref{prop-residue-irreducible}, $\t{Sym} ^2 \bar \eta_\ell |_{G_{K(\zeta_\ell)}}$ is irreducible, and hence so is  $\sym^2 \bar \eta_\ell|_{G_{F(\zeta_\ell)}}$. Then Theorem \ref{thm-try} follows from Theorem \ref{thm-known}.
\item[(II).] The representation $\eta$ is potentially reducible. Then $\eta_\ell$ is totally odd and automorphic by Proposition \ref{prop-auto}.
\end{itemize}
\end{proof}

\subsection{Applications of Theorem \ref{thm-try}} More about the representations $\eta_\ell$ in Theorem \ref{thm-try} can be concluded provided that we have more information on the characteristic polynomials at the Frobenius elements in $G_K$, as shown below.

\begin{prop}\label{prop-automorphy} Suppose that the representations $\eta _\ell$ in Theorem \ref{thm-try} satisfy the additional condition
\begin{itemize}
\item [(C)] There is a finite set of primes $S$ of $K$ so that at each prime $\gp$ of $K$ outside $S$, the characteristic polynomial of {$\eta_\ell |_{G_K}(\Frob_\gp)$} is independent of the primes $\ell$ not divisible by $\gp$.
\end{itemize}
Then $\{\eta_\ell|_{G_K}\}$ forms a compatible system. If we further assume that $K/F$ is a solvable extension, then $\eta_\ell$ is potentially automorphic for all $\ell$.
\end{prop}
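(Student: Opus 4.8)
The plan is to establish the two assertions in turn. For the first, I would simply unwind Definition \ref{def:compatiblefamily} for the family $\{\eta_\ell|_{G_K}\}$. Condition (C) directly supplies the monic degree‑$2$ polynomials $Q_\gp(X)$, and since each divides the integral characteristic polynomial of $\rho_\ell(\Frob_\gp)$ coming from the Scholl system $\{\rho_\ell\}$, its coefficients lie in a number field $E$ over which the system can be realized. Because $\eta_\ell|_{G_K}$ is a subrepresentation of $\rho_\ell|_{G_K}$ and $\{\rho_\ell\}$ is a strongly compatible system, $\eta_\ell|_{G_K}$ is unramified outside the finite set consisting of $S$ together with the places of $K$ over the bad primes of $\{\rho_\ell\}$, is crystalline at the places over $\ell$ outside this set, and is de Rham at every place over $\ell$. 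Finally, Proposition \ref{prop-purity} gives $\HT_\tau(\eta_\ell|_{G_K}) = \{0, 1-\kappa\}$ for every embedding $\tau$ and every $\ell$. Hence $\{\eta_\ell|_{G_K}\}$ is a (strongly) compatible system over $E$.

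For the potential automorphy of every $\eta_\ell$, assume now $K/F$ is solvable. By Theorem \ref{thm-try} there is a density‑$1$ set $\CL$ of rational primes with $\eta_\ell$ totally odd and potentially automorphic for $\ell \in \CL$; those $\ell$ for which $\eta_\ell$ is moreover potentially reducible are already handled, with full automorphy, for \emph{all} $\ell$ by Proposition \ref{prop-auto}, so the real issue is to treat the strongly irreducible $\eta_\ell$ with $\ell \notin \CL$. I would fix $\ell_1 \in \CL$ and a finite totally real extension $F'/F$ with $\eta_{\ell_1}|_{G_{F'}}$ automorphic. Since $K/F$ is solvable, so is $KF'/F'$, and therefore $\eta_{\ell_1}|_{G_{KF'}}$ is automorphic by Arthur--Clozel solvable base change \cite{AC89}; thus the compatible system $\{\eta_\ell|_{G_K}\}$ of the first part --- which by Proposition \ref{prop-purity} is irreducible, pure of weight $\kappa-1$, and regular, as its Hodge--Tate weights $0$ and $1-\kappa$ are distinct --- contains a potentially automorphic member.

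It then remains to propagate potential automorphy from this single member to the whole system. The approach I would take mirrors \S\ref{ss:3.3}: for each prime $\ell$ one analyzes $\bar\eta_\ell$ and $\sym^2 \bar\eta_\ell$ via Fontaine--Laffaille theory and the group‑theoretic input of Lemma \ref{lem-shapeofH} and Proposition \ref{prop-residue-irreducible} in order to bypass the irregularity of the ambient Scholl representation, and then feeds this, together with the automorphic members coming from $\CL$, into the automorphy lifting and potential automorphy machinery of \cite{BGGT} for compatible systems; solvable base change and automorphic induction along $K/F$ are used throughout to transport automorphy between $K$ and the totally real field $F$. Concluding potential automorphy of $\eta_\ell$ over $F$ is then immediate from potential automorphy of $\eta_\ell|_{G_K}$ over $K$, since $[K:F] < \infty$. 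The main obstacle is precisely this last step --- upgrading potential automorphy of a density‑$1$ set of members of a compatible system to potential automorphy of every member --- because the standard compatible‑system arguments presuppose that the base field is totally real or CM, which $K$ need not be, and because the representation induced from $G_K$ up to $G_F$ becomes irregular once $[K:F]>1$; the solvability of $K/F$ is exactly the hypothesis that lets one circumvent both difficulties, by running the argument over $F$ and transferring along $K/F$ through iterated cyclic base change and automorphic induction.
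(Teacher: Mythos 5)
Your proposal has genuine gaps in both halves. For the first assertion, the step ``since each $Q_\gp(X)$ divides the integral characteristic polynomial of $\rho_\ell|_{G_K}(\Frob_\gp)$, its coefficients lie in a number field $E$'' does not follow. Dividing a $\Z$-polynomial only tells you that the coefficients of the degree-$2$ factor are algebraic of bounded degree \emph{at each individual} $\gp$; as $\gp$ varies these factors can generate infinitely many distinct number fields (already quadratic factors of varying quartic integer polynomials do), and Definition \ref{def:compatiblefamily} requires a single $E$ with $Q_\gp(X)\in E[X]$ for all $\gp$. This is exactly the nontrivial point, and the paper produces $E$ by first placing \emph{one} member $\eta_{\ell'}$ inside a compatible system of representations of $G_F$: if some $\eta_{\ell'}$ is potentially reducible, Proposition \ref{prop-auto} makes it automorphic (induced from a Hecke character of a CM quadratic extension), hence a member of a compatible system; if instead every $\eta_\ell$ is strongly irreducible, one takes $\ell'\in\CL$ and invokes Proposition \ref{prop-residue-irreducible} together with Theorem 5.5.1 of \cite{BGGT} to embed $\eta_{\ell'}$ in a compatible system $\{\tilde\eta_\lambda\}$ over $G_F$. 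Restricting to $G_K$ gives the number field and the polynomials, and condition (C) then identifies the characteristic polynomials of \emph{all} $\eta_\ell|_{G_K}(\Frob_\gp)$ with these, which is what makes $\{\eta_\ell|_{G_K}\}$ a compatible system.

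For the second assertion, you correctly set up the anchor ($\ell_1\in\CL$, a totally real $F'$ with $\eta_{\ell_1}|_{G_{F'}}$ automorphic, and solvable base change to $KF'$), but you then leave the propagation to the remaining primes $\ell\notin\CL$ as ``the main obstacle,'' to be handled by feeding the system back into automorphy lifting machinery. That route is not justified and would in general fail: lifting theorems impose conditions at $\ell$ (size of $\ell$, crystallinity in the Fontaine--Laffaille range, residual irreducibility of $\sym^2\bar\eta_\ell$) which are exactly what one cannot guarantee for an arbitrary prime outside $\CL$. The paper needs no lifting at those primes: automorphy in the sense of Definition \ref{def:automorphic} is a condition at almost all finite places, determined by the Frobenius characteristic polynomials. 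So once $\{\eta_\ell|_{G_{KL}}\}$ (with $L=F'$ in your notation) is a compatible system of irreducible $2$-dimensional representations, each member has the same Frobenius data as the automorphic member $\eta_{\ell'}|_{G_{KL}}$, hence corresponds to the \emph{same} automorphic representation $\pi$ of $\GL_2(\mathbb{A}_{KL})$; since irreducible representations are determined by traces, this gives automorphy of every $\eta_\ell|_{G_{KL}}$, and potential automorphy of every $\eta_\ell$ over $F$ follows because $KL/F$ is finite. In short, the solvability of $K/F$ is used only to base-change the single anchor down to $KL$ (and, in the potentially reducible case, to $K$); the passage to all $\ell$ is by compatibility alone, not by lifting, and this is the step your write-up leaves open.
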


\begin{proof} To prove that $\{\eta_\ell|_{G_K}\}$ forms a compatible system, it suffices to show the existence of a number field  containing coefficients of the characteristic polynomials of $\eta_\ell |_{G_K}(\Frob_\gp)$ for all primes $\gp$ of $K$ not in $S$.

We first assume that there exists a prime $\ell'$  so that $\eta_{\ell'}$ is potentially reducible. Then  Proposition \ref{prop-auto} implies that $\eta_{\ell'}$ is automorphic. In particular, there exists a compatible system of \emph{automorphic} $\ell$-adic Galois representations   $\{ E , S_1, \{ Q_{\tilde \gp} (X)\} , \{\tilde \eta_\lambda \} \} $ of $G_F$ so that $\eta_{\ell '}\simeq \tilde \eta_{\lambda}$ for a prime $\lambda$ of $E$ above $\ell'$. After restricting the representations to $G_K$, we obtain a compatible system  $\{ E , S_2, \{ Q_\gp (X)\} , \{\tilde \eta_\lambda|_{G_K} \} \} $ of representations of $G_K$. Here $S_1$  (resp. $S_2$) is a finite set of places of $F$ (resp. $K$), and $\tilde \gp$ (resp. $\gp$) runs through all finite places of $F$ (resp. $K$) outside $S_1$ (resp. $S_2$), and $E$  contains the coefficients of the characteristic polynomials $Q_{\tilde \gp}(X)$ and  $Q_\gp(X)$. In view of condition (C), for almost all primes $\gp$, $Q_\gp(X)$ is the characteristic polynomial of $\eta_{\ell} |_{G_K}(\Frob_\gp)$ for $\ell = \ell'$ and hence all primes $\ell$ not divisible by $\gp$. Consequently $\{ E,  S_2, \{ Q_\gp (X)\} , \{ \eta_\lambda |_{G_K} \}\}$ forms a compatible system, where $\eta_\lambda = \eta_\ell$ for all primes $\lambda$ of $E$ dividing $\ell$. If $K/F$ is solvable, then by solvable base change theorem in \cite{AC89}, we see that $\{\tilde \eta _\lambda |_{G_K} \}$ is automorphic, so are $\eta_\ell|_{G_K}$.

Next we assume that  for each prime $\ell$,  $\eta_\ell $ is strongly irreducible.
Let $\CL$ be the set  of rational primes as in Theorem \ref{thm-try}. Choose a prime $\ell' \in \CL$.
Then Proposition \ref{prop-residue-irreducible} and Theorem 5.5.1 in \cite{BGGT} imply the existence of a compatible system of $\ell$-adic Galois representations   $\{ E , \tilde S, \{ Q_{\tilde {\gp}} (X)\} , \{\tilde \eta_\lambda \} \}$ of $G_F$ so that $\eta_{\ell'}  \simeq \tilde \eta_{\lambda}$ for a prime $\lambda$ of $E$ above $\ell'$. So by restricting $\{\tilde \eta_\lambda\}$ to $G_K$, the same argument as the potentially reducible case above shows that $\{\eta_\ell|_{G_K}\}$ forms a compatible system. Furthermore, by Theorem \ref{thm-try} there exists a  finite extension $L / F$ so that $ \eta_{\ell'}|_{G_L}$ is automorphic. If $K$ is solvable over $F$, then so is $KL/L$. Hence by solvable base change, $\eta_{\ell'} |_{G_{LK}}$  is automorphic, corresponding to an automorphic representation $\pi$ of $\GL_2$ over $KL$. Now using the facts that the system $\{ \eta_\ell |_{G_{KL}} \}$ is compatible, all $\eta_\ell |_{G_{KL}}$ are irreducible and they are determined by the traces of the elements in $G_{KL}$, we conclude that every $\eta_\ell |_{G_{KL}}$ is automorphic, corresponding to the same representation $\pi$ as  $\eta_{\ell'} |_{G_{KL}}$. This proves that $\eta_\ell$ is potentially automorphic for all primes $\ell$.
\end{proof}

Next we draw some consequences on (potential) automorphy of Scholl representations from Theorem \ref{thm-try}.

\begin{prop}\label{prop-save}{Let $\{\rho_\ell \}$ be a  compatible system of Scholl representations of $G_\Q$.} { Let $K$ be a finite solvable extension of a totally real field $F$.} Suppose that for each $\ell$ we have
\begin{enumerate}
\item $\rho_\ell|_{G_K} \simeq \bigoplus _{i =1} ^d \sigma_{\ell, i }$ where {$\sigma_{\ell, i}$ are degree-$2$ representations of $G_K$;}
\item For each $1 \le i \le d$ there is a {$2$-dimensional representation $\eta_{\ell, i}$ of $G_F$} such that $\eta_{\ell, i}|_{G_K}$ is finitely  projectively equivalent to  $\sigma_{\ell , i}$.
\end{enumerate}
Then $\rho_\ell$ is potentially automorphic for all $\ell$.
\end{prop}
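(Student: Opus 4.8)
The plan is to reduce the statement to Theorem \ref{thm-try} applied to the two‑dimensional pieces $\eta_{\ell,i}$. Fix a prime $\ell$. Since $\rho_\ell|_{G_K}\simeq\bigoplus_{i=1}^d\sigma_{\ell,i}$, it is enough to find a number field $L\supset\Q$ over which every $\sigma_{\ell,i}|_{G_L}$ is automorphic: then $\rho_\ell|_{G_L}$ is the isobaric sum of the corresponding automorphic representations of $\GL_2(\BA_L)$, hence automorphic, so $\rho_\ell$ is potentially automorphic. By hypothesis (2) we may write $\eta_{\ell,i}|_{G_K}\simeq\psi_{\ell,i}\otimes\sigma_{\ell,i}$ with $\psi_{\ell,i}$ a finite‑order character of $G_K$; let $L_\ell/K$ be the finite abelian extension trivializing all the $\psi_{\ell,i}$, so that $\eta_{\ell,i}|_{G_{L_\ell}}\simeq\sigma_{\ell,i}|_{G_{L_\ell}}$ is a subrepresentation of $\rho_\ell|_{G_{L_\ell}}$. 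As $L_\ell$ is a finite extension of the totally real field $F$, Theorem \ref{thm-try} applies to each $\eta_{\ell,i}$, with $L_\ell$ playing the role of the field ``$K$'' there, and produces a set of primes of Dirichlet density one for which every $\eta_{\ell,i}$ is totally odd and potentially automorphic. Discarding a controlled finite set of primes (those where $L_\ell$ ramifies above $\ell$ or the density‑one mechanism misbehaves), we fix one such prime $\ell_0$.

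For this $\ell_0$, separate the indices according to whether $\eta_{\ell_0,i}$ is strongly irreducible or potentially reducible. The strongly irreducible ones satisfy hypotheses (a)--(d) of Theorem \ref{thm-known} — this is precisely what the proof of Theorem \ref{thm-try} verifies, with (c) supplied by Proposition \ref{prop-residue-irreducible} — so Theorem \ref{thm-known}(2) yields a single finite \emph{totally real} Galois extension $F'/F$ over which all of them become automorphic; the potentially reducible ones are automorphic already over $F$ by Proposition \ref{prop-auto}, hence over $F'$. Because $K/F$ is solvable, so is $F'K/F'$; by solvable base change \cite{AC89} each $\eta_{\ell_0,i}|_{G_{F'K}}$ is automorphic, and therefore so is $\sigma_{\ell_0,i}|_{G_{F'K}}\simeq\psi_{\ell_0,i}^{-1}|_{G_{F'K}}\otimes\eta_{\ell_0,i}|_{G_{F'K}}$ (twisting by a finite‑order Hecke character preserves automorphy). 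Hence $\rho_{\ell_0}|_{G_{F'K}}\simeq\bigoplus_i\sigma_{\ell_0,i}|_{G_{F'K}}$ is automorphic, and $\rho_{\ell_0}$ is potentially automorphic. Moreover the automorphic representations of $\GL_2(\BA_{F'})$ matching the $\eta_{\ell_0,i}|_{G_{F'}}$ are Hilbert modular forms over the totally real field $F'$ (respectively, for the potentially reducible indices, are obtained by automorphic induction from algebraic Hecke characters of CM fields), each of which carries an attached compatible system $\{s_{i,\mu}\}$ of two‑dimensional $\mu$‑adic Galois representations of $G_{F'}$ with $s_{i,\mu_0}\simeq\eta_{\ell_0,i}|_{G_{F'}}$ for $\mu_0\mid\ell_0$.

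To handle every prime $\ell$, view each $\psi_{\ell_0,i}$ as a ($\mu$‑independent) compatible system of finite‑order characters of $G_{F'K}$, and form the family $\{\bigoplus_i\psi_{\ell_0,i}^{-1}\otimes s_{i,\mu}|_{G_{F'K}}\}_\mu$; this is a compatible system of representations of $G_{F'K}$ whose member at $\mu_0$ is isomorphic to $\rho_{\ell_0}|_{G_{F'K}}$. Since $\{\rho_\ell|_{G_{F'K}}\}_\ell$ is also a compatible system (the Scholl system restricted to $G_{F'K}$) and the two agree at $\mu_0$, they have equal Frobenius characteristic polynomials at almost all places, whence $\rho_\ell|_{G_{F'K}}\simeq\bigoplus_i\psi_{\ell_0,i}^{-1}\otimes s_{i,\mu}|_{G_{F'K}}$ for every prime $\ell$ (with $\mu\mid\ell$). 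The right‑hand side is automorphic: each $s_{i,\mu}$ is automorphic over the totally real field $F'$, its restriction to $G_{F'K}$ is automorphic by solvable base change, the twist by $\psi_{\ell_0,i}^{-1}$ preserves automorphy, and the isobaric sum is an automorphic representation of $\GL_{2d}(\BA_{F'K})$. Thus $\rho_\ell|_{G_{F'K}}$ is automorphic for all $\ell$, so $\rho_\ell$ is potentially automorphic for all $\ell$.

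The main obstacle is the passage from ``potentially automorphic for $\ell$ in a density‑one set'' to ``for all $\ell$'' in the absence of the compatibility hypothesis (C) of Proposition \ref{prop-automorphy}: this requires knowing that the automorphic targets over $F'$ — Hilbert modular forms and algebraic Hecke characters of CM fields — genuinely sit in compatible systems of Galois representations (which is exactly why it is essential that $F'$ be produced totally real by Theorem \ref{thm-known}(2)) and then matching those compatible systems with the restricted Scholl system via Chebotarev. A secondary difficulty is the bookkeeping around the finite‑order twisting characters $\psi_{\ell,i}$, especially their ramification above $\ell$ and the resulting dependence of the auxiliary fields $L_\ell$ on $\ell$, together with the need to keep every field that appears ($L_\ell$, $F'$, $F'K$) either totally real or a solvable extension of a field over which automorphy is already known, so that solvable base change and the known (potential) automorphy results can be applied at each stage.
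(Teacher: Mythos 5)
Your core argument is the paper's: pick a good prime $\ell_0$ in the density-one set, split the indices into strongly irreducible pieces (where Proposition \ref{prop-residue-irreducible} supplies hypothesis (c) of Theorem \ref{thm-known} and part (2) of that theorem gives a \emph{single} totally real Galois $F'/F$ making them all automorphic) and potentially reducible pieces (Proposition \ref{prop-auto}), then untwist the finite projective equivalence over a solvable extension and use solvable base change \cite{AC89}. Where you genuinely diverge is the passage to all $\ell$: you re-import compatible systems attached to the Hilbert modular forms and CM Hecke characters over $F'$ and compare them with the Scholl system by Chebotarev. That works, but it is heavier than needed and obscures the point: the paper simply observes that $\{\rho_\ell\}$ itself is a compatible system, so once $\rho_{\ell_0}|_{G_{K'}}$ matches an automorphic representation at almost all places, every $\rho_\ell|_{G_{K'}}$ matches the \emph{same} one (automorphy in Definition \ref{def:automorphic} is an almost-all-places statement). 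Your framing of this as "the main obstacle, in the absence of hypothesis (C) of Proposition \ref{prop-automorphy}" misses that no compatibility of the $\eta$'s or $\sigma$'s is required here at all — only compatibility of the full Scholl system, which is given.

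Three steps in your write-up need repair, all fixable with the paper's own tools. First, "the potentially reducible ones are automorphic already over $F$, hence over $F'$" is not licensed: $F'/F$ is Galois but not necessarily solvable, so you cannot base-change a general automorphic representation; you must instead note, as the paper does, that such $\eta_{\ell_0,i}$ is induced from a Hecke character of a quadratic CM extension, and that this structure persists over $F'$, whence automorphy over $F'$ directly. Second, hypothesis (b) of Theorem \ref{thm-known} (crystallinity of $\eta_{\ell_0,i}$ above $\ell_0$) is not automatic, since $\eta_{\ell_0,i}$ is only a finite twist of a subrepresentation; you need the twist adjustment of Lemma \ref{lem-fix} (harmless, as it preserves finite projective equivalence). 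Third, your invocation of Theorem \ref{thm-try} with the auxiliary field $L_\ell$ in the role of "$K$" is not literally valid, because that theorem (and the construction of the density-one set $\CL$) requires one field working for the whole system, while your $L_\ell$ depends on $\ell$; "discarding finitely many primes" does not address this. The paper avoids the issue by keeping the fixed $K$ of the hypothesis throughout — noting that $\CL$ depends only on the Scholl system and $K$, not on the decomposition or the twists — and only passing to the composite $K'$ of the trivializing abelian extensions $K_i$ and $F'$ at the very last step, where solvability of $K'/F'$ is what is actually needed.
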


Notice that the representations $\eta_{\ell, i}$ in (2) above are by no means unique. For instance, one may twist $\eta_{\ell, i}$ by finite characters of  $G_F$. Indeed, the following Lemma assures that by so doing we may assume that the  representations $\eta_{\ell, i}$ in (2) are crystalline at each prime $v$ of $F$ above $\ell$ as long as $\rho_\ell |_{G_{\Q_\ell}}$ is crystalline.

\begin{lemma}\label{lem-fix} For each $\eta_{\ell, i}$ in Proposition \ref{prop-save} with $\ell$ large enough so that $\rho_\ell |_{G_{\Q_\ell}}$ is crystalline,
there exists a finite character $\chi_{\ell,i}$ of $G_F$ so that $\chi_{\ell,i} \otimes \eta_{\ell , i}|_{G_{F_v}}$ is crystalline at each prime $v$ of $F$ above $\ell$.
\end{lemma}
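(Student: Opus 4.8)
The plan is to reduce the statement to the fact that a geometric character with Hodge--Tate weight $0$ at every prime above $\ell$ becomes crystalline after a finite twist, which is a local question at the primes dividing $\ell$. First I would consider the determinant: by Proposition \ref{prop-purity} we know $\det \eta_{\ell,i}$ need not itself be crystalline, but since $\rho_\ell|_{G_{\Q_\ell}}$ is crystalline, for each prime $v \mid \ell$ of $F$ the restriction $\eta_{\ell,i}|_{G_{F_v}}$ is a two-dimensional subquotient (up to the finite projective twist built into its definition) of the crystalline representation $\rho_\ell|_{G_{F_v}}$; in particular $\eta_{\ell,i}|_{G_{F_v}}$ is de Rham. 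A two-dimensional de Rham representation of $G_{F_v}$ with $F$ unramified at $\ell$ and Hodge--Tate weights in a small enough range (which holds for $\ell$ large by condition (iii) and the bound $\ell-1 > 6\kappa$ defining $\CL$) is potentially crystalline, and more precisely it becomes crystalline after restriction to $G_{F_v'}$ for a finite \emph{abelian} (even tamely ramified) extension $F_v'/F_v$.

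The key step is then to package these local abelian twists into a single global finite-order character. The local obstruction at $v \mid \ell$ to $\eta_{\ell,i}|_{G_{F_v}}$ being crystalline is measured by the restriction of $\eta_{\ell,i}$ to the inertia subgroup $I_v$ acting on the associated Weil--Deligne / filtered module; because the Hodge--Tate weights are already correct (namely $\{0, -\kappa+1\}$ by Proposition \ref{prop-purity}) and $F_v/\Q_\ell$ is unramified, the failure of crystallinity is detected by a finite-order character $\psi_v$ of $I_v$ of order prime to $\ell$, via Fontaine--Laffaille theory (exactly as used in the proof of Proposition \ref{prop-residue-irreducible}). I would then invoke class field theory: the collection $\{\psi_v\}_{v\mid\ell}$, together with the trivial character at all other places, determines a character of the id\`ele class group of $F$ of finite order, and hence a finite-order Hecke character $\chi_{\ell,i}$ of $G_F$ whose restriction to each $I_v$ with $v\mid\ell$ is $\psi_v^{-1}$. (One has freedom in the choice of $\chi_{\ell,i}$ away from $\ell$, which does not affect crystallinity at $v\mid\ell$.) By construction $\chi_{\ell,i}\otimes\eta_{\ell,i}$ is then unramified, hence crystalline, on inertia at each $v\mid\ell$; since it remains de Rham there and its Hodge--Tate weights are unchanged, it is crystalline at each $v \mid \ell$, as desired.

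The main obstacle I anticipate is making precise the claim that the obstruction to crystallinity of a two-dimensional de Rham representation of $G_{F_v}$ (with $F_v/\Q_\ell$ unramified and small Hodge--Tate weights) is exactly a character of $I_v$ of prime-to-$\ell$ order that can be killed by an unramified-after-twist argument — i.e. ruling out that a \emph{wildly} ramified or genuinely non-abelian twist is needed. This is where the hypothesis ``$\ell$ large enough so that $\rho_\ell|_{G_{\Q_\ell}}$ is crystalline'' together with $\ell \in \CL$ (so $\ell$ unramified in $F$ and $\ell - 1 > 6\kappa$) does the work: in the Fontaine--Laffaille range a crystalline lattice has semisimple reduction described by fundamental characters, and any subquotient inherits such a description, forcing the relevant inertial type to be a sum of tame characters; twisting by a global finite character to trivialize the tame part is then routine class field theory. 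A secondary technical point is the finite projective ambiguity in the definition of $\eta_{\ell,i}$ coming from Clifford theory in Proposition \ref{prop-save}(2): since the projective equivalence is by a \emph{finite} character, it only changes the putative $\chi_{\ell,i}$ by another finite character and can be absorbed, so it causes no real difficulty.
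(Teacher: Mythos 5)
There is a genuine gap, and it sits exactly where you flagged your ``main obstacle.'' Your pivotal local claim --- that for $v\mid\ell$ the failure of crystallinity of $\eta_{\ell,i}|_{G_{F_v}}$ is measured by a finite tame character of $I_v$, so that a character twist of $G_{F_v}$ (and hence, after gluing by class field theory, a finite character of $G_F$) restores crystallinity --- is not justified by the argument you give. First, the general assertion that a $2$-dimensional de Rham representation of $G_{F_v}$ with $F_v/\Q_\ell$ unramified and small Hodge--Tate weights becomes crystalline over a finite \emph{abelian} (tame) extension is false: semistable non-crystalline representations (nonzero monodromy $N$) are de Rham with weights $\{0,1-\k\}$ and are not crystalline after \emph{any} twist or finite restriction, and potentially crystalline representations can have dihedral or primitive inertial types that no character twist of $G_{F_v}$ can trivialize. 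What the actual hypotheses give you is only this: since $\eta_{\ell,i}|_{G_K}\simeq \chi\otimes\sigma_{\ell,i}$ with $\chi$ finite and $\sigma_{\ell,i}$ a summand of the crystalline $\rho_\ell|_{G_K}$, the representation $\eta_{\ell,i}|_{G_{F_v}}$ is potentially crystalline (so $N=0$) and its Weil--Deligne inertial type is \emph{scalar on the finite-index subgroup} $I_{K_w}\subset I_{F_v}$. The whole content of the lemma is to promote this to scalarity on all of $I_{F_v}$ (equivalently, that a single character of $G_{F_v}$ kills the type), and your Fontaine--Laffaille argument does not do that: Fontaine--Laffaille theory describes crystalline lattices in the FL weight range and their mod-$\ell$ reductions, and tameness of $\bar\eta_{\ell,i}|_{I_v}$ says nothing about the characteristic-zero inertial type of a representation that is not yet known to be crystalline (crystalline representations themselves have infinite inertia image but tame reductions). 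So the step ``forcing the relevant inertial type to be a sum of tame characters'' is exactly the unproved point.

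For comparison, the paper's proof does not attempt a local analysis at all: it observes that $\eta_{\ell,i}\otimes\eta_{\ell,i}^\vee$ restricted to $G_K$ is $\sigma_{\ell,i}\otimes\sigma_{\ell,i}^\vee$ (the finite projective twist cancels), hence crystalline at primes above large $\ell$, and then invokes Proposition 3.3.4 of \cite{LY16} applied to this tensor product to produce the global finite character $\chi_{\ell,i}$ with $\chi_{\ell,i}\otimes\eta_{\ell,i}$ crystalline at all $v\mid\ell$. In other words, the descent-of-scalarity problem you would need to solve is precisely what is outsourced to that cited result. Your class-field-theoretic gluing of prescribed finite-order characters on $I_v$ for the finitely many $v\mid\ell$ is fine as far as it goes, but without a correct argument that the local obstruction at each $v\mid\ell$ is indeed a single character of $I_{F_v}$ extending to $G_{F_v}$, the proposal does not prove the lemma; either supply that argument (which in effect means reproving the relevant part of \cite{LY16}) or cite it as the paper does.
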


\begin{proof}Denote by $V_1$ the ambient space of $\eta_{\ell, i}$ and $V_2$ its dual. Then $(V_1 \otimes V_2)|_{G_K}\simeq \sigma_{\ell, i} \otimes (\sigma_{\ell, i})^\vee  $ is crystalline at all primes of $K$ above large $\ell$.
Now applying Proposition 3.3.4 in \cite{LY16} to $V_1 \otimes V_2$,  we see the existence of  a character $\chi_{\ell,i}$ of $G_F$ with finite image so that $\chi_{\ell,i}\otimes \eta_{\ell , i }$ is crystalline at all primes $v$ of $F$ above $\ell$.
\end{proof}}

Now we proceed to prove Proposition \ref{prop-save}.

\begin{proof} We follow the same idea as the proof of Theorem \ref{thm-try}. Note first that the set $\CL$ of primes constructed in the previous subsection is independent of the decomposition of $\rho_\ell$. {Choose a large $\ell \in \CL$ so that Lemma \ref{lem-fix} holds. Then we may assume that $\eta_{\ell, i}$ in (2) are crystalline at the primes $v$ of $F$ above $\ell$.} Let $J_\ell$ be the set of $i$'s such that $\eta_{\ell, i }$ is strongly irreducible. Then for $i \in J_\ell$ we know that $\sym^2 \bar \eta_{\ell, i }|_{G_{F(\zeta_\ell)}} $ is irreducible by Proposition \ref{prop-residue-irreducible}. Hence Theorem \ref{thm-known} (2) applies to $\eta_{\ell, i}$ for $i \in J_\ell$ and we obtain a finite totally real Galois extension $F'= F'(\ell)$ over $F$ such that $\eta_{ \ell, i }|_{G_{F'}}$ is automorphic for $ i \in J_\ell$. For $i \not \in J_\ell$, we see that $\eta_{\ell, i }$ is induced from a character of the Galois group of a quadratic CM extension of $F$ by Proposition \ref {prop-auto}. It is clear that $\eta_{\ell, i}|_{G_{F'}}$ is also an induction of a character of the Galois group of a quadratic CM extension of $F'$. Thus $\eta_{\ell, i }|_{G_{F'}}$ is automorphic. Since $\eta_{\ell , i}|_{G_K}$ is finitely  projectively equivalent to $\sigma_{\ell , i}$, there is a character  $\chi_i$ of $G_K$ so that $\eta_{\ell, i}|_{G_K} \simeq \chi_i \otimes \sigma_{\ell, i}$ for each $i$. In particular, there exists a finite abelian extension $K_i/K$ so that
$\eta_{\ell , i}|_{G_{K_i}}\simeq \sigma_{\ell , i}|_{G_{K_i}}. $ Let $K'$ be the composite of all $K_i$ and $F'$. Then we see that $K'/F'$ is a finite solvable extension. Thus we conclude that $\sigma_{\ell , i }|_{G_{K'}} = \eta_{\ell , i}|_{G_{K'}}$ is automorphic.
This proves that $\rho_\ell |_{G_{K'}}$
is automorphic for one and hence all $\ell$ as ${\rho_\ell}$ is a compatible system.
\end{proof}

 An immediate consequence of Proposition \ref{prop-save} and Theorem \ref{thm-try} is

\begin{co}\label{co-save} Let $\{\rho_\ell \}$ be a compatible system of Scholl representations of $G_\Q$. Suppose there exists a totally real field $F$ so that for each $\ell$ we have  $\rho_\ell|_{G_F} \simeq \bigoplus _{i =1} ^d \eta_{\ell, i}$ with $ \eta_{\ell, i}$ degree-$2$ representations of $G_F$. Then $\rho_\ell$ is potentially automorphic for all $\ell$. Moreover, if $F = \Q$, then all $\rho_\ell$ are automorphic.
\end{co}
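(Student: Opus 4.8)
The plan is to derive both assertions from Proposition \ref{prop-save} and Theorem \ref{thm-try}, applied with the auxiliary field $K$ taken to equal the base field $F$. For the potential automorphy statement I would invoke Proposition \ref{prop-save} with $K:=F$. Since $F$ is trivially a finite solvable extension of the totally real field $F$, one only needs to check the two numbered hypotheses of that proposition: condition~(1) is precisely the given decomposition $\rho_\ell|_{G_F}\simeq\bigoplus_{i=1}^d\eta_{\ell,i}$, with the role of $\sigma_{\ell,i}$ played by $\eta_{\ell,i}$ itself, and condition~(2) holds with that same $\eta_{\ell,i}$ because its restriction to $G_K=G_F$ is literally $\sigma_{\ell,i}$ and hence finitely projectively equivalent to it via the trivial character. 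Proposition \ref{prop-save} then yields that $\rho_\ell$ is potentially automorphic for every prime $\ell$.

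For the case $F=\Q$, I would observe that each summand $\eta_{\ell,i}$ is a $2$-dimensional subrepresentation of the Scholl representation $\rho_\ell$ of $G_\Q$, so Theorem \ref{thm-try} applies with $F=K=\Q$; by Proposition \ref{prop-purity} each $\eta_{\ell,i}$ is moreover absolutely irreducible. Theorem \ref{thm-try} then supplies a set $\CL$ of rational primes of Dirichlet density $1$ such that, for every $\ell\in\CL$ and every $i$, the representation $\eta_{\ell,i}$ is automorphic (this is the ``$F=\Q$'' clause of that theorem, which covers both the strongly irreducible and the potentially reducible cases). Consequently, for $\ell\in\CL$, $\rho_\ell\simeq\bigoplus_{i=1}^d\eta_{\ell,i}$ is an isobaric sum of cuspidal automorphic representations and is therefore itself automorphic.

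The remaining point---upgrading automorphy from a density-$1$ set of primes to all primes---is where I expect the only genuine work to lie. Fixing some $\ell_0\in\CL$, each $\eta_{\ell_0,i}$ is modular of weight $\kappa\ge2$, either through a classical newform or, in the potentially reducible case produced by Proposition \ref{prop-auto}, through a Hecke character of a quadratic CM field; in either situation it lies in a compatible system $\{\rho_{f_i,\lambda}\}_\lambda$ whose member above $\ell_0$ is $\eta_{\ell_0,i}$. Then $\{\bigoplus_{i=1}^d\rho_{f_i,\lambda}\}_\lambda$ is a compatible system whose Frobenius characteristic polynomials at almost all primes agree with those of the given system $\{\rho_\ell\}$, because this holds at $\ell_0$, where $\rho_{\ell_0}\simeq\bigoplus_{i=1}^d\eta_{\ell_0,i}$. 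Since a semisimple $\ell$-adic representation is determined up to isomorphism by the characteristic polynomials of Frobenius on a set of primes of Dirichlet density $1$ (Chebotarev together with Brauer--Nesbitt), I would conclude that $\rho_\ell\simeq\bigoplus_{i=1}^d\rho_{f_i,\ell}$ for every prime $\ell$, so that $\rho_\ell$ is automorphic for all $\ell$. The only ingredient that must be treated with care here is the existence of these compatible systems attached to the individual $\eta_{\ell_0,i}$, which is classical for holomorphic newforms of weight $\kappa\ge2$ and for algebraic Hecke characters, so no new input is needed.
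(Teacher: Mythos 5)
Your proposal is correct and follows essentially the same route as the paper, which derives the corollary directly from Proposition \ref{prop-save} (applied with $K=F$) together with the $F=\Q$ clause of Theorem \ref{thm-try}. Your final step of spreading automorphy from one $\ell\in\CL$ to all $\ell$ via the compatible systems attached to the newforms or Hecke characters, combined with Chebotarev and Brauer--Nesbitt, is just a spelled-out version of the paper's own ``automorphic for one and hence all $\ell$ as $\{\rho_\ell\}$ is a compatible system'' argument used at the end of the proof of Proposition \ref{prop-save}.
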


\section{Potentially $\GL_1$ or $\GL_2$-type Scholl representations }\label{sec:GL1&2}
Throughout  this section, $\{ \rho_\ell\}$ denotes a system of compatible $2d$-dimensional Scholl representations of $G_\Q$ associated to a $d$-dimensional space of weight-$\k$ cusp forms of a finite-index subgroup of $\SL$. We assume that \emph{$\rho_\ell$ are   absolutely irreducible  for all $\ell$} in this section.  By  Theorem \ref{thm:Clifford} and  Lemma 5.3.1 of \cite{BGGT} (1), there is a finite Galois extension $L/\Q$ such that, for each $\ell$,
 the restriction $\rho_\ell |_{G_L}$ decomposes as
\begin{equation}\label{eq:splits}
\rho_\ell |_{G_L} \simeq \sigma_{\ell, 1} \oplus  \cdots \oplus \sigma_{\ell, m(\ell)},
 \end{equation}
where $\sigma_{\ell, i}$  are strongly irreducible and conjugate to each other under $\Gal(L/\Q)$, hence they are of the same dimension.

 \begin{remark} \label{rmk-independence} We conjecture that, with $L$ fixed, the number $m(\ell)$ and hence $\dim_{\Qlbar} \sigma_{\ell , i}$ are independent of $\ell$.
\end{remark}

{ In the remainder of this section,  we assume that  $\dim_{\Qlbar} \sigma_{\ell,1}$ is independent of $\ell$, and discuss the case $\dim_{\Qlbar} \sigma_{\ell,1}= 1 $ or $2$.}

\subsection{Scholl representations of $\GL_1$-type}  Consider first the case that $\dim_{\Qlbar} \sigma_{\ell,1}= 1 $.  By Theorem \ref{thm:Clifford},  $\rho_\ell \simeq \Ind ^{G_\Q}_{G_M} (\eta_\ell \otimes \gamma_\ell)$ for some subfield {$M := M(\ell)$} of $L$ depending on $\ell$ a priori. Here $\eta_\ell$ is 1-dimensional and $\gamma _\ell$ is a representation with finite image.

Let $N$ be the splitting field of $\gamma_\ell$. Since $\eta_\ell$ is geometric,  it is  automorphic. Hence $\rho_\ell|_{G_N}$ is automorphic and then $\rho_\ell$ is potentially automorphic. This proves

\begin{theorem}\label{thm:GL_1}Suppose that the Scholl representation $\rho_\ell$ is potentially of $\GL_1$-type for one prime $\ell$. Then $\rho_\ell$ is potentially automorphic for all $\ell$.
\end{theorem}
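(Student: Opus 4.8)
The plan is to reduce the statement to the well-known automorphy of geometric characters of number fields, after first propagating the hypothesis from one prime to all primes.

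First I would record that, under the standing assumption of Remark~\ref{rmk-independence} that $\dim_{\Qlbar}\sigma_{\ell,1}$ is independent of $\ell$, it suffices to prove the assertion for a single (arbitrary) prime. The point is that $\rho_\ell$ is potentially of $\GL_1$-type precisely when the strongly irreducible constituents $\sigma_{\ell,i}$ of \eqref{eq:splits} are $1$-dimensional. One implication is immediate from \eqref{eq:splits}. For the converse, if $\rho_\ell|_{G_{L'}}$ is a direct sum of characters for some finite Galois $L'/\Q$, then over $G_{LL'}$ the representation $\rho_\ell|_{G_{LL'}}$ is, on the one hand, a sum of characters and, on the other hand, equal to $\bigoplus_i \sigma_{\ell,i}|_{G_{LL'}}$ with each $\sigma_{\ell,i}|_{G_{LL'}}$ still irreducible (the $\sigma_{\ell,i}$ being strongly irreducible over $G_L$); uniqueness of the decomposition into irreducibles then forces $\dim_{\Qlbar}\sigma_{\ell,i}=1$. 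Hence if $\rho_{\ell_0}$ is potentially of $\GL_1$-type for one prime $\ell_0$, the common dimension of the $\sigma_{\ell,i}$ equals $1$ for every $\ell$, so $\rho_\ell$ is potentially of $\GL_1$-type for all $\ell$, and we may now fix an arbitrary prime $\ell$.

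Next I would produce the character decomposition and check geometricity. By the equivalence above, $\rho_\ell|_{G_L}$ is a direct sum of $2d$ characters of $G_L$, where $L/\Q$ is the finite Galois extension of \eqref{eq:splits}; equivalently, Theorem~\ref{thm:Clifford} applied to the absolutely irreducible $\rho_\ell$ gives $\rho_\ell\simeq\Ind_{G_M}^{G_\Q}(\eta_\ell\otimes\gamma_\ell)$ for a subfield $M=M(\ell)\subseteq L$, a character $\eta_\ell$ of $G_M$ and a finite-image representation $\gamma_\ell$ of $G_M$, and restricting further to a finite Galois extension $N\supseteq L$ on which $\gamma_\ell$ and all its $G_\Q$-conjugates become trivial recovers the same picture. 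In any case these characters are direct summands of the restriction of the geometric representation $\rho_\ell$; since being de Rham and being unramified almost everywhere pass to direct summands, each of them is a geometric character.

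Finally I would invoke that a geometric character of the absolute Galois group of a number field is locally algebraic, hence corresponds to an algebraic Hecke character, which is an automorphic representation of $\GL_1$ over that field — precisely the input used in the proof of Proposition~\ref{prop-auto}. Therefore $\rho_\ell|_{G_L}$, being the isobaric sum of these automorphic characters, is an automorphic representation of $\GL_{2d}(\mathbb A_L)$; that is, $\rho_\ell|_{G_L}$ is automorphic in the sense of Definition~\ref{def:automorphic}, so $\rho_\ell$ is potentially automorphic, and since $\ell$ was arbitrary the theorem follows. I do not expect a serious obstacle in this argument — it is Clifford theory plus the classical automorphy of algebraic Hecke characters — the only delicate point being the passage from one prime to all primes, which as explained uses the standing independence assumption of Remark~\ref{rmk-independence}. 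Should one wish to avoid that assumption, the alternative is a compatibility argument: the automorphy of $\rho_{\ell_0}|_{G_L}$ exhibits it as the $\ell_0$-adic member of the compatible system attached to the isobaric sum of the relevant algebraic Hecke characters, and comparing Frobenius characteristic polynomials over a set of places of Dirichlet density $1$ (Chebotarev, together with semisimplicity of the $\rho_\ell$) then identifies $\rho_\ell|_{G_L}$ with the $\ell$-adic member of that same system for every $\ell$.
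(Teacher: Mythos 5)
Your proposal is correct and follows essentially the same route as the paper: decompose $\rho_\ell$ via Theorem~\ref{thm:Clifford} as $\Ind_{G_M}^{G_\Q}(\eta_\ell\otimes\gamma_\ell)$ with $\eta_\ell$ a geometric character and $\gamma_\ell$ of finite image, trivialize $\gamma_\ell$ over a finite extension, and invoke the automorphy of geometric (algebraic Hecke) characters to get automorphy of the restriction, hence potential automorphy. Your additional care about propagating the $\GL_1$-type hypothesis from one prime to all primes is a point the paper handles only implicitly through the standing assumption that $\dim_{\Qlbar}\sigma_{\ell,1}$ is independent of $\ell$, and your argument there is sound.
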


In general, it is difficult to prove that $\rho_\ell$ is automorphic without further information on $M$ and $\gamma_\ell$.

 \begin{remark} The maximal CM subfield $M_0$ of $M$ can not be totally real. In particular, $M$ can not be $\Q$. Indeed, suppose that $M_0$ is totally real. Then we see from  Lemma \ref{lem:HT} (3) that $\HT_g (\eta_\ell)$ is independent of $g \in \t{Hom}_{\Q} (M , \Qlbar)$. Consequently $ \HT_\tau (\sigma_{\ell, 1}) $  is independent of $\tau \in \t{Hom} _{\Q} (L, \Qlbar)$ (note that $L/\Q$ is Galois). So is $\HT_\tau (\sigma^g_{\ell,1}) $ by Lemma \ref{lem:HT} (2). Then $\rho_\ell|_{G_L} \simeq \sigma_{\ell,1} \oplus \cdots \oplus \sigma_{\ell, m(\ell)}$ has only one Hodge-Tate weight independent of $\tau \in \t{Hom} _{\Q} (L, \Qlbar)$,  contradicting  the fact that $\rho_\ell$ has two distinct Hodge-Tate weights, $0$ and $-\k+1$.
\end{remark}

\subsection{Scholl representations of $\GL_2$-type}

Next we consider the situation that $\dim _{\Qlbar} \sigma_{\ell,1} =2 $ for all $\ell$.  Combining   Theorem \ref{thm:Clifford}  and Proposition \ref{prop-purity}, we have the following result.

\begin{prop} \label{prop:2-splits} Under the notation and assumptions of this section, for each $\ell$ there exist a subfield $M = M(\ell)$ of $L$ and Galois representations $\gamma_\ell : G_M \to \t{GL}_d (\Qlbar)$ and $\eta_\ell : G_M \to \t{GL}_2 (\Qlbar)$ such that
\begin{enumerate}

\item $\gamma_\ell$ has finite image and $\eta_\ell |_{G_L}$ is projectively equivalent to $\sigma_{\ell,1}$;
\item $\rho_\ell \simeq \Ind_{G_M}^{G_\Q} (\gamma_\ell \otimes \eta _\ell)$;
\item  If $M$ is totally real, then for each embedding $\tau: M \inj \overline \Q_{\ell}$ we have  $\t{HT}_\tau (\eta_\ell)= \{0 , 1-\k\}$.

\end{enumerate}
\end{prop}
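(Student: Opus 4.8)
The plan is to derive all three assertions directly from Theorem \ref{thm:Clifford} applied to the irreducible Scholl representation $\rho_\ell$ and the finite Galois extension $L/\Q$, together with the purity information of Proposition \ref{prop-purity}. First I would invoke Theorem \ref{thm:Clifford}: since $\rho_\ell$ is irreducible and $\rho_\ell|_{G_L} = \sigma_{\ell,1}\oplus\cdots\oplus\sigma_{\ell,m(\ell)}$ with all $\sigma_{\ell,i}$ strongly irreducible of dimension $2$, writing $\sigma = \sigma_{\ell,1}$ and $H = \{g\in G_\Q : \sigma^g\simeq\sigma\}$, $M = M(\ell) := \overline\Q^H$, we immediately get representations $\eta_\ell : G_M\to\GL_2(\Qlbar)$ and $\gamma_\ell : G_M\to\GL_d(\Qlbar)$ with $\gamma_\ell$ of finite image, $\eta_\ell|_{G_L}$ finitely projectively equivalent to $\sigma$ (in particular projectively equivalent to it, giving (1)), and $\rho_\ell\simeq\Ind_{G_M}^{G_\Q}(\gamma_\ell\otimes\eta_\ell)$, giving (2). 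Here $s = d$ because the rank of $\gamma_\ell$ must equal the number of non-isomorphic summands isomorphic to $\sigma$ that assemble into the $H$-stable subspace $W$, which after inducing back up to $G_\Q$ recovers the $2d$-dimensional $\rho_\ell$; more precisely $[{\Q}:M]\cdot 2 d_{\gamma_\ell} \cdot 2 / 2 $... the cleanest argument is the dimension count $2d = \dim\rho_\ell = [G_\Q:G_M]\cdot 2\cdot\dim\gamma_\ell$ combined with $[G_\Q:G_M] = m(\ell)/\dim\gamma_\ell$ from part (1) of Theorem \ref{thm:Clifford}, forcing $\dim\gamma_\ell = d/[G_\Q:G_M]$ — so strictly speaking $\gamma_\ell$ lands in $\GL_{d/[M:\Q]}$, but one can extend by zero / pad, or simply state it maps to $\GL_d$ which certainly contains $\GL_{d/[M:\Q]}$; I would present it as $\gamma_\ell : G_M \to \GL_{d/[M:\Q]}(\Qlbar)\subset\GL_d(\Qlbar)$ to be safe.

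For part (3), suppose $M$ is totally real. The key point is that $\eta_\ell|_{G_L}$ is projectively equivalent to $\sigma_{\ell,1}$, which is a $2$-dimensional subrepresentation of $\rho_\ell|_{G_L}$, a Scholl representation of $G_\Q$ restricted to $L$. Thus $\sigma_{\ell,1}$ is a subrepresentation of $\rho_\ell|_{G_L}$, and Proposition \ref{prop-purity} applies with $F$ replaced by $M$ (which is totally real) and $K$ replaced by $L$: $\eta_\ell$ twisted by a character — but here is the subtlety, Proposition \ref{prop-purity} needs $\eta_\ell|_{G_L}$ itself, not just projectively, to be a subrepresentation of $\rho_\ell|_{G_L}$. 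Since $\eta_\ell|_{G_L} \simeq \chi\otimes\sigma_{\ell,1}$ for a finite-order character $\chi$ of $G_L$, the Hodge--Tate weights of $\eta_\ell|_{G_L}$ at any embedding $\tau$ agree with those of $\sigma_{\ell,1}$ (a finite-order character contributes weight $0$). So I would instead argue: $\HT_\tau(\eta_\ell) = \HT_{\tau'}(\eta_\ell|_{G_L}) = \HT_{\tau'}(\sigma_{\ell,1})$ for $\tau'$ extending $\tau$, and $\HT_{\tau'}(\sigma_{\ell,1})\subset\HT_{\tau'}(\rho_\ell|_{G_L}) = \{0,\dots,0,1-\k,\dots,1-\k\}$ by Lemma \ref{lem:HT}(1)-type reasoning, so $\HT_\tau(\eta_\ell)$ is one of $\{0,0\}$, $\{1-\k,1-\k\}$, $\{0,1-\k\}$. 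Then, exactly as in the proof of Proposition \ref{prop-purity}, I use that $M$ is totally real together with Lemma \ref{lem:HT}(3) applied to $\det\eta_\ell = \epsilon_\ell^r\chi'$: the weight $r$ of the determinant (equal to the sum of the two HT weights) is forced to be independent of $\tau$, so it is constant, and then the purity/absolute-value argument at Frobenius elements — using that $\sigma_{\ell,1}$ inherits the Weil bound $q^{(\k-1)/2}$ from $\rho_\ell$, so $\det\eta_\ell(\Frob_\gp)$ has absolute value $q^{\k-1}$ times a root of unity — pins down $r = 1-\k$, hence $\HT_\tau(\eta_\ell) = \{0,1-\k\}$ for all $\tau$. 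I would remark that Proposition \ref{prop-purity}'s irreducibility conclusion is automatic here since $\sigma_{\ell,1}$, and hence $\eta_\ell$, is (strongly) irreducible by construction.

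The main obstacle I anticipate is bookkeeping around the field $M$ and the precise target of $\gamma_\ell$: one must be careful that $M = M(\ell)$ genuinely depends on $\ell$ a priori (Remark \ref{rmk-independence} flags that its independence is only conjectural), and that the dimension $\dim\gamma_\ell = d/[M:\Q]$ is an integer — this is forced by part (1) of Theorem \ref{thm:Clifford} but deserves an explicit sentence. A secondary subtlety is that Proposition \ref{prop-purity} as stated requires a literal subrepresentation, so I would not cite it verbatim for (3) but rather replay its short argument with $\eta_\ell$, using only that $\eta_\ell|_{G_L}$ differs from a genuine subrepresentation $\sigma_{\ell,1}$ of $\rho_\ell|_{G_L}$ by a finite twist, which changes neither Hodge--Tate weights nor complex absolute values of Frobenius eigenvalues. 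Everything else is a direct transcription of Theorem \ref{thm:Clifford} and the purity computation already carried out in Proposition \ref{prop-purity}.
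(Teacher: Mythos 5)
Your proposal is correct and follows essentially the same route as the paper, whose proof is precisely the one-line combination of Theorem \ref{thm:Clifford} (for (1) and (2), with $M=\overline\Q^H$ for $H=\{g\in G_\Q : \sigma_{\ell,1}^g\simeq\sigma_{\ell,1}\}$) with the purity argument of Proposition \ref{prop-purity} (for (3)). Your two refinements are sound and welcome: the rank of $\gamma_\ell$ is indeed $d/[M:\Q]$ rather than literally $d$, and for (3) the cleanest fix for the ``finitely projectively equivalent'' issue is to pass to the fixed field of the kernel of the twisting character, over which $\eta_\ell$ becomes an honest subrepresentation of $\rho_\ell$, so Proposition \ref{prop-purity} applies verbatim with $F=M$ totally real.
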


\begin{remark}\label{rmk-cry-selection} The representations $\gamma_\ell$ and $\eta_\ell$ in the above Corollary are not unique since they may be respectively replaced by $\gamma_\ell \otimes \psi^{-1}$ and $\eta _\ell \otimes \psi$ for any finite character $\psi$ of {$G_M$}. As in the proof of Lemma \ref{lem-fix}, Proposition 3.3.4 in \cite{LY16} implies that,  when $\rho_\ell|_{G_{\Q_\ell}}$ is crystalline, there exists a finite character $\psi$ of $G_M$ so that, after replacing $\eta_\ell$ by $\eta_\ell \otimes \psi$, we have $\eta_\ell |_{G_{M_\q}}$ crystalline at every prime $\q$ of $M$ dividing $\ell$.
\end{remark}

\begin{theorem}\label{thm:main}
Keep the same assumptions and notation as in this subsection.
Assume  further that  there is a finite set of primes $S$ of $L$ so that at each prime $\gp$ of $L$ outside $S$, the characteristic polynomial of $\sigma_{\ell,1}(\Frob_\gp)$ is independent of the primes $\ell$ not divisible by $\gp$.
Then the following statements hold:
\begin{enumerate}
\item The field $M(\ell)$ is independent of $\ell$; denote it by $M$.
\item Assume that $M$ is totally real and $L$ is solvable over $M$, then $\rho_\ell$ is potentially automorphic.\end{enumerate}
\end{theorem}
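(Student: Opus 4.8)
The plan is to exploit the hypothesis that the characteristic polynomials of $\sigma_{\ell,1}(\Frob_\gp)$ are independent of $\ell$ in order to pin down the field $M(\ell)$ and then to invoke the machinery already developed, namely Proposition \ref{prop-save} (and its crystallinity cleanup, Lemma \ref{lem-fix}), applied to the representations $\eta_\ell$ of $G_M$. First, for part (1), I would argue that $M(\ell)$ is determined as a subfield of the fixed field $L$ purely by the ``inertial'' or ``monodromy'' data of the compatible system $\{\sigma_{\ell,1}|_{G_L}\}$: by Theorem \ref{thm:Clifford}, $M(\ell)=\overline\Q^{H(\ell)}$ where $H(\ell)=\{g\in G_\Q \mid \sigma_{\ell,1}^g\simeq\sigma_{\ell,1}\}$, and $\sigma_{\ell,1}^g\simeq\sigma_{\ell,1}$ is detected, by Chebotarev together with the compatibility hypothesis, by equality of the relevant characteristic polynomials at Frobenius elements — which is then $\ell$-independent. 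Concretely, since the $\sigma_{\ell,i}$ are the $\Gal(L/\Q)$-conjugates of $\sigma_{\ell,1}$ and the traces of $\sigma_{\ell,1}|_{G_L}(\Frob_\gp)$ do not depend on $\ell$ (being the coefficients of the $\ell$-independent characteristic polynomials), the partition of $\Gal(L/\Q)$ into cosets $g$ with $\sigma_{\ell,1}^g\simeq\sigma_{\ell,1}$ is the same for all $\ell$; hence $H(\ell)$ and therefore $M(\ell)$ are independent of $\ell$. This also forces $m(\ell)=[M:\Q]$ to be $\ell$-independent, so the representations $\eta_\ell,\gamma_\ell$ of $G_M$ produced by Proposition \ref{prop:2-splits} sit in a uniform framework.

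For part (2), assume $M$ is totally real and $L/M$ is solvable. I would first set up a compatible system structure on $\{\eta_\ell|_{G_L}\}$: since $\eta_\ell|_{G_L}$ is projectively equivalent to $\sigma_{\ell,1}$, the trace of $\eta_\ell|_{G_L}(\Frob_\gp)$ equals that of $\sigma_{\ell,1}(\Frob_\gp)$ up to multiplication by a finite character value, so after a harmless twist the characteristic polynomials at Frobenius elements of $G_L$ are $\ell$-independent. Next, I would view $\rho_\ell \simeq \Ind_{G_M}^{G_\Q}(\gamma_\ell\otimes\eta_\ell)$ and note that $\rho_\ell|_{G_M}\simeq \bigoplus_{j}(\gamma_\ell\otimes\eta_\ell)^{g_j}$ over coset representatives $g_j$; each summand, after further restriction to $G_N$ where $N$ is the splitting field of the finite-image $\gamma_\ell$ (and all its conjugates) composed with $L$, becomes a direct sum of copies of $\eta_\ell$-conjugates, each $2$-dimensional. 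In other words, $\rho_\ell|_{G_{N}}$ decomposes as a sum of $2$-dimensional pieces, each finitely projectively equivalent to a conjugate of $\eta_\ell$, and each such conjugate, being of the form $\eta_\ell^{g}$ for $g\in G_\Q$, is a $2$-dimensional representation of $G_{M^g}$ with $M^g$ a conjugate totally real field; applying Proposition \ref{prop-save} with the base field replaced by (the Galois closure over $\Q$ of) $M$ and the intermediate solvable field taken to be $N$ over that, together with Lemma \ref{lem-fix} to arrange crystallinity at $\ell$, yields potential automorphy of $\rho_\ell$ for $\ell$ in the Dirichlet-density-one set $\CL$; since $\{\rho_\ell\}$ is a compatible system and potential automorphy of one member pins down the automorphic target, it holds for all $\ell$.

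The main obstacle I anticipate is the bookkeeping needed to reduce cleanly to the hypotheses of Proposition \ref{prop-save}: that proposition is stated for a totally real base field $F$ with $\rho_\ell|_{G_K}$ splitting into $2$-dimensional pieces each finitely projectively equivalent to a $2$-dimensional representation of $G_F$, whereas here the natural ``$F$'' is $M$ which may not be Galois over $\Q$, and the $2$-dimensional constituents of $\rho_\ell$ restricted to a common splitting field are $\eta_\ell$-conjugates living over various conjugate fields $M^g$. One must therefore either enlarge $M$ to its Galois closure (still totally real, since $M$ is totally real) and track how $\eta_\ell$ and its conjugates behave — using Lemma \ref{lem:HT}(2) to see that the Hodge-Tate weights of $\eta_\ell^g$ are still $\{0,1-\k\}$ for the appropriate embeddings — or re-run the argument of Proposition \ref{prop-save} directly in this slightly more general setting. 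The crystallinity at $\ell$ of the conjugates $\eta_\ell^g$ follows from that of $\eta_\ell$ (Lemma \ref{lem-fix}, Remark \ref{rmk-cry-selection}) since conjugation permutes the primes above $\ell$, and $L/M$ solvable plus $M/\Q$ having solvable-over-its-Galois-closure base change (trivially true) ensures the solvable base change theorem of \cite{AC89} transfers automorphy back down from the big field to $\Q$.
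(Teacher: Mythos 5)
Your argument for part (1) is essentially the paper's: the $\sigma_{\ell,i}$ are irreducible and $2$-dimensional, hence determined by their traces, and the $\ell$-independence of the characteristic polynomials at $\Frob_\gp$ together with Chebotarev shows that the stabilizer $\{g\in G_\Q:\sigma_{\ell,1}^g\simeq\sigma_{\ell,1}\}$, and hence $M(\ell)$, does not depend on $\ell$. No issue there.

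For part (2), however, your route has a genuine gap. You restrict to $G_N$, where $N$ is the compositum of $L$ with the splitting field of the finite-image factor $\gamma_\ell$ (and its conjugates), and then apply Proposition \ref{prop-save} with base field the Galois closure $\widetilde M$ of $M$ and with $N$ as ``the intermediate solvable field''. But Proposition \ref{prop-save} requires the extension $K/F$ to be \emph{solvable}, and nothing guarantees that $N/\widetilde M$ is: $\gamma_\ell$ is only known to have finite image, and a finite subgroup of $\GL_s(\Qlbar)$ with $s\ge 2$ need not be solvable, so the splitting field of $\gamma_\ell$ can be non-solvable over $M$, over $\widetilde M$, and over $L$. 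You neither assume nor prove this solvability, so the application of Proposition \ref{prop-save} as you set it up does not go through. The detour through $N$ is also unnecessary: condition (1) of Proposition \ref{prop-save} only asks for a decomposition of $\rho_\ell|_{G_K}$ into $2$-dimensional pieces, which is already given over $K=L$ by \eqref{eq:splits}, and condition (2) only asks for \emph{finite projective equivalence}, not isomorphism, so there is no need to kill $\gamma_\ell$. This is exactly how the paper argues: take $F=M$ and $K=L$ (solvable over $M$ by hypothesis), and use the representation $\eta_\ell$ of $G_M$ from Proposition \ref{prop:2-splits} (made crystalline at $\ell$ via Lemma \ref{lem-fix}) as the projective extension of $\sigma_{\ell,1}$, the conjugate constituents $\sigma_{\ell,i}\simeq\sigma_{\ell,1}^{g_i}$ being covered by the corresponding conjugates (or their restrictions); your idea of passing to the still totally real Galois closure $\widetilde M\subseteq L$, over which $L$ remains solvable, is a reasonable way to make that last point precise. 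Finally, the preliminary step of turning $\{\eta_\ell|_{G_L}\}$ into a compatible system and the closing remark about transferring automorphy ``back down to $\Q$'' are not needed: the theorem claims only potential automorphy, which Proposition \ref{prop-save} already delivers for all $\ell$.
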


\begin{proof}(1). Write $\sigma_\ell:= \sigma_{\ell ,1}$,
which is strongly irreducible by assumption. We know from Theorem \ref{thm:Clifford} that the $\sigma_{\ell, i}$'s are conjugates of $\sigma_\ell$ by $G_\Q$. {Recall that an irreducible $2$-dimensional $\ell$-adic Galois representation is determined by its trace},  see  \cite{Serre}.  Hence for $g \in G_\Q$, $\sigma_{ \ell} \simeq \sigma_{\ell}^g$ is equivalent to  $\t{Tr} (\sigma_\ell) = \t{Tr} (\sigma_\ell ^g)$. Let $\ell'$ be another prime.  The assumption that for almost all primes $\gp$ of $L$ the characteristic polynomial of $\sigma _\ell(\Frob_\gp)$  is independent of $\ell$ not divisible by $\gp$  implies that $\t{Tr}( \sigma^g_\ell) = \t{Tr}(\sigma^g_{\ell'})$ for all $g \in G_\Q$ by Cebotarev density theorem. So $\sigma _\ell \simeq \sigma ^g _{\ell}$ if and only if $\sigma _{\ell '} \simeq \sigma ^g _{\ell '}$. Hence $M(\ell)$ is independent of $\ell$.

(2). Since  $\eta_\ell |_{G_L}$ is projectively equivalent to $\sigma_{\ell,1}$ by Proposition \ref{prop:2-splits},  we easily see that (2) is a consequence of Proposition \ref{prop-save}.
\end{proof}

\begin{remark}\label{rmk-compatible}With the assumption of the above Theorem and further assumption that $M$ is totally real,  Proposition \ref{prop-automorphy} shows that $\{ \sigma_{\ell}\}$ also forms a compatible system.
\end{remark}

\subsection{Potentially 2-isotypic case}  More can be said about the automorphy of Scholl representations $\rho_\ell$ when they are potentially 2-isotypic. This is stated in the theorem below.

\begin{theorem}\label{thm:main2}  Let $\{\rho_\ell\}$ be a compatible system of  $2d$-dimensional semisimple subrepresentations of Scholl representations of $G_\Q$ which are all 2-isotypic when restricted to $G_F$ for a finite Galois extension $F/\Q$.
Suppose that $F$ contains a solvable extension $K/\Q$ such that  for  each $\ell$ the representation $\rho_\ell \simeq \Ind_{G_K}^{G_\Q} \sigma_\ell$ for a $2$-dimensional  representation $\sigma_\ell$ of $G_K$.
Then  all $\rho_\ell$ are automorphic.
\end{theorem}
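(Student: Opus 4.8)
The plan is to strip off the induction, reduce to the automorphy of $\sigma_\ell$ over $K$, and then treat separately the two possible shapes of $\sigma_\ell$. Since $\rho_\ell\cong\Ind_{G_K}^{G_\Q}\sigma_\ell$ and $K/\Q$ is solvable, iterated cyclic automorphic induction of Arthur--Clozel \cite{AC89} shows it is enough to prove that $\sigma_\ell$ is automorphic over $K$ for every $\ell$. First I would record the basic properties of $\sigma_\ell$: by Frobenius reciprocity $\sigma_\ell$ is a subrepresentation of $\rho_\ell|_{G_K}$, hence of the restriction to $G_K$ of a Scholl representation, so $\sigma_\ell$ is pure of weight $\k-1$ and $\HT_\tau(\sigma_\ell)\subseteq\{0,1-\k\}$ for every $\tau\colon K\hookrightarrow\Qlbar$. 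Writing $\rho_\ell|_{G_F}\cong\tau_\ell^{\oplus d}$ and using $K\subseteq F$ with $F/\Q$ Galois, Mackey's formula gives $\rho_\ell|_{G_F}\cong\bigoplus_{g\in\Gal(F/\Q)}(\sigma_\ell|_{G_F})^g$, whence $\tau_\ell:=\sigma_\ell|_{G_F}$ is $\Gal(F/\Q)$-invariant; since $\Gal(F/\Q)$ permutes the embeddings $F\hookrightarrow\Qlbar$ transitively, $\HT_\tau(\sigma_\ell)$ is independent of $\tau$, and purity of $\det\sigma_\ell$ forces $\HT_\tau(\sigma_\ell)=\{0,1-\k\}$ for all $\tau$, $\det\sigma_\ell=\epsilon_\ell^{1-\k}\chi_\ell$ with $\chi_\ell$ of finite order, and (arguing as in Proposition~\ref{prop-purity}) $\sigma_\ell$ absolutely irreducible, so also $\tau_\ell$ is irreducible.

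\emph{Potentially reducible case.} If $\sigma_\ell$ becomes reducible over a finite extension of $K$, then, exactly as in the proof of Proposition~\ref{prop-auto} (the two distinct Hodge--Tate weights excluding a cyclotomic constituent), Clifford theory (Theorem~\ref{thm:Clifford}) gives $\sigma_\ell\cong\Ind_{G_{K'}}^{G_K}\psi_\ell$ for a quadratic extension $K'/K$ and a geometric character $\psi_\ell$ of $G_{K'}$. Being an algebraic Hecke character, $\psi_\ell$ lies in a strongly compatible system of automorphic characters; inducing up and applying solvable automorphic induction over the solvable extension $K'/\Q$ produces a strongly compatible automorphic system of $2d$-dimensional representations of $G_\Q$ equal to $\Ind_{G_K}^{G_\Q}\Ind_{G_{K'}}^{G_K}\psi_\lambda$.

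\emph{Strongly irreducible case.} Here I would exploit the $\Gal(F/\Q)$-invariance of $\tau_\ell$: its projectivization $\bar\tau_\ell\colon G_F\to\t{PGL}_2(\Qlbar)$ is absolutely irreducible, hence has trivial centralizer in $\t{PGL}_2$, so extends uniquely to $G_\Q\to\t{PGL}_2(\Qlbar)$; by Tate's vanishing $H^2(G_\Q,\Qlbar^\times)=0$ (used already in the proof of Theorem~\ref{thm:Clifford}) this lifts to a linear $\xi_\ell\colon G_\Q\to\GL_2(\Qlbar)$. After twisting by a character of $G_\Q$ as in Lemma~\ref{lem-fix} (whose hypothesis holds because $\xi_\ell\otimes\xi_\ell^\vee|_{G_K}\cong\sigma_\ell\otimes\sigma_\ell^\vee$ is crystalline at $\ell$ for $\ell$ large) and by a power of $\epsilon_\ell$, we may take $\xi_\ell$ geometric with $\HT(\xi_\ell)=\{0,1-\k\}$ and $\det\xi_\ell$ equal to $\epsilon_\ell^{1-\k}$ times a finite order character. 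Then $\sigma_\ell$ and $\xi_\ell|_{G_K}$ share the same projectivization, so $\sigma_\ell\cong\xi_\ell|_{G_K}\otimes\mu_\ell$ for a finite order character $\mu_\ell$ of $G_K$. For $\ell$ in the Dirichlet density one set $\CL$ of \S\ref{ss:3.3}, running the argument of Proposition~\ref{prop-residue-irreducible} for $\xi_\ell$ shows $\sym^2\bar\xi_\ell|_{G_{\Q(\zeta_\ell)}}$ is irreducible, so $\xi_\ell$ is totally odd by \cite[Prop.~2.5]{Cal11} and hence modular by Serre's conjecture, i.e. Theorem~\ref{thm-known}(3); solvable base change makes $\xi_\ell|_{G_K}$ automorphic over $K$, and twisting by the automorphic Hecke character $\mu_\ell$ makes $\sigma_\ell$ automorphic over $K$, for every $\ell\in\CL$.

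\emph{Propagation and main obstacle.} Fix one $\ell_0\in\CL$. In the reducible case the step above already gives a strongly compatible automorphic system of $G_\Q$-representations agreeing with $\{\rho_\ell\}$ at $\ell_0$; in the strongly irreducible case, $\xi_{\ell_0}$ corresponds to a newform $f$ over $\Q$, and the classical strongly compatible Galois system of $f$, restricted to $G_K$, twisted by the finite order Hecke character $\mu_{\ell_0}$, and induced up to $G_\Q$, is again a strongly compatible automorphic system whose member at $\ell_0$ is $\Ind_{G_K}^{G_\Q}\sigma_{\ell_0}=\rho_{\ell_0}$. As $\{\rho_\ell\}$ is itself strongly compatible and two strongly compatible semisimple systems agreeing at one prime agree everywhere (Chebotarev), $\rho_\ell$ is automorphic for all $\ell$. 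I expect the main difficulty to be the strongly irreducible case: descending $\tau_\ell$ to a genuinely \emph{geometric} two-dimensional representation of $G_\Q$ with the right Hodge--Tate weights and determinant, and securing its oddness — available only for $\ell$ in a density-one set — so that Serre's conjecture applies. The $\sym^2$-reduction analysis of \S\ref{ss:3.3} and the compatible-system propagation above are precisely what close the remaining gap.
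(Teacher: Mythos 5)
Your proposal follows essentially the same route as the paper: reduce to automorphy of a $2$-dimensional representation, split into the potentially reducible and strongly irreducible cases, in the hard case use the density-one set $\CL$ of \S\ref{ss:3.3}, Proposition \ref{prop-residue-irreducible} and Theorem \ref{thm-known}(3) to get modularity over $\Q$, then descend by solvable base change, twist, induce back up by \cite{AC89}, and propagate to all $\ell$ by compatibility. The minor variants (deriving $\HT_\tau(\sigma_\ell)=\{0,1-\kappa\}$ from the $\gal(F/\Q)$-invariance of $\tau_\ell=\sigma_\ell|_{G_F}$ instead of from Proposition \ref{prop-purity} over $\Q$, and running the potentially reducible case over $K$ with an algebraic Hecke character rather than via Proposition \ref{prop-auto}) are fine. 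The one structural difference is the descent to $\Q$: the paper takes an irreducible constituent $\rho'_\ell$ of $\rho_\ell$ and invokes Theorem \ref{thm:Clifford} to write $\rho'_\ell\cong\eta_\ell\otimes\gamma_\ell$ with $\gamma_\ell$ of \emph{finite} image, which builds in that $\eta_\ell|_{G_K}$ is \emph{finitely} projectively equivalent to $\sigma_\ell$; you instead extend the projectivization of $\tau_\ell$ to $G_\Q$ (legitimate, by triviality of the centralizer and uniqueness of the extension) and lift it by Tate to a linear $\xi_\ell$.

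The gap is your bare assertion that $\sigma_\ell\cong\xi_\ell|_{G_K}\otimes\mu_\ell$ with $\mu_\ell$ of \emph{finite order}. A Tate lift is only determined up to an arbitrary continuous character twist, so a priori $\mu_\ell$ is just a continuous character of $G_K$; its finiteness is exactly what you need both to place $\xi_\ell$ in the setting of \S\ref{ss:3.3} and Proposition \ref{prop-residue-irreducible} (after a finite base change $\xi_\ell$ must become literally a subrepresentation of the restricted Scholl system, with the crystalline/Fontaine--Laffaille control on weights used there), and to conclude that the twist $\sigma_\ell=\xi_\ell|_{G_K}\otimes\mu_\ell$ is automorphic once $\xi_\ell|_{G_K}$ is. The claim is true but requires an argument: either cite Theorem \ref{thm:Clifford} here as the paper does, or argue directly that after your Lemma \ref{lem-fix} and $\epsilon_\ell$-power normalization $\mu_\ell$ is a direct summand of $(\xi_\ell|_{G_K})^{\vee}\otimes\sigma_\ell$ locally at $v\mid\ell$, hence Hodge--Tate of weight $0$ at every embedding, while (being a continuous $\Qlbar^{\times}$-valued character, with values in $\mathcal O_E^{\times}$ for some finite $E/\Q_\ell$) it is unramified at almost all places with finite inertia image elsewhere, so a power of $\mu_\ell$ is everywhere unramified and $\mu_\ell$ has finite order by class field theory, exactly as in the proof of Proposition \ref{prop-purity}. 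With that point supplied your argument closes, and it is in substance the paper's proof with Theorem \ref{thm:Clifford} re-derived by hand.
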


\begin{proof} Since $\{\rho_\ell\}$ forms a compatible system, it suffices to show that $\rho_\ell$ is automorphic for one $\ell$.

As $\rho_\ell$ is potentially 2-isotypic over $F$ and $\rho_\ell \simeq \Ind_{G_K}^{G_\Q} \sigma_\ell$ with  $K \subset F$, this forces $\sigma_\ell$ to be irreducible. Let $\rho'_\ell$ be an irreducible subrepresentation of $\rho_\ell$ so that $\sigma_\ell$ is a subrepresentation of $\rho'_\ell|_{G_K}$. Since $\rho_\ell$ is potentially 2-isotypic,  so is $\rho'_\ell$.  By Theorem \ref{thm:Clifford}, $\rho'_\ell\cong \eta_\ell\otimes \gamma_\ell$ for a $2$-dimensional representation $\eta_\ell$ and a  representation $\gamma_\ell$ of $G_\Q$, where $\eta_\ell |_{G_K}$ is finitely projectively equivalent to $\sigma_{\ell}$ and  $\gamma_\ell$ has finite image.  It follows from Proposition \ref{prop-purity} that $\eta_\ell$ is  irreducible with two distinct Hodge-Tate weights $0$ and $1- \kappa$ for some integer $\kappa \ge 2$.
If $\eta_\ell$ is potentially reducible, then it is odd and automorphic by Proposition \ref{prop-auto}.
So we now assume that $\eta_\ell$ is strongly irreducible for all $\ell$ and show that the same automorphy conclusion holds for some $\eta _\ell$.

By Lemma \ref{lem-fix}, for $\ell$ large so that $\rho_\ell |_{G_{\Q_\ell}}$ is crystalline, there exists a finite character $\xi_\ell$ of $G_\Q$ such that  $\eta_\ell \otimes \xi_\ell$ is crystalline at $\ell$. Replacing $\eta_\ell$ by $\eta_\ell \otimes \xi_\ell$ and $\gamma_\ell$ by $\xi_\ell^{-1}\otimes \gamma_\ell$ if necessary, we may assume that $\eta_\ell$ is crystalline above $\ell$.  Choose a large $\ell$ so that it satisfies the conditions for $\CL$ in \S \ref{ss:3.3} with $F = \Q$ and $K$ as in Theorem \ref{thm:main2}.
As $\eta_\ell |_{G_K}$ is strongly irreducible, by Proposition \ref{prop-residue-irreducible} $\t{Sym}^2 \bar \eta_\ell |_{G_{K(\zeta_\ell)}}$ is absolutely irreducible, and hence the same holds for $\t{Sym}^2 \bar \eta_\ell |_{G_{\Q(\zeta_\ell)}}$. Now apply Theorem \ref{thm-known}(3) to conclude that $\eta_\ell$ is automorphic.

Finally since $K/\Q$ is a finite solvable extension, by base change, $\eta_\ell |_{G_K}$ is also automorphic, and so is its finite twist $\sigma_{\ell}$. Finally, by applying automorphic induction to $\sigma_{\ell}$ over the solvable extension $K/\Q$, we obtain the automorphy of $\rho_\ell \cong \text{Ind}_{G_K}^{G_\Q} \sigma_{\ell}$, as desired.

\end{proof}

\begin{remark}
In \cite{ALLL} the authors showed that degree-$4$ Scholl representations of $G_\Q$ admitting quaternion multiplication are automorphic. As shown in Theorem 3.1.2 {\it loc. cit.}, such representations are special cases of the degree-$4$ representations in Theorem \ref{thm:main2}.
\end{remark}
\bigbreak

\subsection{Some examples of potentially 2-isotypic representations}

\subsubsection{Weight-2 examples}\label{ss:wt-2}

Using Belyi's Theorem, every smooth irreducible projective curve $C$ defined over $\overline \Q$ is isomorphic to the modular curve of a finite index subgroup $\G$ of $\SL$, which is not unique in general. In this regard, the Galois representations on the Jacobian of  $C$, when $C$ is defined over $\Q$, may be viewed as Scholl representations of $G_\Q$ associated to $S_2(\G)$, the space of weight 2 cusp forms of $\G$.
\begin{eg}
Consider the family of curves $C_b: y^2=x^6+bx^3+1$ with generic genus 2. Here we assume $b\in \Q$. On $C_b$ there are two maps, which are $\tau_1: (x,y)\mapsto (\zeta_3 x,y)$ defined over $\Q(\zeta_3)$, and $\tau_2: (x,y) \mapsto (\frac 1x, \frac y{x^3})$ defined over $\Q$. They are of order $3$ and $2$, respectively. Together they generate a finite group isomorphic to the dihedral group of order $12$. For special choices of $b$ such as $b=\pm 2$, the curve has smaller genus and for other choices of $b$ such as $b=0$, the curve is a quotient of a Fermat curve and hence the corresponding 4-dimensional Galois representations $\rho_{\ell,b}$ decompose into $1$-dimensional representations after suitable restriction. For generic $b$, using $\tau_2$ we decompose $\rho_{\ell,b} = \sigma_{\ell,b,1}\oplus \sigma_{\ell, b,2}$ into the sum of two degree-$2$ irreducible representations $\sigma_{\ell, b,i}$, $i=1, 2$,  of $G_\Q$ (since $\tau_2$ is defined over $\Q$) over $\Q_\ell$. Further, for each $i$, as $\ell$ varies, the family $\{\sigma_{\ell, b,i}\}$ is compatible. So the characteristic polynomials of $ \sigma_{\ell,b,i}(\Frob_p)$ at unramified $p$ have coefficients in $\Z$.  Moreover, $\rho_{\ell, b}$ restricted to $G_{\Q(\sqrt{-3})}$ commutes with the operator arising from $\tau_1$.  Hence at almost all primes $p \equiv 1 \mod 3$ splitting in $\Q(\sqrt{-3})$ where $\rho_{\ell, b}$ is unramified, the characteristic polynomial of $\rho_{\ell, b}(\Frob_p)$ is a square. This in turn implies that $\sigma_{\ell, b, 1}(\Frob_p)$ and $\sigma_{\ell, b,2}(\Frob_p)$ have the same characteristic polynomials because they are over $\Z$. When $p \equiv 2 \mod 3$, we have $\Z/p\Z = (\Z/p\Z)^3$ so that over $\Z/p\Z$, the curve $C_b$ is isomorphic to the genus $0$ curve $y^2 = s^2 + bs + 1$. Therefore at almost all such primes $p$, $\rho_{\ell, b}(\Frob_p)$ has trace $0$, which means that $\sigma_{\ell, b, 1}(\Frob_p)$ and $\sigma_{\ell, b,2}(\Frob_p)$ have opposite traces and the same determinants. Combined, this shows that $\sigma_{\ell,b,1}$ and $\sigma_{\ell,b,2}$ differ by a quadratic twist associated to $\Q\left (\sqrt{-3}\right )$. Therefore $\rho_{\ell,b}$ is also potentially 2-isotypic over $\Q\left (\sqrt{-3}\right )$. By Corollary \ref{co-save}, $\rho_{\ell, b}$ is automorphic for all $\ell$.

Both $\sigma_{\ell,b,1}$ and $\sigma_{\ell,b,2}$ are odd and have Hodge-Tate weights $0$ and $-1$, by now established Serre's conjecture, both correspond to holomorphic weight-2 cuspidal newforms with coefficients in $\Z$. As such, the modularity theorem further implies that these newforms are associated to elliptic curves over $\Q$ with conductor equal to the level of the corresponding form. A well-known bound on the conductor of an elliptic curve over $\Q$ then bounds the possible levels, namely the $p$-exponent of the level is at most 8 for $p$ = 2, 5 for $p$ = 3, and 2 for $p \ge 5$. For example, for $b = 1$, the representation $\rho_{\ell, 1}$ and hence $\sigma_{\ell, 1,1}$ and $\sigma_{\ell, 1,2}$ are unramified outside $2, 3, \ell$. Thus the level of the newform $f$ corresponding to the family $\{\sigma_{\ell, 1,1}\}$ divides $2^8\cdot 3^5$. Using the information on the characteristic polynomials of $\rho_{\ell,1}$ at primes $p = 5, 7, 11, 13$ computed by {\texttt Magma} and a search among all weight-2 Hecke eigenforms with levels dividing $2^8\cdot 3^5$ and trivial character,  we conclude that $f$ is the weight-2 level 324 non-CM newform labelled by  324.2.1.a in  \cite{lmfdb}, and that corresponding to $\sigma_{\ell, 1,2}$ is the twist of $f$ by the quadratic character associated to $\Q(\sqrt{-3})$. 

\end{eg}

\begin{eg}\label{eq:8-dimensional}In \cite{DFLST}, the authors used hypergeometric functions over finite fields to study Galois representations arising from hypergeometric abelian varieties. In particular, they considered the  family of  smooth curves $X_t^{[6;4,3,1]}$ obtained from desingularization of the generalized Legendre curves $$y^{6}=x^4(1-x)^3(1-tx)$$  with $t\in \Q\setminus \{0,1\}$. The genus of $X_t^{[6;4,3,1]}$  is 3. It is shown that the Jacobian variety of  $X_t^{[6;4,3,1]}$ has a 2-dimensional  primitive part  $J_t^{\text{prim}}$ defined over $\Q$, obtained from removing the sub abelian varieties  isogenous to factors of the Jacobian varieties obtained from $y^d=x^4(1-x)^3(1-tx)$ where $1<d<6, d\mid 6$. The abelian variety $J_t^{\text{prim}}$ gives rise to a compatible system of $\ell$-adic representations $\rho_{\ell,t}: G_\Q\rightarrow \text{GL}_4(\Q_\ell)$.  Due to the map $(x,y)\mapsto (x,\zeta_{6} y)$ on $X_t^{[6;4,3,1]}$ defined over $K=\Q(\zeta_3)$, $$\rho_{\ell,t}|_{G_K}=\sigma_{\ell,t}\oplus \sigma_{\ell,t}^{\tau}$$ where $\tau$ is the complex conjugation in $\gal(K/\Q)$. By Example 3 of \cite{DFLST}, there is a finite character $\psi$ of $G_K$ trivial on $G_{L_t}$ such that $\sigma_{\ell,t}\cong \sigma_{\ell,t}^{\tau}\otimes \psi$. Here $L_t=K\left (\sqrt[6]{t\frac{(1-t)^2}{2^4}} \right)$ is a finite Galois extension of  $\Q$. Thus $\rho_{\ell, t}$ is $2$-isotypic over $L_t$. For any $t\in \Q\smallsetminus \{0,1\}$ such that $L_t\neq K$ and $\sigma_{\ell,t}$ is strongly irreducible, $\sigma_{\ell,t}$ is not isomorphic to  $\sigma_{\ell,t}^{\tau}$.  Hence $\rho_{\ell, t}=\text{Ind}_{G_K}^{G_\Q}\sigma_{\ell,t}.$ For those values of $t$, we have $\rho_{\ell, t}$ automorphic for all $\ell$ by Theorem  \ref{thm:main2}.

Similarly, when one considers the smooth model  $X_t^{[12;9,5,1]}$ of $y^{12}=x^9(1-x)^5(1-tx)$, for generic $t\in \Q\setminus\{0,1\}$, the primitive part of its Jacobian variety is 4-dimensional and the corresponding 8-dimensional Galois representation $\rho_{\ell, t}$ is potentially of $\GL_2$-type such that its restriction to $G_K$ with $K=\Q(\zeta_{12})$ decomposes as
$$\rho_{\ell,t}|_{G_K}=\sigma_{\ell,t} \oplus \sigma_{\ell,t}^{\tau_1}\oplus  \sigma_{\ell,t}^{\tau_2}\oplus  \sigma_{\ell,t}^{\tau_1\tau_2},$$  where $\tau_1:\zeta_{12}\mapsto \zeta_{12}^{-1}$ and $\tau_1:\zeta_{12}\mapsto \zeta_{12}^{5}$  are in $\gal(K/\Q)$ and $\sigma_{\ell, t}$ is strongly irreducible for generic $t$. By  \S 7.2 of \cite{DFLST}, $\sigma_{\ell,t} \cong \sigma_{\ell,t}^{\tau_1}\otimes \chi_1(t)$ and $\sigma_{\ell,t} \cong \sigma_{\ell,t}^{\tau_2}\otimes \chi_2(t)$, where $\chi_1(t),\chi_2(t)$ are characters of $G_K$ of order dividing $12$ whose kernels are the absolute Galois group of $K \left (\sqrt[12]{-27t^2(1-t)^6}\right )$ and $K(\sqrt[6]{t})$ respectively. Thus, for a generic choice of $t$,  none of $\chi_1(t)$, $\chi_2(t)$, $\chi_1(t)\chi_2(t)$  are trivial characters, indicating that the representations $\sigma_{\ell,t},  \sigma_{\ell,t}^{\tau_1}, \sigma_{\ell,t}^{\tau_2}, \sigma_{\ell,t}^{\tau_1\tau_2}$ are pairwise  non-isomorphic.  Hence $\rho_{\ell, t}=\text{Ind}_{G_K}^{G_\Q}\sigma_{\ell,t}.$ On the other hand, $\rho_{\ell,t}$ is potentially 2-isotypic over the Galois extension $L_t=\Q\left (\zeta_{12}, \sqrt[6]{t},\sqrt[12]{-27(1-t)^6}\right )$ of $\Q$. Therefore, by Theorem \ref{thm:main2}, $\rho_{\ell, t}$ is automorphic for a generic $t$.
\end{eg}

\subsubsection{A weight-4 example}The following example was originally investigated by the late Oliver Atkin among his unpublished notes. The construction is similar to the cases discussed in \cite{LLY, HLV} except that the space of modular forms in consideration has weight-4 instead of weight-3.
 The group $\G_0(4)$ has genus $0$, 3 cusps, and admitting the Atkin-Lehner involution $W_4$. A Hauptmodul for $\G_0(4)$ is
$t:=t(z)=\eta(z)^8/\eta(4z)^8$, where $\displaystyle \eta(z):=q^{1/24}\prod_{n=1}^\infty (1-q^n), q=e^{2\pi i z}$ denotes the Dedekind eta function. The function $t$ has a simple pole at the cusp infinity and vanishes at the cusp 0.  The function
$E:=\eta(2z)^{16}/\eta(z)^8 $ is a  weight 4 Eisenstein series for
$\G_0(4)$ which vanishes at all cusps of $\G_0(4)$ but 0. The
function $t_3(z):=\sqrt[3]{\eta(z)^8/\eta(4z)^8} = \sqrt[3]{t}$ is a Hauptmodul of an
index-3 noncongruence subgroup, denoted by $\G$, of $\G_0(4)$.  One way to see that $t_3(z)$ is a noncongruence modular function is that  the Fourier coefficients of its $q$-expansion have unbounded powers of $3$ in the denominators.
The modular curve for $\G$ has a model  defined over $\Q$ and it is a 3-fold cover of the modular curve for
$\G_0(4)$, ramified only at the cusps $0$ and $\infty$ with
ramification degree 3. The space $S_4(\G)$ of weight 4 cusp forms  is
spanned by
$$f_1=E\cdot (t(z))^{2/3}=\sqrt[3]{\frac{\eta(2z)^{48}}{ \eta(z)^8\eta(4z)^{16}}} \quad {\rm ~~and} \quad
\quad f_2=E\cdot (t(z))^{1/3}=\sqrt[3]{\frac{\eta(2z)^{48}}{
\eta(z)^{16}\eta(4z)^{8}}}.$$ Their Fourier expansions $\displaystyle f_i=\sum_{n\ge 1} a_i(n)q^{n/3}$,
$i = 1, 2$, have coefficients in $\Q$.

 Let $\rho_\ell$ be the $\ell$-adic $4$-dimensional Scholl representation of $G_\Q$ attached to $S_4(\G)$.  The matrices $\zeta=\begin{pmatrix}
  1&1\\0&1
\end{pmatrix}$ and  $W_4=\begin{pmatrix}
0&-1\\4&0
\end{pmatrix}$ normalize $\Gamma$ and thus act on forms on $\Gamma$ via the stroke operator as follows:
$$ f_1|_\zeta=\zeta_3^{~2}f_1, \quad  f_2|_\zeta=\zeta_3f_2, \quad f_1|_{W_4}=2^{16/3}f_2, \quad  f_2|_{W_4}=2^{8/3}f_1.$$
They induce the corresponding actions $\zeta^*$ and $W_4^*$ on  $\rho_{\ell}$.  The operator $\zeta^*$ is defined over $K=\Q\left (\sqrt{-3} \right)$ and $W_4^*$ is defined over $F = K\left (\sqrt[3]{2} \right)$, the splitting field  of $x^3-2$.  These two operators generate a group isomorphic to $S_3$, the symmetric group on $3$ letters, which acts on $\rho_\ell$ and commutes with $\rho_\ell(G_F)$.

Using $\zeta^*$, we decompose $\rho_\ell|_{G_{K}}=\sigma_\ell\oplus \sigma_\ell^\tau$, where  $\sigma_\ell$ is a 2-dimensional strongly irreducible representation of $G_K$ over $\Q(\zeta_3)$
 and $\tau\in G_\Q \smallsetminus G_K$. Explicit computations yield the following characteristic
polynomials of Frobenius elements and their factorization over $\Z[\zeta_3]$. More precisely, we proceed as follows. We first compute many Fourier coefficients $a_i(n)$ of $f_i$ explicitly. As shown by Scholl \cite{sch85b},  for $p \ge 5$, the characteristic polynomial of $\rho_\ell(\Frob_p)$ is a degree-$4$ polynomial $T^4+A_3(p)T^3+A_2(p)T^2+A_1(p)T+A_0(p) \in \Z[T]$ with all roots of absolute value $p^{3/2}$. This sets the range for $A_j(p)$. Moreover, the Atkin and Swinnerton-Dyer congruences hold for $f_i$, $i = 1, 2$, and integers $r \ge -1$: $$a_i(p^{r+2})+A_3(p)a_i(p^{r+1})+A_2(p)a_i(p^{r})+A_1(p)a_i(p^{r-1})+A_0(p)a_i(p^{r-2})\equiv0 \mod p^{3+r}.$$ Here $a_i(p^{r})=0$ if $r<0$.
See \cite{sch85b} for details. For each prime $p$ listed below, we tested for the first few $r\ge 1$ and $i=1,2$, from which all coefficients $A_j(p)$ are determined.
$$\begin{tabular}{|c|c|c|c}\hline
$p$&Char. poly. of $\rho_\ell(\Frob_p)$ & Factorization over $\Z[\zeta_3]$\\
\hline
5&$x^4-74 x^2+5^6$&$(x^2+18 x+5^3) (x^2-18 x+5^3)$\\
\hline 7&$x^4+8x^3-279x^2+2744x+7^6$&$(x^2-8\zeta_3x+\zeta_3^2 7^3)(x^2-8\zeta_3^2x+\zeta_3 7^3)$\\
\hline 11&$x^4+1366 x^2+11^6$&$(x^2+36 x+11^3) (x^2-36 x+11^3)$\\
\hline 13& $x^4-10x^3-2097x^2-21970x+13^6$&$(x^2+10\zeta_3x+\zeta_3^2 13^3)(x^2+10\zeta_3^2x+\zeta_3 13^3)$\\
\hline 17&$x^4+9502 x^2+17^6$&$(x^2-18 x+17^3) (x^2+18 x+17^3)$ \\
\hline 23&$x^4+19150 x^2+23^6$&$(x^2+72 x+23^3) (x^2-72
x+23^3)$\\
\hline 31&$(x^2+16x+31^3)^2$&$(x^2+16x+31^3)^2$. \\ \hline
\end{tabular}$$From the data above we see that $\sigma_\ell$ is not isomorphic to  $\sigma_\ell^\tau$; thus $\rho_\ell\cong \text{Ind}_{G_K}^{G_\Q}\sigma_\ell$. Moreover, $\sigma_\ell|_{G_F}\cong \sigma_\ell^\tau|_{G_F}$  by the action of $W_4^*$. Hence $\rho_{\ell}$ is 2-isotypic over the Galois extension $F=\Q(\sqrt{-3},\sqrt[3]{2})$ of $\Q$. By Theorem \ref{thm:main2}, $\rho_\ell$ is automorphic. Alternatively, applying Theorem \ref{thm:Clifford}, we have $\rho_\ell \cong \eta_\ell \otimes \gamma_\ell$, where $\eta_\ell|_{G_K}$ is finitely projectively equivalent to $\sigma_\ell$ and $\gamma_\ell$ has finite image. In fact, according to the numerical data computed by Atkin,  $\eta_\ell$ can be chosen to be the Galois representation attached to either one of the following weight-4 level 12 congruence Hecke eigenforms $$g_\pm:=g_1(z)\pm18g_5(z)+3(g_1(3z)\pm18g_5(3z))$$ with $g_1(z)=\eta^4(z)\cdot (3E_2(3z)-E_2(z))/2$ and $ g_5=\eta(z)^2 \eta(3z)^6$ in which $E_2$ is the
weight 2 nonholomorphic Eisenstein series. Here $g_\pm$ differ by twisting by the quadratic character $\chi_{-3}$ corresponding to $\Q(\sqrt{-3})$. The representation $\gamma_\ell$ is induced from a finite character of $G_K$, hence is also automorphic. Therefore $\rho_\ell$ corresponds to an automorphic representation of $\GL_2 \times \GL_2$ over $\Q$, hence is automorphic by \cite{Ramakrishnan}.

\section{An infinite family of potentially automorphic Scholl representations attached to weight-$3$ noncongruence forms}\label{sec:wt-3}

In this section we apply the results of previous sections to prove the potential automorphy of an infinite family of Scholl representations attached to weight-$3$ cusp forms for the noncongruence subgroups explicitly constructed in \cite{ALL}. This gives the first family of Scholl representations of $G_\Q$ with unbounded degree for which the potential automorphy is established.

\subsection{A family of elliptic surfaces $\E_n$}\label{sec;En} It is well-known that there are thirteen $K3$ surfaces defined over $\Q$ whose N\'eron-Severi group has rank $20$, generated by algebraic cycles over $\Q$. Elkies and Sch\"utt \cite{Elkies-Schutt} have constructed them from suitable double covers of $\mathbb P^2$ branched above $6$ lines. We recall  such a $K3$ surface $\E_2$, labelled as $\mathcal A(2)$ in \cite{Beukers-Stienstra}  by Beukers and Stienstra. It is given by replacing the variable $\tau$ in the equation $X(Y-Z)(Z-X)-\tau (X-Y)YZ=0$ by $t_2^{~2}$, yielding a $2$-fold cover of $\mathbb P^2$ branched over the $6$ lines $X=0, Y=0, Z=0, X-Y=0,  Y-Z=0, Z-X=0$  positioned as follows:
$$
\includegraphics[scale=0.25]{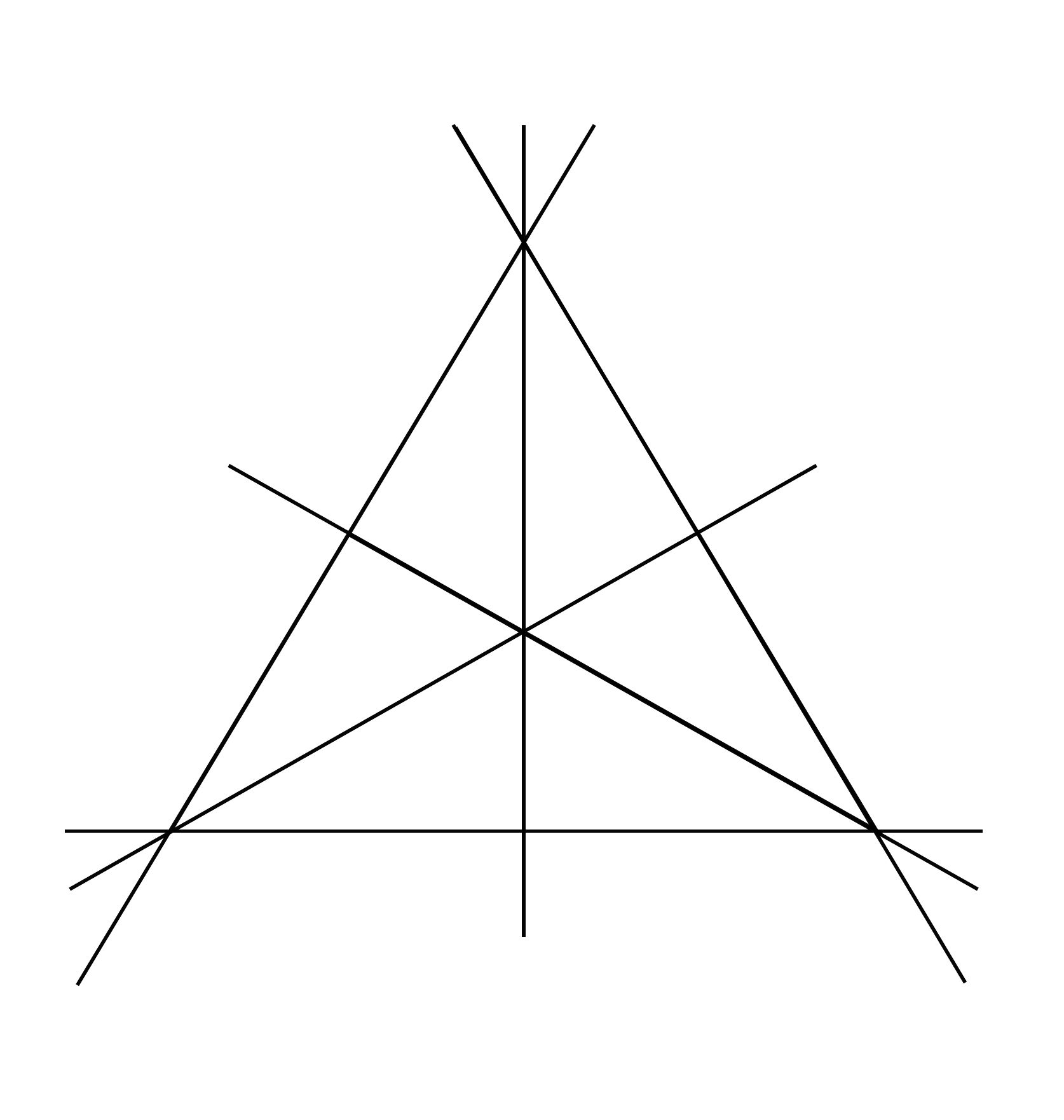}
$$

By letting $X=-t_2^{~2}VW, Y=-t_2^{~2}UW+U^2, Z=-UV$ and further by $x=U/W, y=V/W$,   Beukers and Stienstra \cite{Beukers-Stienstra} arrived at the following nonhomogeneous model for the $K3$ surface $\E_2$ in the sense of Shioda \cite{Shioda} $$ \E_2~: ~y^2+(1-t_2^{~2})xy-t_2^{~2} y=x^3-t_2 ^{~2} x^2,$$ where $t_2$ is a parameter.  We  extend this setting to the family of elliptic surfaces $\E_n$, for $n \ge 2$, in the sense of Shioda  defined by
\begin{eqnarray}\label{eq:En}
\E_n~:~ y^2+(1-t_n^{~n})xy-t_n^{~n} y=x^3-t_n^{~n} x^2
\end{eqnarray}

\noindent with $t_n:=\sqrt[n]{\tau}$ as a parameter. It is an $n$-fold cover of $\mathbb P^2$ branched above the same configuration of $6$ lines. The Hodge diamond of $\E_n$ is of the form

$$\begin{array}{cccccccc}
&&1&&\\
&0&&0&&\\
(n-1)&&10n&&(n-1)\\
&0&&0&&\\
&&1&&\\
\end{array}$$

The action of the Galois group $G_\Q$ on the $(12n-2)$-dimensional space $H^2_{et}(\E_n \otimes_\Q \overline \Q, \Q_\ell)$ is an $\ell$-adic representation $\tilde \rho_{n, \ell}$. As $\ell$ varies, they  form a compatible system such that, at a prime $p$ where $\E_n$ has good reduction, $\tilde \rho_{n, \ell}$ for $\ell \ne p$ is unramified at $p$ and $\tilde \rho_{n, \ell}(\Frob_p)$ has characteristic polynomial $P_2(\E_n,p, T) \in \Z[T]$ independent of $\ell$ and of degree $12n-2$. It occurs in the denominator of the Zeta function of $\E_n$ mod $p$:

$$ Z(\E_n/\F_p, T) = \frac{1}{(1-T)(1-p^2T)P_2(\E_n,p,T)}.$$

\noindent Moreover, the algebraic cycles on $\E_n$ generate the N\'eron-Severi group of rank $10n$; its orthogonal complement in the second singular cohomology group of $\E_n$ is a group of rank $2n-2$ generated by  transcendental cycles. Each group gives rise to a subrepresentation of $\tilde \rho_{n, \ell}$, denoted $\tilde \rho_{n, \ell, a}$ (algebraic part) and $\tilde \rho_{n, \ell, t}$ (transcendental part), respectively, so that $\tilde \rho_{n, \ell} = \tilde \rho_{n, \ell, a} \oplus \tilde \rho_{n, \ell, t}$. Therefore $P_2(\E_n,p,T)$ is a product of  $10n$ linear factors  in $\Z[T]$ from counting points on algebraic cycles and a degree $2n-2$ polynomial $Q(\E_n; p; T) \in \Z[T]$  from counting points on transcendental cycles. The system of $(2n-2)-$dimensional $\ell$-adic representations $\{ \tilde \rho_{n, \ell, t}\}$ of $G_\Q$ is compatible. The factor at a good prime $p$ of the associated $L$-function is $1/Q(\E_n; p, p^{-s})$.

\subsection{Fibration of $\E_n$ over a modular curve $X_n$} It was shown in \cite{Beukers-Stienstra} that the elliptic surface $\E$ defined by
$$ y^2+(1-\tau)xy-\tau y=x^3-\tau x^2$$
with parameter $\tau$ is fibered over the genus $0$ modular curve $X_{\G^1(5)}$ of the congruence subgroup
\begin{eqnarray*}
\Gamma^1(5) &=&\left\{\gamma \in \SL ~|~ \gamma \equiv \left( \begin{matrix}
             1 & 0\\
             * & 1
              \end{matrix}
            \right)
         \mod 5 \right\}
\end{eqnarray*}
of $\SL$.
Thus $\E_n$ is fibered over a genus zero $n$-fold cover $X_n$ of $X_{\Gamma^1(5)}$ under $\tau = t_n^{~n}$. We give more details about $X_n$. The curve $X_{\Gamma^1(5)}$ is defined over $\Q$, containing no elliptic points and $4$ cusps, at $\infty$, $0$, $-2$ and  $-5/2$, among them the cusps $\infty$ and $-2$ are defined over $\Q$.  Let $E_1$ and $E_2$ be two Eisenstein series of weight $3$ with $\Q$-rational Fourier coefficients and having simple zeros at all cusps
except $\infty$ and $-2$, respectively.
Then $\tau = \frac{E_1}{E_2}$ is a Hauptmodul for $\Gamma^1(5)$ with a simple zero at the cusp $-2$ and a simple pole at the cusp $\infty$.
With $t_n = \sqrt[n] {\tau}$, the curve $X_n$ is unramified over $X_{\Gamma^1(5)}$ except totally ramified above the cusps $\infty$ and $-2$ (with $\tau$-coordinates $\infty$ and $0$, resp.). This describes the index-$n$ normal subgroup $\Gamma_n$ of $\Gamma^1(5)$ such that $X_n$ is the modular curve of $\Gamma_n$. See \cite{ALL} for an expression of $\Gamma_n$ in terms of generators and relations. Note that $\E_n$ is an universal elliptic curve over $X_n$.

It is known that $\Gamma_n$ is a noncongruence subgroup of $SL_2(\Z)$ if $n \ne 5$, and $\Gamma_5$ is isomorphic to the principal congruence subgroup $\Gamma(5)$. The space of weight-$3$ cusp forms for $\Gamma_n$ is $(n-1)$-dimensional, corresponding to holomorphic $2$-differentials on $\E_n$; it has an explicit basis given by $(E_1^jE_2^{n-j})^{1/n}$ for $1 \le j \le n-1$ (cf. \cite{ALL, LLY}).

\subsection{Scholl representations attached to $S_3(\Gamma_n)$} As reviewed in \S \ref{sec: Scholl}, to $S_3(\Gamma_n)$ Scholl has attached a compatible system of $2(n-1)$-dimensional $\ell$-adic representations $\rho_{n, \ell}$ of $G_\Q$ acting on the parabolic cohomology group $W_{n, \ell} = H^1(X_n\otimes_\Q \overline {\Q}, \iota_* \mathcal F_\ell)$ of $X_n$,
similar to Deligne's construction of $\ell$-adic Galois representations attached to congruence forms (cf.  \cite{sch85b}). He also proved in \cite{sch90} the existence of a Kuga-Sato variety $Y_n$ over $\Q$ of dimension $2$ such that $W_{n, \ell}$ can be  imbedded into the $G_\Q$-module $H_{et}^2(Y_n\otimes_\Q \overline \Q, \Q_\ell)$.  In our case $Y_n$ is nothing but $\E_n$ and the subrepresentation of $\tilde \rho_{n, \ell}$ isomorphic to $\rho_{n, \ell}$ is precisely $\tilde \rho_{n, \ell, t}$. Hence $L(\{\rho_{n, \ell}\}, s)$ agrees with $L(\{\tilde \rho_{n,\ell,t} \}, s)$.

We list some key properties of the Scholl representations $\rho_{n, \ell}$:
\begin{enumerate}

\item $\rho_{n, \ell}$ is unramified outside $n\ell$;

\item For $\ell$ large, $\rho_{n, \ell}|_{G_{\Q_\ell}}$ is crystalline with Hodge-Tate weights $0$ and $-2$, each with multiplicity $n-1$;

\item $\rho_{n, \ell}$ at the complex conjugation has eigenvalues $\pm1$, each with multiplicity $n-1$;

\item The space $W_{n,\ell}$ of $\rho_{n, \ell}$ admits the action by $\zeta = \left( \begin{matrix}
             1& 5\\
             0 & 1
              \end{matrix}
            \right)$, induced from the action
 \begin{equation}\label{eq:A}
\zeta(t_n) = \zeta_{n}^{-1}t_n\end{equation}
on rational functions on $X_n$. Here $\zeta_n = e^{2 \pi \sqrt{-1}/n}$.
\end{enumerate}

The purpose  of \S5 is to prove the (potential) automorphy of the Scholl representation $\rho_{n,\ell}$. Since $\Gamma_5$ is a congruence subgroup, $\rho_{5, \ell}$ is naturally automorphic. Our concern is for the case $n \ne 5$. To proceed, we make the following observation. Given $n \ge 2$, it follows from the property (4) above that the action of $\zeta^*$ on $W_{n, \ell}$ induced from $\zeta$ has order $n$, so $W_{n, \ell}$ decomposes into the direct sum of eigenspaces of $\zeta^*$ with eigenvalues $\zeta_n^m$ for $m = 1,..., n-1$. For a proper divisor $d$ of $n$ with $d>1$, since $t_d = t_n^{n/d}$, the subspace of $W_{n, \ell}$ on which $(\zeta^*)^{d}$ acts trivially can be identified with $W_{d,\ell}$ so that $\rho_{d, \ell}$ may be regarded as a subrepresentation of $\rho_{n, \ell}$. The space $W_{d,\ell}$ is the sum of eigenspaces with eigenvalues $\zeta_d^r = \zeta_n^{rn/d}$ for $r=1,..., d-1$. Consequently the sum of eigenspaces in $W_{n, \ell}$ with eigenvalues $\zeta_n^m$ where $(m, n) = 1$ complements the space $\sum_{d|n, 1 < d < n} W_{d, \ell}$. This $G_\Q$-invariant subspace is the ``new'' part of $W_{n, \ell}$, denoted by $\rho_{n, \ell}^{new}$. By inclusion-exclusion, we find that $\rho_{n, \ell}^{new}$ has dimension $2\phi(n)$ (where $\phi$ is Euler's totient-function) and $\rho_{n, \ell}$ can be decomposed as the sum

$$ \rho_{n, \ell} = \bigoplus_{d|n, ~d>1} \rho_{d, \ell}^{new}.$$

\noindent Clearly,  $\rho_{n, \ell} = \rho_{n, \ell}^{new}$ when $n$ is a prime. It suffices to show that each $\rho_{n,\ell}^{new}$ is (potentially) automorphic.  For this, we shall prove

\begin{theorem}\label{thm:potentialauto}
 Let $n\ge 2$ be an integer. There are $2$-dimensional $\ell$-adic representations $ \sigma_\ell $ of $G_{\Q(\zeta_n)}$ whose semi-simplifications form a compatible system such that $\rho_{n,\ell}^{new} \cong \text{Ind}_{G_{\Q(\zeta_n)}}^{G_\Q} \sigma_\ell$ for all primes $\ell$. For each $\ell$ the representation $\rho_{n,\ell}^{new}$ is potentially automorphic. Further, it is automorphic if either $n \le 6$ or $\sigma_\ell$ is potentially reducible.
\end{theorem}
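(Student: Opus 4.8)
The plan is to realize $\rho_{n,\ell}^{new}$ as induced from a $2$-dimensional representation of $G_{\Q(\zeta_n)}$, establish that the induced representation fits the hypotheses of Proposition \ref{prop-save} (or directly of Theorem \ref{thm-try}), and then upgrade potential automorphy to genuine automorphy in the small-$n$ and potentially-reducible cases. The starting point is the action of $\zeta^*$ on $W_{n,\ell}$ described in property (4) of \S5.3. Over $\Q(\zeta_n)$ the eigenspace decomposition of $\zeta^*$ on $\rho_{n,\ell}^{new}$ becomes $G_{\Q(\zeta_n)}$-stable, and each eigenspace (for a primitive $n$-th root of unity eigenvalue) should have dimension $2$; call one such eigenspace $\sigma_\ell$. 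The Galois group $\gal(\Q(\zeta_n)/\Q)\cong(\Z/n\Z)^\times$ permutes these eigenspaces transitively, so $\rho_{n,\ell}^{new}\cong\Ind_{G_{\Q(\zeta_n)}}^{G_\Q}\sigma_\ell$.

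\emph{First} I would pin down $\sigma_\ell$ precisely: using the explicit basis $(E_1^jE_2^{n-j})^{1/n}$, $1\le j\le n-1$, of $S_3(\Gamma_n)$ and the fact that $\zeta$ scales $t_n$ by $\zeta_n^{-1}$, one identifies which pair of basis forms spans the relevant eigenspace, and hence $\sigma_\ell$ is the $2$-dimensional piece of $W_{n,\ell}$ attached to the two conjugate newforms. \emph{Second}, I would verify that the semisimplifications of $\{\sigma_\ell\}$ form a compatible system: this needs a trace computation, and the paper announces exactly this as Theorem \ref{thm:trace}, obtained by exploiting the symmetries of $\E_n$ (the character sum estimate of Corollary 5.5 is a by-product). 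Granting $\Tr\sigma_\ell\in\Q(\zeta_n)$ is realized by a fixed compatible family, compatibility of $\{\sigma_\ell\}$ follows by Cebotarev, and condition (C) of Proposition \ref{prop-automorphy} is met. \emph{Third}, to feed this into the potential automorphy machinery I would pass to the maximal totally real subfield $F_n=\Q(\zeta_n)^+$ and look at $\tau_{n,\ell}:=\Ind_{G_{\Q(\zeta_n)}}^{G_{F_n}}\sigma_\ell$, a degree-$4$ representation of $G_{F_n}$. The trace information should show $\tau_{n,\ell}$ is $2$-isotypic over $\Q(\zeta_{2n})$ — i.e. $\sigma_\ell$ and its $\gal(\Q(\zeta_n)/F_n)$-conjugate become isomorphic after a quadratic twist by the character cutting out $\Q(\zeta_{2n})$. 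Then Proposition \ref{prop-save}, applied with $F=F_n$ and $K=\Q(\zeta_{2n})$ (a solvable, indeed abelian, extension of $F_n$), gives that $\tau_{n,\ell}$ is potentially automorphic for all $\ell$; inducing up the tower $F_n/\Q$ (solvable) yields the potential automorphy of $\rho_{n,\ell}^{new}$.

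For the genuine automorphy claims, two sub-cases. If $\sigma_\ell$ is potentially reducible, then by Proposition \ref{prop-auto} it is dihedral, induced from a finite-order-twist of a character of a quadratic CM extension of $\Q(\zeta_n)$; hence $\rho_{n,\ell}^{new}$ is induced from a character of a solvable (abelian-by-abelian) extension of $\Q$, so is automorphic by automorphic induction \cite{AC89}. If $n\le 6$, then $F_n=\Q(\zeta_n)^+=\Q$ (for $n\le 6$ one has $\Q(\zeta_n)^+=\Q$ except $n=5$, and $n=5$ is the congruence case which is automorphic outright), so $\tau_{n,\ell}$ is a representation of $G_\Q$ that is $2$-isotypic over the abelian extension $\Q(\zeta_{2n})/\Q$; applying Theorem \ref{thm:main2} with $F=\Q(\zeta_{2n})$ and $K=\Q(\zeta_{2n})$ (or the relevant solvable $K$ through which the induction factors) gives automorphy of $\tau_{n,\ell}$, hence of $\rho_{n,\ell}^{new}$ by solvable base change and induction.

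\textbf{The hard part} will be the trace computation underlying Theorem \ref{thm:trace} — determining $\Tr\sigma_{n,\ell}(\Frob_p)$ explicitly enough to (i) see it lands in a fixed number field independent of $\ell$ and (ii) detect the quadratic-twist relation making $\tau_{n,\ell}$ $2$-isotypic over $\Q(\zeta_{2n})$. This requires genuinely using the geometry of the elliptic surface $\E_n$: identifying the automorphism $(x,y)\mapsto$ (fiber-preserving involution composed with the $\zeta$-action) on $\E_n$, working out how it acts on $H^2_{et}$, and reducing the trace to a character sum of Weil type over $\F_p$ which one then evaluates or bounds. Everything downstream — compatibility, the $2$-isotypic structure, the QM realization in \S\ref{ss:5.7}, and the final applications of Propositions \ref{prop-save}, \ref{prop-automorphy} and Theorems \ref{thm-try}, \ref{thm:main2} — is then essentially bookkeeping with Clifford theory and solvable base change, modulo checking the numerical hypotheses ($\ell$ large, crystallinity, $\ell-1>6\kappa$ with $\kappa=3$) which hold for $\ell$ in the density-one set $\CL$.
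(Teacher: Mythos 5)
Your overall route coincides with the paper's: decompose $\rho_{n,\ell}|_{G_{\Q(\zeta_n)}}$ into $\zeta^*$-eigenspaces, prove the compatibility of the $2$-dimensional pieces via the trace computation on $\E_n$ (Theorem \ref{thm:trace}), descend to $F_n=\Q(\zeta_n)^+$, use the $2$-isotypy of $\tau_{n,\ell,i}$ over $\Q(\zeta_{2n})$ to invoke Proposition \ref{prop-save}, and treat $n\le 6$ via Theorem \ref{thm:main2} (resp.\ the congruence case $n=5$). However, there is a genuine gap in your treatment of the claim that $\rho_{n,\ell}^{new}$ is automorphic when $\sigma_\ell$ is potentially reducible. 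You apply Proposition \ref{prop-auto} directly to $\sigma_\ell$, a representation of $G_{\Q(\zeta_n)}$; but that proposition, and Proposition \ref{prop-purity} on which it rests, require the base field to be \emph{totally real}: Lemma \ref{lem:HT}(3) is what forces $\HT_\tau$ of the determinant to be independent of $\tau$, hence forces the Hodge--Tate weights to be $\{0,1-\k\}$ at every embedding, which in turn makes the two characters appearing after restriction distinct and yields the dihedral shape over a quadratic CM extension. Over $\Q(\zeta_n)$ (not totally real for $n\ge 3$) this argument is unavailable, and Clifford theory leaves open the isotypic alternative, in which $\sigma_\ell$ is a finite-image $2$-dimensional representation twisted by a geometric character; this possibility is consistent with Weil purity and is not known to be automorphic in general. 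So your step ``potentially reducible $\Rightarrow$ induced from a character of a quadratic CM extension of $\Q(\zeta_n)$ $\Rightarrow$ automorphic by induction'' does not follow as stated.

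The paper's repair is worth noting, since it also fills a smaller gap in your main argument. It first disposes (Lemma \ref{lem-reducible}) of the cases where $\sigma_{n,\ell,i}$ or $\sigma_{n,\ell,i}|_{G_{\Q(\zeta_{2n})}}$ is reducible by inducing geometric characters through abelian extensions. In the remaining case it works with the descended representation $\eta_{n,\ell,i}$ of $G_{F_n}$, produced by the dichotomy on whether $\tau_{n,\ell,i}$ is reducible over $F_n$ (if reducible, take its constituents; if irreducible, apply Theorem \ref{thm:Clifford} together with the $2$-isotypy over $\Q(\zeta_{2n})$). This $\eta_{n,\ell,i}$ lives over the totally real field $F_n$, is potentially reducible exactly when $\sigma_{n,\ell,i}$ is, so Proposition \ref{prop-auto} applies to it; automorphy then passes back to $\sigma_{n,\ell,i}$ because the two differ by a finite character over $\Q(\zeta_{2n})$, and to $\rho_{n,\ell}^{new}$ by solvable induction (Proposition \ref{prop-reducible}). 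The same descent step is precisely what verifies hypothesis (2) of Proposition \ref{prop-save} in your potential-automorphy argument: the existence of the $2$-dimensional $\eta$'s over $G_{F_n}$ is not formal bookkeeping but exactly this reducible/irreducible case analysis for $\tau_{n,\ell,i}$. Finally, in your $n\le 6$ application of Theorem \ref{thm:main2} the inducing field $K$ should be $\Q(\zeta_n)$, with $F=\Q(\zeta_{2n})$ serving as the field of $2$-isotypy, not $K=\Q(\zeta_{2n})$.
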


We recall the known result in the literature that $\rho_{n, \ell}^{new}$ is automorphic for $n \le 6$ and $n \ne 5$.  First,   two-dimensional Scholl representations of $G_\Q$ are
automorphic by  Theorem \ref{thm-known}.  In particular, $\rho_{2, \ell}$ is modular. In fact Beukers and Stienstra have already shown  in \cite{Beukers-Stienstra} that $\tilde \rho_{2, \ell, t}$ ($\cong \rho_{2, \ell}$) is  isomorphic to the $\ell$-adic Deligne representation attached to the congruence cusp form $\eta(4\tau)^6$. Using the method of Faltings-Serre, Li, Long and Yang proved in \cite{LLY} the automorphy of the $4$-dimensional $\rho_{3, \ell}$, which corresponds to an automorphic representation
of  $\GL_2\oplus \GL_2$ over $\Q$. When $n=4$,
 it was observed in \cite{ALL} that the space of $\rho_{4, \ell}^{new}$ admits quaternion multiplication. Using these symmetries they showed that the representation $\rho_{4, \ell}^{new}$ is automorphic, corresponding to an automorphic representation of $\GL_2 \times \GL_2$ over $\Q$, and hence also an automorphic representation of $\GL_4$ over $\Q$ by a result of Ramakrishnan \cite{Ramakrishnan}. Thus $\rho_{4, \ell}$ is automorphic. The same holds for $\rho_{6, \ell}^{new}$ by a similar argument carried out by Long  \cite{Long08}. Thus $\rho_{6, \ell}$ is automorphic. The paper \cite{ALLL} by Atkin, Li, Liu, and Long gives a conceptual explanation of the automorphy of $4$-dimensional Galois representations with QM,   including the cases $n=3,4,6$.

\subsection{The structure of $\rho_{n,\ell}^{new}$}
 Since the action of $\zeta$ is defined over the cyclotomic field $\Q(\zeta_n)$, each eigenspace of $\zeta$ is $G_{\Q(\zeta_n)}$-invariant. Denote the subrepresentation of $\rho_{n, \ell}|_{G_{\Q(\zeta_n)}}$ on the eigenspace with eigenvalue $\zeta_n^i$ by $\sigma_{n, \ell, i}$ so that

$$\rho_{n, \ell}|_{G_{\Q(\zeta_n)}} = \bigoplus_{1 \le i \le n-1} \sigma_{n, \ell, i}$$ and

$$\rho_{n, \ell}^{new}|_{G_{\Q(\zeta_n)}} = \bigoplus_{1 \le i \le n-1, ~(i, n)=1} \sigma_{n, \ell, i}.$$

As computed in \cite{Beukers-Stienstra}, the model (\ref{eq:En}) for $\E_n$ came from the homogenous model \begin{equation}\label{eq:5.4}X(Y-Z)(Z-X)-t_n^n (X-Y)YZ=0\end{equation} for the surface by setting
$X=-t_n^n y,Y=-t_n^n x-x^2, Z=-xy$. If, instead, we let $x=\frac ZX,y=\frac YZ$ and $s=t_n\cdot \frac{ X^2}{Y(X-Y)}$, then we get the following model
\begin{eqnarray}\label{eq:En2}
\E_n ~:~ s^n = (xy)^{n-1}(1-y)(1-x)(1-xy)^{n-1} =: f_n(x,y)
\end{eqnarray} which is more amenable to our computation. Observe that the action of $\zeta$ on $t_n$ in (\ref{eq:A})  translates to $\zeta(s) = \zeta_n^{-1}s$ for the model (\ref{eq:En2}).

Given $g \in G_\Q$, its action on $\Q(\zeta_n)$ is determined by the image $g (\zeta_n) = \zeta_n ^{\e(g)}$, where the exponent $\e(g)$ lies in $(\Z/ n \Z)^ \times $. For $(x, y, s) \in \E_n (\overline \Q)$ defined by the model (\ref{eq:En2}), we have
$$ g \circ \zeta (x, y, s) = g(x, y, \zeta_n^{-1}s) = (g(x),  g(y), g(\zeta_n)^{-1} g(s)) = \zeta^{\e(g)} \circ g(x, y, s),$$
that is, $g \circ \zeta = \zeta^{\e(g)} \circ g.$ Since the space of $\rho_{n,\ell}$ is contained in $H^2_{et} ({\E_n} \otimes_{\Q} \overline \Q, \Q_\ell )$, this relation between $g$ and $\zeta$ implies that the induced action of $g$ on $\rho_{n,\ell}$ sends the $\zeta_n^i$-eigenspace of $\zeta$ to the $g(\zeta_n)^i$-eigenspace of $\zeta$. Therefore the conjugate of $\sigma_{n, \ell, i}$ by $g$ is isomorphic to $\sigma_{n, \ell, \e(g)i}$. Consequently, for a fixed $n$, the $\sigma_{n, \ell, i}$'s with $(i,n)=1$ are conjugates of $\sigma_{n, \ell, 1}$ by $\gal(\Q(\zeta_n)/\Q)$, hence they are all $2$-dimensional and
 $\rho_{n , \ell} ^{new} = \Ind_{G_{\Q(\zeta_n)}}^{G_\Q} \sigma_{n , \ell, 1}$.
Further, for the complex conjugation $c$ in $G_\Q$, we have $\e(c) = -1$ so that, for $1 \le i \le n-1$,
$$ \sigma_{n, \ell, i}^c = \sigma_{n, \ell, n-i}.$$

We record the above discussion in

\begin{lemma}\label{lem-structrue}
\begin{enumerate}

\item For $g \in G_\Q$ and $1 \le i \le n-1$, we have $\sigma_{n, \ell, i}^g \cong \sigma_{n, \ell, \e(g)i},$ where $\e(g)$ is such that $g(\zeta_n) = \zeta_n ^{\e(g)}$. Therefore all $\sigma_{n, \ell, i}$ are $2$-dimensional, and
 $$\displaystyle \rho_{n , \ell} ^{new}|_{G_{\Q(\zeta_n)}} = \oplus_{g \in \gal(\Q(\zeta_n)/\Q)} \sigma_{n,\ell,i}^g \quad \text{ and } \quad  \rho_{n, \ell}^{new} \cong \Ind_{G_{\Q(\zeta_n)}}^{G_\Q} \sigma_{n , \ell, i} $$ for any $i$ coprime to $n$.

\item For the complex conjugation $c \in G_\Q$, we have $\sigma_{n, \ell, i}^c \cong \sigma_{n, \ell, n-i}.$
\end{enumerate}
\end{lemma}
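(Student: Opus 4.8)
The plan is to read off both assertions from the explicit model $(\ref{eq:En2})$ of the elliptic surface, on which $\zeta$ acts by $(x,y,s)\mapsto(x,y,\zeta_n^{-1}s)$ (fixing $x,y$), and to track how the $G_\Q$-action interacts with the eigenspace decomposition of the operator $\zeta^*$ on $W_{n,\ell}\subset H^2_{et}(\E_n\otimes_\Q\overline\Q,\Q_\ell)$.

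First I would establish the commutation relation at the level of $\overline\Q$-points. For $g\in G_\Q$ with $g(\zeta_n)=\zeta_n^{\e(g)}$ and $(x,y,s)\in\E_n(\overline\Q)$,
\[
g\bigl(\zeta(x,y,s)\bigr)=g\bigl(x,y,\zeta_n^{-1}s\bigr)=\bigl(g(x),g(y),\zeta_n^{-\e(g)}g(s)\bigr)=\zeta^{\e(g)}\bigl(g(x,y,s)\bigr),
\]
so $g\circ\zeta=\zeta^{\e(g)}\circ g$. Functoriality of \'etale cohomology (being mindful of the contravariance of $H^2$ and of the convention for the Galois action, which only replaces this relation by an equivalent one) then shows that the induced Galois action of $g$ on $W_{n,\ell}$ carries the $\zeta_n^i$-eigenspace of $\zeta^*$ isomorphically onto the $g(\zeta_n)^i=\zeta_n^{\e(g)i}$-eigenspace. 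Unwinding the definition of the conjugate representation recalled in Section~\ref{ss:Preliminaries}, this is precisely the statement $\sigma_{n,\ell,i}^g\cong\sigma_{n,\ell,\e(g)i}$, which is the heart of both parts.

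The remaining structural claims follow formally. The assignment $g\mapsto\e(g)$ identifies $\gal(\Q(\zeta_n)/\Q)$ with $(\Z/n\Z)^\times$, which acts simply transitively on $\{\,i\bmod n:\gcd(i,n)=1\,\}$; hence the $\phi(n)$ summands $\sigma_{n,\ell,i}$ with $(i,n)=1$ are permuted transitively under Galois conjugation, so they share a common dimension, and as they add up to $\rho_{n,\ell}^{new}$ of dimension $2\phi(n)$ each is $2$-dimensional. For $i$ with $\gcd(i,n)=e>1$ one runs the same argument inside $\rho_{n/e,\ell}^{new}$, of which $\sigma_{n,\ell,i}$ is the further restriction of one of the $2$-dimensional pieces $\sigma_{n/e,\ell,i/e}$, giving the claim for all $i$. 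Next, the eigenspace decomposition
\[
\rho_{n,\ell}^{new}\big|_{G_{\Q(\zeta_n)}}=\bigoplus_{(i,n)=1}\sigma_{n,\ell,i},
\]
together with $\sigma_{n,\ell,i}^g\cong\sigma_{n,\ell,\e(g)i}$, exhibits the right-hand side, for any fixed $i$ coprime to $n$, as $\bigoplus_{g\in\gal(\Q(\zeta_n)/\Q)}\sigma_{n,\ell,i}^g$, the sum of all $G_\Q$-translates of the subspace $\sigma_{n,\ell,i}$; since the stabilizer of this subspace in $G_\Q$ is exactly $G_{\Q(\zeta_n)}$ (because $\e(g)i\equiv i\bmod n$ forces $\e(g)=1$ when $(i,n)=1$), the standard recognition criterion for induced representations yields $\rho_{n,\ell}^{new}\cong\Ind_{G_{\Q(\zeta_n)}}^{G_\Q}\sigma_{n,\ell,i}$. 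Part (2) is the special case $g=c$: complex conjugation sends $\zeta_n$ to $\overline{\zeta_n}=\zeta_n^{-1}$, so $\e(c)=-1$ and $\sigma_{n,\ell,i}^c\cong\sigma_{n,\ell,-i}=\sigma_{n,\ell,n-i}$.

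The only genuinely delicate point is the passage from the geometric identity $g\circ\zeta=\zeta^{\e(g)}\circ g$ to the $\Q_\ell$-linear relation on $H^2_{et}$ — in particular making sure the exponent $\e(g)$, rather than $\e(g)^{-1}$, appears in the conclusion, which is where one must be careful about contravariance and about the normalization of the Galois action on cohomology. Everything else is elementary group theory of $(\Z/n\Z)^\times$ together with the recognition principle for inductions from a finite-index normal subgroup.
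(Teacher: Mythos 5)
Your argument is correct and is essentially the paper's own proof: the same computation $g\circ\zeta=\zeta^{\e(g)}\circ g$ on the model \eqref{eq:En2}, the same transfer to the $\zeta^*$-eigenspaces inside $H^2_{et}$, and the same deduction of the dimensions, the induction $\rho_{n,\ell}^{new}\cong\Ind_{G_{\Q(\zeta_n)}}^{G_\Q}\sigma_{n,\ell,i}$, and part (2) via $\e(c)=-1$. Your extra remarks on the case $\gcd(i,n)>1$ and on the stabilizer criterion for recognizing the induction only make explicit what the paper leaves implicit.
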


\subsection{Computing the trace of $\sigma_{n,\ell,i}$}

The aim of this subsection is to express the trace of $\sigma_{n,\ell,i}$ at Frobenius elements in terms of character sums by using the isomorphism $\rho_{n,\ell} \cong \tilde \rho_{n,\ell,t}$. As a result, the semi-simplification of $\sigma_{n, \ell, i}$, as $\ell$ varies, forms a compatible system.

Let $K$ be a finite extension of the cyclotomic field $\Q(\zeta_n)$. At a place $\gp$ of  $K$ not dividing $n$ with residue field $k_{\gp}$ of cardinality $N\gp$, we have $n | N\gp-1$. The  $n$th power residue symbol at $\gp$, denoted  by $\left ( \frac{}{\gp} \right)_n$,  is the $\langle \zeta_n \rangle \cup \{0\}$-valued function defined by
$$ \left ( \frac{a}{\gp} \right)_n \equiv a^{(N\gp-1)/n}  \pmod \gp \quad {\rm for~all~} a \in  \Z_K,$$ where $  \Z_K$ denotes the ring of integers of $K$. It induces a character $\xi_{\gp, n}$ of order $n$ on $k_{\gp}^\times$ extended to $k_\gp$ by $\xi_{\gp,n}(0)=0$. Also for fixed nonzero $a \in \Z_K$, as $\gp$ varies among the finite places of $K$  not dividing $na$, the power residue symbol defines a representation of the Galois group $\text{Gal}(K(\sqrt[n]{a})/K)$ such that
 \begin{equation}\label{eq:msymbol}
\left (\frac{a}{\fp}\right)_n=\frac{\AF_\fp(\sqrt[n]{a})}{\sqrt[n]{a}},
\end{equation} where $\AF_\fp$ is the the arithmetic Frobenius at  $\fp$.  This fact, originally due to Hilbert, is part of the class field theory.  See, for example, \cite{WIN3X}, sections  5 and 6 for more detail.

We analyze the rational points on $\E_n$ using the model (\ref{eq:En2}) given by
\begin{eqnarray*}
\E_n ~:~ s^n = (xy)^{n-1}(1-y)(1-x)(1-xy)^{n-1} =: f_n(x,y).
\end{eqnarray*}
The solutions to the above equation with $s=0$ lie on algebraic cycles. At a place $\gp$ of $K$ not dividing $n$, $\E_n$ has good reduction mod $\gp$ and the number of solutions to (\ref{eq:En2}) mod $\gp$ with $s \ne 0$ can be expressed in terms of the character sum
\begin{eqnarray}\label{ptsonEn}
 \sum_{1 \le i \le n} \sum_{x, y \in k_\gp} \xi_{\gp,n}^i(f_n(x,y)).
\end{eqnarray}
Noticing that $\xi_{\gp, n}$ has order $n$, we can rewrite the inner sum as
$$\sum_{ \overset{x, y \in k_\gp^\times}{ x \ne 1, y \ne 1, xy \ne 1}} \xi_{\gp,n}^i(g(x,y)),$$
where $g(x, y) = \frac{(1-x)(1-y)}{xy(1-xy)}$ is independent of $n$.   As $\xi_{\gp, n}^i$ has order $n/(n, i)$, the sum over $i$ with $(n, i)= d < n$ first occurs in the sum for $\E_{n/d}$.
Further,
the inner sum with $i=n$ contributes to the factors of the zeta function of $\E_n$ mod $\gp$ other than $Q(\E_n;\gp,T)$, and the sum in (\ref{ptsonEn}) over $1 \le i < n$ counts the number of $k_\gp$-rational points on the transcendental cycles mod $\gp$, hence it is equal to the coefficient of the second highest order term in $Q(\E_n;\gp,T)$. This shows that, inductively, for $\gp$ not dividing $\ell n$, we have
$$ \Tr \rho_{n,\ell} |_{G_K}(\Frob_\gp) = \Tr \tilde \rho_{n, \ell, t} |_{G_K}(\Frob_\gp) = \sum_{1 \le i < n} ~\sum_{x, y \in k_\gp} \xi_{\gp,n}^i(f_n(x,y)).$$
Moreover, the sum over $1 \le i < n$ with $(i, n)=1$ counts those points on $\E_n$ which are not contained in $\E_d$ with $d$ dividing $n$ properly, so it is the trace of the ``new'' part of $\tilde \rho_{n, \ell, t}$ evaluated at $\Frob_\gp$, i.e.,
\begin{eqnarray}\label{eq:tracerho}
 \Tr \rho_{n,\ell}^{new}|_{G_K}(\Frob_\gp) =  \Tr \tilde \rho_{n, \ell, t}^{new}|_{G_K}(\Frob_\gp) = \sum_{\overset{1 \le i < n} {(n, i)=1}} ~\sum_{x, y \in k_\gp} \xi_{\gp,n}^i(f_n(x,y)).
\end{eqnarray} Both integers are independent of the auxiliary prime $\ell$.

In \cite{DFLST} the authors considered certain families of generalized Legendre curves whose associated Galois representations have a similar decomposition as a sum of new parts. It was shown there that each new part restricted to the Galois group of a suitable cyclotomic field further decomposes into a direct sum of degree-$2$ representations whose trace at the Frobenius elements at unramified places are in fact character sums occurring in counting rational points of the underlying curve over the residue fields. This result is further extended in
\cite{WIN3X} to more general curves. In the theorem below we prove that analogous result holds for restrictions of our 2-dimensional representations $\sigma_{n,\ell,i}$ to finite index subgroups of $G_{\Q(\zeta_n)}$. The argument below follows \S6.3 of \cite{WIN3X}.

\begin{theorem}\label{thm:trace} Fix $n \ge 2$ and $\ell$. Let $K$ be a finite extension of $\Q(\zeta_n)$. Then $\sigma_{n, \ell, i} |_{G_K}$ for $1 \le i \le n-1$ are unramified at each place $\gp$ of $K$ not dividing $\ell n$ and satisfy

$$ \Tr ~\sigma_{n, \ell, i} |_{G_K} (\Frob_\gp)=   \sum_{x, y \in k_\gp} \xi_{\gp,n}^{i}(f_n(x,y)).$$
\end{theorem}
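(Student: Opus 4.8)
The plan is to realise $\rho_{n,\ell}$ inside the \'etale cohomology of $\E_n$ and to read off the trace of each $\zeta^{*}$-eigencomponent from a point count on $\E_n$ twisted by powers of the automorphism $\zeta$. Fix a place $\gp$ of $K$ with $\gp\nmid \ell n$, put $q=N\gp$ (so $n\mid q-1$ since $\zeta_n\in K$), and normalise $\xi_{\gp,n}$ so that its values lie in $\langle\zeta_n\rangle$ compatibly with the reduction $\O_K\to k_\gp$. Since $\gp$ lies over a place of $\Q(\zeta_n)$ we have $\e(\Frob_\gp)=1$, so by the relation $g\circ\zeta=\zeta^{\e(g)}\circ g$ the Frobenius $\Frob_\gp$ commutes with $\zeta^{*}$ on $\rho_{n,\ell}$; hence $\sigma_{n,\ell,i}|_{G_K}$ is unramified at $\gp$ (being a subrepresentation of $\rho_{n,\ell}$, which is unramified outside $n\ell$), and with the idempotent $e_i=\frac1n\sum_{j=0}^{n-1}\zeta_n^{-ij}(\zeta^{*})^{j}$ cutting $\sigma_{n,\ell,i}$ out of $\rho_{n,\ell}=\bigoplus_{i=1}^{n-1}\sigma_{n,\ell,i}$ one gets
$$\Tr\,\sigma_{n,\ell,i}|_{G_K}(\Frob_\gp)=\frac1n\sum_{j=0}^{n-1}\zeta_n^{-ij}\,\Tr\!\big((\zeta^{*})^{j}\Frob_\gp^{*}\mid\rho_{n,\ell}\big).$$

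Next I would pass to $\rho_{n,\ell}\cong\tilde\rho_{n,\ell,t}\subset H^{2}_{et}(\E_n\otimes\overline\Q,\Q_\ell)$. Using the decomposition $H^{2}_{et}=\tilde\rho_{n,\ell,a}\oplus\tilde\rho_{n,\ell,t}$, the vanishing $H^{1}_{et}=H^{3}_{et}=0$, and the fact that $H^{0}_{et}$ and $H^{4}_{et}$ are one-dimensional with $\zeta^{*}$ acting trivially (all from the Hodge diamond of $\E_n$), the Grothendieck--Lefschetz formula applied to the correspondence $\zeta^{j}\circ\Frob_q$ on the smooth projective model $\E_n$ over $k_\gp$ reads
$$\#\{P\in\E_n(\overline{k_\gp}):\Frob_q(P)=\zeta^{j}(P)\}=1+q^{2}+\Tr\!\big((\zeta^{*})^{-j}\Frob_q^{*}\mid\tilde\rho_{n,\ell,a}\big)+\Tr\!\big((\zeta^{*})^{-j}\Frob_q^{*}\mid\rho_{n,\ell}\big).$$
I would then compute the left side through the model \eqref{eq:En2}, $s^{n}=f_n(x,y)$: on the open locus where $x,y$ are finite and $f_n(x,y)\neq0$ (so $s\neq0$) the condition becomes $x,y\in k_\gp$ together with $s^{q-1}=\zeta_n^{j}$, and since $s^{q-1}=f_n(x,y)^{(q-1)/n}=\xi_{\gp,n}(f_n(x,y))$ is independent of the chosen $n$-th root $s$, this part contributes exactly $n\cdot\#\{(x,y)\in k_\gp^{2}:f_n(x,y)\neq0,\ \xi_{\gp,n}(f_n(x,y))=\zeta_n^{j}\}$; after applying $\frac1n\sum_j\zeta_n^{-ij}(\cdot)$ and using that for each such $(x,y)$ there is a unique $j$ realising the equality, this collapses to $\sum_{x,y\in k_\gp}\xi_{\gp,n}^{i}(f_n(x,y))$, the locus $f_n=0$ being absorbed by $\xi_{\gp,n}^{i}(0)=0$.

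The heart of the argument is to match the remaining "boundary'' of $\E_n$ — the two fibres over $t_n=0$ and $t_n=\infty$, on which $\zeta$ acts trivially because the Weierstrass equation \eqref{eq:En} depends only on $t_n^{n}$; the singular fibres over the other cusps, which $\zeta$ permutes in orbits of length $n$; the points at infinity; and the exceptional divisors of the desingularisation of \eqref{eq:En2} — with the terms $1+q^{2}$ and $\Tr((\zeta^{*})^{-j}\Frob_q^{*}\mid\tilde\rho_{n,\ell,a})$, following \S6.3 of \cite{WIN3X}. Concretely one must show that after the $\zeta_n^{i}$-projection with $i\not\equiv0\pmod n$ all of these contribute nothing, i.e.\ they cancel; equivalently, that the "new'' $\zeta^{*}$-eigenspaces of $H^{2}_{et}(\E_n\otimes\overline\Q,\Q_\ell)$ are purely transcendental. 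Granting this, comparing the $\zeta_n^{i}$-Fourier components of the two expressions for $\#\{P:\Frob_q(P)=\zeta^{j}(P)\}$ gives $\Tr\,\sigma_{n,\ell,i}|_{G_K}(\Frob_\gp)=\sum_{x,y\in k_\gp}\xi_{\gp,n}^{i}(f_n(x,y))$ for $i$ coprime to $n$, with the sign normalisations of \eqref{eq:A} and \eqref{eq:msymbol} being exactly what aligns the exponents on the two sides; for $d=(i,n)>1$ one reduces to the coprime case by replacing $\E_n$ with the pulled-back sub-elliptic-surface $\E_{n/d}$, on which $\sigma_{n,\ell,i}=\sigma_{n/d,\ell,i/d}$ and the corresponding character sums agree because $f_n/f_{n/d}$ is an $(n/d)$-th power on $k_\gp^{\times}$.

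I expect the main obstacle to be precisely this boundary bookkeeping: pinning down the Kodaira types of the singular fibres of $\E_n\to X_n$, verifying that $\zeta$ fixes the fibres over $t_n=0,\infty$ pointwise while only permuting the others, and proving that the algebraic cycles and exceptional divisors make no contribution to the $\zeta_n^{i}$-eigencomponent for $i\not\equiv0$. Once this geometric input is in place, the remainder is the routine character-sum orthogonality and the standard Grothendieck--Lefschetz formalism used above.
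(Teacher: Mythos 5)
Your route---equivariant point counting via the Lefschetz numbers of $\Frob_q\circ\zeta^{j}$ on $\E_n$ over $k_\gp$, followed by Fourier inversion in $j$---is genuinely different from the paper's, but it has a real gap exactly at the step you defer as ``boundary bookkeeping'', and that step is the heart of the matter rather than routine. Your argument needs either that the $\zeta_n^{i}$-eigenspaces of $H^2_{et}(\E_n\otimes_\Q\overline\Q,\Q_\ell)$ with $(i,n)=1$ are purely transcendental, or an explicit evaluation of $\Tr\big((\zeta^{*})^{-j}\Frob_q^{*}\mid \tilde\rho_{n,\ell,a}\big)$ that cancels against the boundary point count. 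The first statement is false: on the smooth model of $\E_n$ the fibres over $t_n=0,\infty$ are of type $I_{5n}$ (pullbacks of the two $I_5$ fibres of the $\Gamma^1(5)$ surface under the totally ramified degree-$n$ base change $\tau=t_n^{\,n}$), and the deck transformation $\zeta$ does \emph{not} fix them pointwise---it rotates each $I_{5n}$ cycle of rational curves, acting freely on its components with five orbits of length $n$. Consequently the N\'eron--Severi part $\tilde\rho_{n,\ell,a}$ contains every nontrivial character of $\langle\zeta^{*}\rangle$ with multiplicity $10$, so the full $\zeta_n^{i}$-eigenspace of $H^2$ has dimension $12$, of which only $2$ dimensions are $\sigma_{n,\ell,i}$. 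Your assertion that ``$\zeta$ acts trivially on the fibres over $t_n=0,\infty$'' is true only for the singular Weierstrass model, not for its resolution. So what remains to be proved is a genuine computation: the joint action of $\Frob_\gp$ and $\zeta$ on the fibre components at $t_n=0,\infty$, on the $n$ conjugate $I_1$ fibres, on the ($5$-torsion) sections and exceptional divisors, together with the points lost by the affine chart $s^n=f_n(x,y)$ (points at infinity and $s=0$), and the verification that all of these cancel in each new eigencomponent. Without this the claimed identity is not established.

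The paper avoids precisely this. It only uses the \emph{untwisted} aggregate relation \eqref{eq:tracerho} (sum over $i$ coprime to $n$ of the character sums equals the trace of Frobenius on the new transcendental part), applied not just to $\E_n$ but to the $\phi(n)$ cyclic twists $\T_{n,c,r}\colon s^n=c^r f_n(x,y)$ with $\xi_{\gp,n}(c)=\zeta_n$. Via the isomorphism $T\colon (x,y,s)\mapsto(x,y,\sqrt[n]{c}\,s)$ over $F=K(\sqrt[n]{c})$ and the relation $\Frob_{\gp}\circ T=T\circ\zeta\circ\Frob_{\gp}$ coming from the power residue symbol \eqref{eq:msymbol}, one gets $\Tr\,\tau_{n,\ell,c,r,i}(\Frob_\gp)=\xi_{\gp,n}(c^r)^{i}\,\Tr\,\sigma_{n,\ell,i}|_{G_K}(\Frob_\gp)$, and the individual traces are then extracted by inverting a nonsingular Vandermonde system in $r$. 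This never requires knowing how $\zeta$ or Frobenius acts on the algebraic classes or the boundary, which is exactly where your proposal is incomplete; to salvage your approach you would have to carry out the N\'eron--Severi and boundary computation in full, which is substantially more work than the twisting argument.
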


\begin{proof}  First we remark that it suffices to prove the theorem for all pairs $\{n, i\}$ with $(i, n) = 1$. This is because for the case $(i, n) = d >1$, we have $\sigma_{n, \ell, i} \cong \sigma_{n/d, \ell, i/d}$ and $\xi_{\gp, n}^i (f_n(x, y))= \xi_{\gp, n/d}^{i/d}(f_{n/d}(x, y))$ as observed before so that the statement in this case is the same as that with the pair $\{n, i\}$ replaced by $\{n/d, i/d\}$. Therefore we assume $(i, n) = 1$ in the argument below.

Let $\gp$ be a place of $K$ not dividing $n \ell$ with residue field $k_{\gp}$. Then  $\rho_{n, \ell}|_{G_K}$ and hence $\sigma_{n, \ell, i}|_{G_K}$ are unramified at $\gp$ for all $i$. Choose an element $c \in \Z_{K}$ such that $\xi_{\gp, n}(c) =\zeta_n$ is a primitive $n$th root of unity. Then $F = K(\sqrt[n]{c})$ is an abelian extension of $K$ of degree at most $n$ since $K$ contains $\zeta_n$. It follows from (\ref{eq:msymbol}) that the Frobenius element $\Frob_\gp \in \gal(F/K)$, being the inverse of the arithmetic Frobenius $\AF_\fp$ at $\gp$, maps $\sqrt[n]{c}$ to $\xi_{\gp, n}(c)^{-1} \sqrt[n]{c} = \zeta_n^{-1} \sqrt[n]{c}$, hence it has order $n$. Therefore $F$ is a cyclic degree $n$ extension of $K$ and $\AF_\gp$ generates $\gal(F/K)$. The dual of $\gal(F/K)$ is generated by the character $\xi_c$ satisfying $\xi_c( \AF _{\gp})  =\left ( \frac{c}{\fp}\right)_n = \zeta_n$.
\medskip

For each integer $r \ge 0$, consider the twist $\T_{n, c, r}$ of $\E_n$ over $K$ by $c^r$ defined by

$$\T_{n, c, r}~:~ s^n = c^r f_n(x, y).$$
Note that $\T_{n,c,0} = \E_n$.
Denote by $\tilde \rho_{n, \ell, c, r, t}$ the $\ell$-adic $G_{K}$-action on the transcendental lattice of $\T_{n, c, r}$ and by $\tilde \rho_{n, \ell, c, r, t}^{new}$ its new part. The same argument as before shows that
\begin{eqnarray}
 \Tr \tilde \rho_{n, \ell, c, r, t}^{new}(\Frob_\gp) &=& \sum_{1 \le i < n, ~(n, i)=1} ~\sum_{x, y \in k_\gp} \xi_{\gp,n}^{i}(c^r f_n(x,y)) \\
&=&  \sum_{1 \le i < n, ~(n, i)=1} ~ \xi_{\gp,n}^{i}(c^r) \sum_{x, y \in k_\gp} \xi_{\gp,n}^{i}(f_n(x,y)).
\end{eqnarray}
The map $T~: ~(x, y, s) \mapsto (x, y, \sqrt[n]{c}s)$ yields an isomorphism over $F$ from $\T_{n,c,r}$ to $\T_{n,c,r+1}$ for $r \ge 0$.  Therefore $\tilde \rho_{n, \ell, c, r,t}|_{G_F} \cong \rho_{n, \ell} |_{G_F}$ and the same holds for their new part:
\begin{eqnarray}\label{equaloverGF}
 \tilde \rho_{n, \ell, c, r,t}^{new}|_{G_F} \cong \rho_{n, \ell}^{new} |_{G_F}.
\end{eqnarray}On the representation spaces, $T$ induces a map $T^*: \tilde \rho_{n, \ell, c, r+1, t}^{new}\rightarrow \tilde \rho_{n, \ell, c, r, t}^{new}.$
Further, the map $\zeta: (x, y, s) \mapsto (x, y, \zeta_n^{-1} s)$ is an automorphism on $\T_{n, c, r}$ defined over $K \supset \Q(\zeta_n)$, hence it induces an operator $\zeta^*$ of order $n$ on the representation space of $\tilde \rho_{n,\ell,c,r,t}$. This operator decomposes $\tilde \rho_{n, \ell, c, r,t}^{new}$ into a direct sum of $\phi(n)$ subrepresentations $\tau_{n, \ell, c, r, i}$, where $1 \le i \le n-1$ and $(n, i) = 1$, acting on the eigenspace of $\zeta^*$ with eigenvalue $\zeta_n^i$. Since $\zeta$ and $T$ commute over $F$,  $T^*$ yields an isomorphism from the  $\zeta_n^i$-eigenspace of $\zeta^*$ on $\tilde \rho_{n, \ell, c, r+1, t}^{new}$ to that on $\tilde \rho_{n, \ell, c, r, t}^{new}$. Combined with (\ref{equaloverGF}), we obtain
$$\sigma_{n, \ell, i}|_{G_F} \cong \tau_{n, \ell, c, r, i} |_{G_F}$$
for all $1 \le i \le n-1$ coprime to $n$.

On the other hand, for $(x, y, s) \in \T_{n,c,r}( \overline {\Q})$, by \eqref{eq:msymbol} we have
\begin{multline*}\Frob_{\gp,r+1} \circ T(x, y, s) = \Frob_{\gp,r+1}(x, y, \sqrt[n]{c}s)= (\Frob_{\gp,r}(x), \Frob_{\gp,r}(y), \zeta_n^{-1} \sqrt[n]{c}~ \Frob_{\gp,r}(s))\\ =T \circ \zeta \circ \Frob_{\gp,r}(x,y,s).\end{multline*} Here, for the sake of clarity, we put subscripts on $\Frob_{\fp}$ to keep track of the spaces on which it acts. Therefore, on the space of $\tilde \rho_{n,\ell, c, r, t}^{new}$ we have $T^*\circ \Frob_{\fp,r+1}\circ (T^*)^{-1}  = \Frob_{\fp,r} \circ \zeta^*$. Here, by abuse of notation, we use $\Frob_{\fp,r}$  to denote the action on $\tilde \rho_{n,\ell, c, r, t}^{new}$ induced by the geometric  Frobenius at $\gp$.
By construction, $\zeta^*$  acts on the representation spaces of $\tau_{n,\ell, c, r,i}$ and $\tau_{n,\ell, c, r,i} |_{G_F}$ via multiplication by $\zeta_n^i = \xi_{\gp, n}(c)^{i}$; this shows that
$$ \Tr ~\tau_{n, \ell, c, r+1, i}(\Frob_{\gp,r+1}) = \xi_{\gp, n}(c)^{i}~ \Tr ~\tau_{n, \ell, c, r, i}(\Frob_{\gp,r})$$
and recursively this gives

$$ \Tr ~\tau_{n, \ell, c, r, i}(\Frob_{\gp,r}) = \xi_{\gp, n}(c^r)^{i} ~\Tr ~\sigma_{n, \ell, i} |_{G_K}(\Frob_{\gp}).$$
Consequently we obtain, for $r \ge 0$,
\begin{eqnarray}
 \Tr \tilde \rho_{n, \ell, c, r, t}^{new}(\Frob_{\gp,r}) = \sum_{1 \le i < n, ~(n, i)=1} \xi_{\gp,n}(c^r)^{i}~ \Tr ~\sigma_{n, \ell, i} |_{G_K}(\Frob_\gp).
\end{eqnarray}

Compare this with (5.7) for $0 \le r \le \phi(n)-1$. We regard both as a system of $\phi(n)$ linear equations whose coefficient matrix is a nonsingular Vandermonde matrix $\big(\xi_{\gp,n}(c^r)^{ i} \big)_{\overset{0 \le r  \le \phi(n)-1} {1 \le i <n,(n, i)=1}}$
 (because $\xi_{\gp, n}(c) = \zeta_n$ has order $n$). Hence the system has a unique solution, from which it follows that
$$ \Tr ~\sigma_{n, \ell, i}|_{G_K} (\Frob_\gp)=   \sum_{x, y \in k_\gp} \xi_{\gp,n}^{i}(f_n(x,y)),$$
as desired.

\end{proof}

Applying Theorem \ref{thm:trace} to $K = \Q(\zeta_n)$, we get that, for each place $\gp$ of $\Q(\zeta_n)$ not dividing $n \ell$,  $\Tr ~\sigma_{n, \ell, i} (\Frob_\gp)\in \Q(\zeta_n)$ is independent of $\ell$. Further, for such $\gp$  there is a quadratic  extension $K(\gp)$ of $\Q(\zeta_n)$ which has only one place $\wp$ above $\gp$. Applying Theorem \ref{thm:trace} to $K = K(\gp)$, we conclude that $\Tr ~\sigma_{n, \ell, i} |_{G_{K(\gp)}} (\Frob_\wp)\in \Q(\zeta_n)$ is also independent of $\ell$. The same holds for $\det \sigma_{n, \ell, i} (\Frob_\gp) = \frac{1}{2} \big((\Tr ~\sigma_{n, \ell, i} (\Frob_\gp))^2 - \Tr ~\sigma_{n, \ell, i} |_{G_{K(\gp)}} (\Frob_\wp) \big)$.
Combined, this shows that for all finite places $\gp$ of $\Q(\zeta_n)$ not dividing $n$, the characteristic polynomial of $\sigma_{n, \ell, i} (\Frob_\gp)$ is independent of $\ell$ not divisible by $\gp$.
In view of the definition of a compatible system of Galois representations in \S \ref{subsection:CS},  this proves

\begin{cor} \label{cor-compatible}
 For each $1 \le i \le n-1$, the semi-simplifications of $\sigma_{n,\ell,i}$ form a compatible system of $\ell$-adic representations of $G_{\Q(\zeta_n)}$. Further, all $\Tr ~\sigma_{n, \ell, i}$ are $\Q(\zeta_n)$-valued and $\Tr ~\sigma_{n, \ell, i}$ is obtained from $\Tr ~\sigma_{n, \ell, 1}$ with $\zeta_n$ replaced by $\zeta_n^i$.
\end{cor}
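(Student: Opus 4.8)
The plan is to derive the whole statement from Theorem \ref{thm:trace} by a formal argument. To exhibit a compatible system of $\ell$-adic representations of $G_{\Q(\zeta_n)}$ in the sense of \S\ref{subsection:CS} it suffices, for fixed $i$ with $1\le i\le n-1$, to find a finite set $S$ of places of $\Q(\zeta_n)$ outside which every $\sigma_{n,\ell,i}$ is unramified, and, for each place $\gp$ of $\Q(\zeta_n)$ outside $S$, a polynomial $Q_\gp(X)\in\Q(\zeta_n)[X]$ which is the characteristic polynomial of $\sigma_{n,\ell,i}(\Frob_\gp)$ for all $\ell$ coprime to $\gp$. Since $\rho_{n,\ell}$ is unramified outside $n\ell$ by property (1) of \S5.3, so is its subrepresentation $\sigma_{n,\ell,i}$, and one takes $S$ to be the set of places of $\Q(\zeta_n)$ dividing $n$.

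The key step is to show that, for $\gp\nmid n\ell$, both the trace and the determinant of $\sigma_{n,\ell,i}(\Frob_\gp)$ lie in $\Q(\zeta_n)$ and are independent of $\ell$. The trace is handled at once by Theorem \ref{thm:trace} applied over $K=\Q(\zeta_n)$: it equals $\sum_{x,y\in k_\gp}\xi_{\gp,n}^{i}(f_n(x,y))$, a $\Z[\zeta_n]$-valued character sum depending only on $\gp$, $n$, $i$. For the determinant I would use the identity $\det M=\tfrac12\big((\Tr M)^2-\Tr(M^2)\big)$ for a $2\times2$ matrix $M=\sigma_{n,\ell,i}(\Frob_\gp)$, together with $\Tr(M^2)=\Tr\,\sigma_{n,\ell,i}(\Frob_\gp^2)$. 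Choosing a quadratic extension $K(\gp)/\Q(\zeta_n)$ inert at $\gp$ (possible since the local field at $\gp$ has an unramified quadratic extension), its unique place $\wp$ above $\gp$ satisfies $\Frob_\wp=\Frob_\gp^2$, so Theorem \ref{thm:trace} applied over $K=K(\gp)$ expresses $\Tr\,\sigma_{n,\ell,i}(\Frob_\gp^2)$ as a character sum over $k_\wp$, again $\Z[\zeta_n]$-valued and free of $\ell$. Hence $Q_\gp(X):=X^2-\Tr\,\sigma_{n,\ell,i}(\Frob_\gp)X+\det\,\sigma_{n,\ell,i}(\Frob_\gp)\in\Q(\zeta_n)[X]$ is independent of $\ell$. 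Taking $E=\Q(\zeta_n)$, the quadruple $\{E,S,\{Q_\gp(X)\},\{\sigma_{n,\lambda,i}\}\}$ (with each $\sigma_{n,\lambda,i}$ replaced by its semi-simplification) then satisfies the axioms of Definition \ref{def:compatiblefamily}; and since a semisimple $\ell$-adic representation is determined up to isomorphism by the characteristic polynomials of its Frobenii (Brauer--Nesbitt plus Chebotarev, cf. \cite{Serre}), the semi-simplifications of the $\sigma_{n,\ell,i}$ do form a compatible system. The last assertion is then immediate from the trace formula: each value $\xi_{\gp,n}(f_n(x,y))$ is a power of $\zeta_n$ and $\xi_{\gp,n}^{i}(f_n(x,y))=\big(\xi_{\gp,n}(f_n(x,y))\big)^i$, so substituting $\zeta_n\mapsto\zeta_n^i$ in the expression for $\Tr\,\sigma_{n,\ell,1}$ produces the expression for $\Tr\,\sigma_{n,\ell,i}$.

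I do not anticipate a genuine obstacle, because the analytic content --- the identification of $\Tr\,\sigma_{n,\ell,i}(\Frob_\gp)$ with an explicit character sum --- is already supplied by Theorem \ref{thm:trace}. The only point calling for (routine) care is the determinant: one must confirm that a quadratic extension of $\Q(\zeta_n)$ with a single prime $\wp$ above $\gp$ exists and that the Frobenius at $\wp$ is exactly $\Frob_\gp^2$, so that Theorem \ref{thm:trace}, reapplied over that extension, computes $\Tr\,\sigma_{n,\ell,i}(\Frob_\gp^2)$ and thus determines $\det\,\sigma_{n,\ell,i}(\Frob_\gp)$.
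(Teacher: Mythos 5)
Your argument is essentially the paper's own proof: the trace is handled by Theorem \ref{thm:trace} over $K=\Q(\zeta_n)$, the determinant by reapplying the theorem over a quadratic extension with a single place $\wp$ above $\gp$ (so that $\Frob_\wp=\Frob_\gp^2$) together with the identity $\det \sigma_{n,\ell,i}(\Frob_\gp)=\tfrac12\bigl((\Tr \sigma_{n,\ell,i}(\Frob_\gp))^2-\Tr \sigma_{n,\ell,i}|_{G_{K(\gp)}}(\Frob_\wp)\bigr)$, and the $\zeta_n\mapsto\zeta_n^i$ substitution is read off from the character-sum formula. Your explicit care about the existence of an inert quadratic extension is a point the paper leaves implicit, but it is the same route.
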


Another consequence of Theorem \ref{thm:trace} is the following character sum estimate, resulting from the
fact that the $2$-dimensional representation $\sigma_{n, \ell, i}$ is contained in the second \'etale cohomology of $\E_n$ and the Riemann Hypothesis for the reduction of $\E_n$ at a good place holds.

\begin{cor}\label{cor:5.4} Let $K$ be a finite extension of $\Q(\zeta_n)$. Let $\gp$ be a finite place of $K$ not dividing $n$ whose  residue field $k_\gp$ has cardinality $N\gp$. For $1 \le i \le n-1$ coprime to $n$ we have

$$ |\sum_{x, y \in k_\gp} \xi_{\gp,n}^{i}(f_n(x,y))| \le 2 N\gp.$$

\end{cor}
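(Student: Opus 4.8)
The plan is to deduce the estimate directly from Theorem \ref{thm:trace} together with the Weil bounds. As in the proof of Theorem \ref{thm:trace} it suffices to treat pairs $\{n,i\}$ with $(i,n)=1$: for $(i,n)=d>1$ one has $\xi_{\gp,n}^i(f_n(x,y))=\xi_{\gp,n/d}^{i/d}(f_{n/d}(x,y))$, so the claim reduces to the pair $\{n/d,i/d\}$. Assuming $(i,n)=1$, the representation $\sigma_{n,\ell,i}$ is the $2$-dimensional representation of $G_{\Q(\zeta_n)}$ appearing in Lemma \ref{lem-structrue}. Fix any auxiliary prime $\ell$ whose residual characteristic does not lie below $\gp$; this is harmless, since the character sum does not involve $\ell$. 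Then Theorem \ref{thm:trace} applies and gives that $\sigma_{n,\ell,i}|_{G_K}$ is unramified at $\gp$ and
$$ \sum_{x,y\in k_\gp}\xi_{\gp,n}^i(f_n(x,y)) \;=\; \Tr\,\sigma_{n,\ell,i}|_{G_K}(\Frob_\gp). $$

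Next I would invoke that $\sigma_{n,\ell,i}$ is, by its construction in \S\ref{sec:wt-3}, a $2$-dimensional subrepresentation of $\rho_{n,\ell}|_{G_{\Q(\zeta_n)}}$, that $\rho_{n,\ell}$ is the transcendental part $\tilde\rho_{n,\ell,t}$ of $H^2_{et}(\E_n\otimes_\Q\overline\Q,\Q_\ell)$, unramified outside $n\ell$, and that its Frobenius characteristic polynomials lie in $\Z[X]$. By Scholl's purity statement (cf. \cite{sch85b}, \S5.3), which for weight $\kappa=3$ ultimately rests on Deligne's Riemann Hypothesis for the good reduction of $\E_n$ at $\gp\nmid n$, every eigenvalue of $\rho_{n,\ell}(\Frob_\gp)$ is an algebraic number all of whose archimedean absolute values equal $N\gp$ (here one uses $(N\gp)^{(\kappa-1)/2}=N\gp$). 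In particular the two eigenvalues $\alpha_1,\alpha_2$ of $\Frob_\gp$ acting on $\sigma_{n,\ell,i}$ — a sub-multiset of the eigenvalues of $\rho_{n,\ell}(\Frob_\gp)$ — have this property.

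Finally, for any embedding of $\Q(\zeta_n)$, hence of the number field generated by $\alpha_1$ and $\alpha_2$, into $\C$, the triangle inequality yields
$$ \Bigl|\sum_{x,y\in k_\gp}\xi_{\gp,n}^i(f_n(x,y))\Bigr| \;=\; |\alpha_1+\alpha_2| \;\le\; |\alpha_1|+|\alpha_2| \;=\; 2N\gp, $$
which is the asserted bound. There is no genuine obstacle here beyond keeping the identifications consistent: since the characteristic polynomial of $\rho_{n,\ell}(\Frob_\gp)$ has integer coefficients, $\alpha_1,\alpha_2$ are honest algebraic numbers and the identity of Theorem \ref{thm:trace} is an identity in $\overline\Q$, so the bound on $|\alpha_1|+|\alpha_2|$, valid at every archimedean place, transfers to the cyclotomic integer on the left independently of the chosen embeddings $\overline\Q\hookrightarrow\overline\Q_\ell$ and $\overline\Q\hookrightarrow\C$.
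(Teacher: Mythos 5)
Your proposal is correct and follows essentially the same route as the paper: the paper also deduces the estimate from Theorem \ref{thm:trace} together with the fact that $\sigma_{n,\ell,i}$ is contained in $H^2_{et}(\E_n\otimes_\Q\overline\Q,\Q_\ell)$ and the Riemann Hypothesis (weight-$2$ purity) for the good reduction of $\E_n$ at $\gp\nmid n$, so that the two Frobenius eigenvalues have absolute value $N\gp$ and the trace is bounded by $2N\gp$. The only difference is that you make explicit the choice of auxiliary $\ell$ and the transfer of the archimedean bound to the cyclotomic-integer value of the character sum, points the paper leaves implicit.
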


 At a place $\gp$ of $\Q(\zeta_n)$ not dividing $n$ with residue field $k_\gp$, we can express

\begin{eqnarray*}
\Tr \sigma_{n,\ell, i} (\Frob_\gp) &=& \sum_{x, y \in k_\gp}\xi_{\gp,n}^i(f_n(x,y)) \\
&=& \sum_{x, y \in k_\gp} \xi_{\gp,n}^{-i}(x)\xi_{\gp,n}^i (1-x) \xi_{\gp,n}^{-i}(y) \xi_{\gp,n}^i(1-y) \xi_{\gp,n}^{-i} (1-xy),
\end{eqnarray*} which,  by Corollary 3.14 (i) of Greene \cite{Greene}, is equal to $|k_\gp|^2$ times the hypergeometric function $_3F_2\left({\xi_{\gp,n}^i, ~\xi_{\gp,n}^{-i}, ~\xi_{\gp,n}^{-i} \atop 1, ~1} | 1 \right)$ over the finite field $k_\gp$. Using the identities (4.26) and (4.23) \emph{loc. cit.}, we arrive at the relation
\begin{eqnarray}\label{eq:cxconj}
\sum_{x, y \in k_\gp} \xi_{\gp,n}^i(f_n(x,y)) = \Big(\frac{-1}{\gp}\Big)_n^i \sum_{x, y \in k_\gp} \xi_{\gp,n}^{n-i}(f_n(x,y)),
\end{eqnarray}
for all $1 \le i < n$ and finite places $\gp$ of $\Q(\zeta_n)$ not dividing $n$. By (\ref{eq:msymbol}), the map $\Frob_\gp \mapsto  \Big(\frac{-1}{\gp}\Big)_n$ is a character $\xi_{n, -1}$ of $G_{\Q(\zeta_n)}$.
Combined with Theorem \ref{thm:trace} this gives
\begin{eqnarray}\label{eq:twist}
(\sigma_{n, \ell, i} )^{ss} \cong  (\sigma_{n, \ell, n-i} )^{ss}\otimes \xi_{n,-1}^i.
\end{eqnarray} Here $\sigma^{ss}$ denotes the semi-simplification of the representation $\sigma$.
The kernel of $\xi_{n,-1}$ is $G_{\Q(\zeta_{2n})}$. When $n$ is odd, $\Q(\zeta_{2n}) = \Q(\zeta_{n})$ and hence $\xi_{n,-1}$ is trivial; while for $n$ even, $\Q(\zeta_{2n})$ is a quadratic extension of $\Q(\zeta_n)$ so that $\xi_{n,-1}$ has order $2$.

{Since a semi-simple representation is determined by its trace, we summarize the above discussion below.

\begin{prop}\label{nodd}
For $(i, n)=1$, $\sigma_{n, \ell, i}^{ss} \cong  \sigma_{n, \ell, n-i}^{ss} \otimes \xi_{n,-1}^i$. Consequently $\sigma_{n, \ell, i}^{ss}$ and $\sigma_{n, \ell, n-i}^{ss}$  are equivalent when restricted to $G_{\Q(\zeta_{2n})}$. Further, for $n \ge 3$ odd, we have $ \sigma_{n, \ell, i}^{ss} \cong \sigma_{n, \ell, n-i}^{ss}$.
\end{prop}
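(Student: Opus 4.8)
The plan is to obtain Proposition \ref{nodd} as a direct consequence of the trace formula in Theorem \ref{thm:trace}, the character-sum identity \eqref{eq:cxconj}, and the standard principle (cf. \cite{Serre}) that a semisimple $\ell$-adic representation of $G_{\Q(\zeta_n)}$ is determined up to isomorphism by the traces of Frobenius at the finite places outside a fixed finite set.

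First I would fix $i$ coprime to $n$ and a finite place $\gp$ of $\Q(\zeta_n)$ not dividing $n\ell$. By Theorem \ref{thm:trace} with $K=\Q(\zeta_n)$ one has $\Tr\,\sigma_{n,\ell,i}(\Frob_\gp)=\sum_{x,y\in k_\gp}\xi_{\gp,n}^{i}(f_n(x,y))$ and, replacing $i$ by $n-i$, $\Tr\,\sigma_{n,\ell,n-i}(\Frob_\gp)=\sum_{x,y\in k_\gp}\xi_{\gp,n}^{n-i}(f_n(x,y))$. Identity \eqref{eq:cxconj} rewrites the first sum as $\bigl(\tfrac{-1}{\gp}\bigr)_n^{\,i}$ times the second, and by \eqref{eq:msymbol} the factor $\bigl(\tfrac{-1}{\gp}\bigr)_n$ is exactly $\xi_{n,-1}(\Frob_\gp)$. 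Hence $\Tr\,\sigma_{n,\ell,i}(\Frob_\gp)=\Tr\bigl(\sigma_{n,\ell,n-i}\otimes\xi_{n,-1}^{i}\bigr)(\Frob_\gp)$ for every such $\gp$; since these $\gp$ have Dirichlet density $1$, Chebotarev together with the fact that a semisimple representation is pinned down by its Frobenius traces yields the first assertion $\sigma_{n,\ell,i}^{ss}\cong\sigma_{n,\ell,n-i}^{ss}\otimes\xi_{n,-1}^{i}$, which is precisely \eqref{eq:twist}.

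The two consequences then follow by tracking the character $\xi_{n,-1}$. By \eqref{eq:msymbol} it corresponds to the extension $\Q(\zeta_n,\sqrt[n]{-1})=\Q(\zeta_{2n})$ of $\Q(\zeta_n)$, so its kernel is $G_{\Q(\zeta_{2n})}$; restricting the isomorphism just obtained to $G_{\Q(\zeta_{2n})}$ trivializes the twist $\xi_{n,-1}^{i}$ and gives $\sigma_{n,\ell,i}^{ss}|_{G_{\Q(\zeta_{2n})}}\cong\sigma_{n,\ell,n-i}^{ss}|_{G_{\Q(\zeta_{2n})}}$. When $n\ge 3$ is odd one has $\Q(\zeta_{2n})=\Q(\zeta_n)$, equivalently $\xi_{n,-1}$ is the trivial character of $G_{\Q(\zeta_n)}$ — concretely because $-1=(-1)^n$ is an $n$-th power in every residue field, so $\bigl(\tfrac{-1}{\gp}\bigr)_n=1$ for all $\gp$ — and therefore $\sigma_{n,\ell,i}^{ss}\cong\sigma_{n,\ell,n-i}^{ss}$ already over $\Q(\zeta_n)$.

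I do not expect a genuine obstacle here: all the analytic content was already carried out in the proof of Theorem \ref{thm:trace} and in the derivation of \eqref{eq:cxconj} from Greene's hypergeometric transformation formulas, so the proposition is essentially a bookkeeping restatement of \eqref{eq:twist} together with the elementary remark on the triviality of $\xi_{n,-1}$ for odd $n$. The only points that need a little care are to observe that Theorem \ref{thm:trace} applies simultaneously to the exponents $i$ and $n-i$ at the same places $\gp$, and to invoke correctly the determination of a semisimple Galois representation by its Frobenius traces.
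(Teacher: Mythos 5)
Your proposal is correct and takes essentially the same route as the paper: the proposition there is obtained precisely by combining Theorem \ref{thm:trace}, the Greene-identity relation \eqref{eq:cxconj}, the identification of $\Big(\frac{-1}{\gp}\Big)_n$ with the character $\xi_{n,-1}$ via \eqref{eq:msymbol}, and the fact that a semisimple representation is determined by its Frobenius traces (Chebotarev), yielding \eqref{eq:twist}. Your treatment of the consequences, via $\ker \xi_{n,-1} = G_{\Q(\zeta_{2n})}$ and the triviality of $\xi_{n,-1}$ for odd $n$, is also exactly the paper's.
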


\begin{remark}\label{rmk:A} Proposition \ref{nodd} is derived using identities on character sums given in \cite{Greene}. Another way to get the relation between $\sigma_{n,\ell,i}$ and $\sigma_{n,\ell, n-i}$ is to use the symmetry on the modular curve $X_n$ arising from the operator $A=\displaystyle{\left(\begin{matrix}-2 & -5\\1 & 2\end{matrix} \right)} \in \Gamma^0(5)$ which normalizes $\Gamma_n$. By choosing the Hauptmodul $t = E_1/E_2$ with $E_2 = E_1 |_A$ on $X_{\Gamma^1(5)}$, $A$ maps $t$ to $-1/t$ and $t_n$ to $\zeta_{2n}/t_n$. The relation $A \zeta = \zeta ^{-1}A$ on $X_n$ gives rise to the isomorphism $\sigma_{n, \ell, i} \cong \sigma_{n, \ell, n-i}$  for $n$ odd and $\sigma_{n, \ell, i}|_{G_{\Q(\zeta_{2n})}} \cong \sigma_{n, \ell, n-i} |_{G_{\Q(\zeta_{2n})}}$ for $n$ even.
\end{remark}
\subsection{A proof of Theorem \ref{thm:potentialauto}}  First we deal with reducible $\sigma_{n, \ell, i}$.

\begin{lemma}\label{lem-reducible} $\rho_{n,\ell}^{new}$ is automorphic if there is some $1 \le i \le n-1$ coprime to $n$ such that either
\begin{enumerate}

\item $\sigma_{n, \ell, i}$ is reducible, or

\item $\sigma_{n,\ell,i}$ is irreducible and $\sigma_{n, \ell,i} |_{G_{\Q(\zeta_{2n})}}$ is reducible.
\end{enumerate}

\end{lemma}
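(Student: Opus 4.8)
The plan is to deduce the lemma from the identification $\rho_{n,\ell}^{new} \cong \Ind_{G_{\Q(\zeta_n)}}^{G_\Q}\sigma_{n,\ell,i}$ furnished by Lemma \ref{lem-structrue} (valid for every $i$ coprime to $n$), combined with the observation that $\rho_{n,\ell}^{new}$ is a genuine $G_\Q$-invariant direct summand of the Scholl representation $\rho_{n,\ell}$ — cut out by the order-$n$ operator $\zeta^*$ — and hence, after replacing $\rho_{n,\ell}$ by its semisimplification as usual, is semisimple; consequently $\sigma_{n,\ell,i}$, as a subrepresentation of $\rho_{n,\ell}^{new}|_{G_{\Q(\zeta_n)}}$, is semisimple too. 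In both cases (1) and (2) the goal is to rewrite $\rho_{n,\ell}^{new}$ as the induction to $G_\Q$ of a geometric Hecke character over a cyclotomic field, and then appeal to the automorphy of algebraic Hecke characters (as in the proof of Proposition \ref{prop-auto}) together with solvable automorphic induction from \cite{AC89}, exactly in the spirit of Propositions \ref{prop-auto} and \ref{prop-save}.

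In case (1), write $\sigma := \sigma_{n,\ell,i} \cong \chi_1 \oplus \chi_2$ with $\chi_j$ characters of $G_{\Q(\zeta_n)}$; being constituents of the geometric representation $\rho_{n,\ell}|_{G_{\Q(\zeta_n)}}$, each $\chi_j$ is geometric, hence attached to an algebraic Hecke character of $\Q(\zeta_n)$ and therefore automorphic. Then $\rho_{n,\ell}^{new} \cong \Ind_{G_{\Q(\zeta_n)}}^{G_\Q}\chi_1 \oplus \Ind_{G_{\Q(\zeta_n)}}^{G_\Q}\chi_2$, and since $\Q(\zeta_n)/\Q$ is abelian, automorphic induction over this extension makes each summand correspond to an automorphic representation of $\GL_{\phi(n)}(\mathbb A_\Q)$; hence $\rho_{n,\ell}^{new}$ corresponds to their isobaric sum and is automorphic.

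In case (2), note first that $n$ must be even, since for $n$ odd $\Q(\zeta_{2n}) = \Q(\zeta_n)$ and the two hypotheses on $\sigma := \sigma_{n,\ell,i}$ would be contradictory; thus $[\Q(\zeta_{2n}):\Q(\zeta_n)] = 2$. As $\sigma|_{G_{\Q(\zeta_{2n})}}$ is semisimple and reducible it equals $\psi_1 \oplus \psi_2$ for characters $\psi_j$. If $\psi_1 \cong \psi_2$, then $\sigma(G_{\Q(\zeta_{2n})})$ would consist of scalars, so $\sigma(G_{\Q(\zeta_n)})$ would have a central subgroup of index at most $2$ and thus be abelian, forcing $\sigma$ to be reducible, contrary to hypothesis; hence $\psi_1 \not\cong \psi_2$, and they are swapped by $\gal(\Q(\zeta_{2n})/\Q(\zeta_n))$. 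Frobenius reciprocity then gives a nonzero $G_{\Q(\zeta_n)}$-map $\sigma \to \Ind_{G_{\Q(\zeta_{2n})}}^{G_{\Q(\zeta_n)}}\psi_1$, and since both sides are irreducible of dimension $2$ it is an isomorphism (this is also the content of Theorem \ref{thm:Clifford} in this situation). Transitivity of induction yields $\rho_{n,\ell}^{new} \cong \Ind_{G_{\Q(\zeta_{2n})}}^{G_\Q}\psi_1$ with $\psi_1$ geometric and $\Q(\zeta_{2n})/\Q$ abelian, whence $\rho_{n,\ell}^{new}$ is automorphic by automorphic induction of the algebraic Hecke character attached to $\psi_1$.

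The lemma is, in essence, a formal consequence of Clifford theory, the automorphy of Hecke characters, and solvable automorphic induction, so there is no serious analytic obstacle. The only step that is more than bookkeeping is the argument in case (2) that the two constituents $\psi_1,\psi_2$ of $\sigma|_{G_{\Q(\zeta_{2n})}}$ are non-isomorphic — i.e. that $\sigma$ is genuinely dihedral relative to $\Q(\zeta_{2n})/\Q(\zeta_n)$ rather than imprimitively reducible; I expect this to be the main point to get right, and it can be settled either by the elementary group-theoretic argument above or, alternatively, by invoking the fact that $\sigma_{n,\ell,i}$ has the two distinct Hodge--Tate weights $0$ and $-2$, as in Proposition \ref{prop-purity}.
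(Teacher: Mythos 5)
Your proposal is correct and takes essentially the same route as the paper: case (1) is handled by inducing the two geometric characters from $\Q(\zeta_n)$ and invoking automorphic induction over an abelian extension, and in case (2) you identify $\sigma_{n,\ell,i}\cong \Ind_{G_{\Q(\zeta_{2n})}}^{G_{\Q(\zeta_n)}}\psi_1$ (so $\rho_{n,\ell}^{new}\cong \Ind_{G_{\Q(\zeta_{2n})}}^{G_\Q}\psi_1$) and induct the geometric character, exactly as in the paper's Cases (2.1)--(2.2). The only difference is cosmetic---you exclude the degenerate situation by showing $\psi_1\not\cong\psi_2$ (scalar image) and citing Clifford theory for the swap, whereas the paper notes that Galois-stable constituents would extend to $G_{\Q(\zeta_n)}$ and contradict irreducibility; your optional aside about using distinct Hodge--Tate weights instead would need care (Proposition \ref{prop-purity} is stated for totally real base fields), but it is not needed for the argument.
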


\begin{proof} By Lemma \ref{lem-structrue}(1), $\sigma_{n,\ell,i}$ is $2$-dimensional and $\rho_{n,\ell}^{new} = \Ind_{G_{\Q(\zeta_n)}}^{G_\Q} \sigma_{n , \ell, i}$.  We divide the proof into two cases according to the assumptions.

(1)  $\sigma_{n, \ell, i}$ is reducible. Then it contains a $1$-dimensional subrepresentation $\chi_1$
 and its semi-simplification decomposes as $\chi_1 \oplus \chi_2$, where $\chi_1$ and $\chi_2$ are geometric characters of $G_{\Q(\zeta_n)}$. Then $\alpha := \Ind_{G_{\Q(\zeta_n)}}^{G_\Q} \chi_1$ is a $\phi(n)$-dimensional subrepresentation of $\rho_{n,\ell}^{new}$. As $\chi_1$ is automorphic and $\Q(\zeta_n)$ is a finite abelian extension of $\Q$, $\alpha$ is automorphic by automorphic induction. The same holds for the quotient $\beta := \rho_{n, \ell}^{new}/\alpha \cong \Ind_{G_{\Q(\zeta_n)}}^{G_\Q} \chi_2$. This proves that $\rho_{n, \ell}^{new}$ is automorphic.

(2) By the assumption on $\sigma_{n,\ell,i}$ and Theorem \ref{thm:Clifford}, $\sigma_{n, \ell,i} |_{G_{\Q(\zeta_{2n})}} = \xi_1 \oplus \xi_2$ for two regular characters $\xi_1$ and $\xi_2$ of $G_{\Q(\zeta_{2n})}$. Note that in this case $\Q(\zeta_{2n})$ is a quadratic extension of $\Q(\zeta_n)$ and $n$ is even. Let $g$ be an element in $G_{\Q(\zeta_n)} \smallsetminus G_{\Q(\zeta_{2n})}$. Then $\xi_1^g = \xi_1$ or $\xi_2$.

	Case (2.1) $\xi_1^g = \xi_2$. Then $\sigma_{n,\ell,i} = \Ind_{G_{\Q(\zeta_{2n})}}^{G_{\Q(\zeta_{n})}}\xi_1$ so that $\rho_{n,\ell}^{new} = \Ind_{G_{\Q(\zeta_{2n})}}^{G_\Q} \xi_1$ is automorphic by automorphic induction.

Case (2.2) $\xi_1^g = \xi_1$.  Then $\xi_2^g = \xi_2$ so that both $\xi_1$ and $\xi_2$ extend to characters of $G_{\Q(\zeta_{n})}$. This contradicts the irreducibility of $\sigma_{n,\ell,i}$.
\end{proof}

In view of the Lemma above, we shall assume that all $\sigma_{n, \ell, i} |_{G_{\Q(\zeta_{2n})}}$ with $(i, n)=1$ are absolutely irreducible for the rest of the proof.

When $n=2$, $\rho_{2, \ell}$ is $2$-dimensional and odd, hence is automorphic. Moreover, $\rho_{2,\ell} = \sigma_{2,\ell,1} \cong \xi_{2,-1} \otimes \rho_{2,\ell}$ by Proposition \ref{nodd}. This implies that $\rho_{2,\ell}$ has CM by $\Q(\sqrt{-1})$, as observed in \cite{Beukers-Stienstra}.

Now we prove the theorem for $n \ge 3$, in which case $\phi(n)$ is even. With $n$ fixed, denote by $F_n: = \Q(\zeta_n)^+$ the totally real subfield of the cyclotomic field $\Q(\zeta_n)$. The restriction of the complex conjugation $c$ to $\Q(\zeta_n)$ generates $\gal(\Q(\zeta_n)/\Q(\zeta_n)^+)$.  It follows from Lemma \ref{lem-structrue} and Proposition \ref{nodd} that
$$\tau_{n,\ell,i} :=  \Ind_{G_{\Q(\zeta_n)}}^{G_{F_n}} \sigma_{n , \ell , i} = \Ind_{G_{\Q(\zeta_n)}}^{G_{F_n}} \sigma_{n , \ell , n-i} $$ is potentially 2-isotypic over $\Q(\zeta_{2n})$ and
$$\displaystyle \rho_{n, \ell}^{new} \cong \text{Ind}_{G_{F_n}}^{G_\Q} \tau_{n,\ell, i}.    $$

To prove the potential automorphy of $\rho_{n,\ell}^{new}$ for $n \ge 3$, our strategy is to use Proposition \ref{prop-save}. For this purpose, we need to show, for each $1 \le i \le (n-1)/2, (i, n) = 1$, the existence of $2$-dimensional $\ell$-adic representations $\eta_{n , \ell , i}$ and $\eta _{n , \ell , n-i }$ of $G_{F_n}$ so that their restrictions to $G_{\Q(\zeta_n)} $ are finitely  projectively equivalent to $ \sigma_{n , \ell , i}$ and $\sigma_{n , \ell , n-i}$, respectively.
To do this, we divide the argument into two cases, according as $\tau_{n , \ell, i}$ reducible or irreducible. Observe from Lemma \ref{lem-structrue}(1)  that,  for any $g \in G_\Q$, $\tau_{n, \ell, i }^g = \tau_{n , \ell , \e(g)i}$. So the $\tau_{n , \ell , i}$'s will be simultaneously reducible or irreducible.

(i) $\tau_{n , \ell , i}$ is reducible. This includes all odd $n \ge 3$ because in this case $\sigma_{n,\ell,i} \cong \sigma_{n,\ell,i}^c = \sigma _{n , \ell , n-i}$ by Proposition \ref{nodd}. Since $\sigma_{n, \ell , i}$ and $\sigma _{n , \ell , n-i}$ are assumed to be  irreducible, this forces the semi-simplificiation of $\tau_{n , \ell, i}$ to be $\eta_{n ,\ell,  i}\oplus \eta_{n , \ell, n-i}$ so that $\eta_{n , \ell , i}|_{G_{\Q(\zeta_n)}} \simeq \sigma_{n , \ell, i}$  and $\eta_{n,\ell, n-i} = \eta_{n,\ell,i} \otimes \chi$ with $\chi$ the quadratic character associated to $\Q(\zeta_n)/F_n$. (In fact  $\tau_{n,\ell,i} = \Ind_{G_{\Q(\zeta_n)}}^{G_{F_n}} \sigma_{n,\ell, i} = \eta_{n,\ell,i} \oplus \eta_{n,\ell,i} \otimes \chi$ in this case.)
Now we can apply Corollary \ref{co-save} to conclude that $\rho_{n , \ell}^{new}$ is potentially automorphic, and in fact automorphic when $F_n = \Q$.

(ii) $\tau_{n , \ell , i}$ is irreducible. Then $n \ge 4$ is even. By Proposition \ref{nodd}, each $\tau_{n,\ell,i}$ is 2-isotypic over $\Q(\zeta_{2n})$ since $\sigma_{n,\ell,i}|_{G_{\Q(\zeta_{2n})}}$ is assumed to be irreducible.  According to Theorem \ref{thm:Clifford}, there exists a $2$-dimensional $\ell$-adic representation  $\eta_{n , \ell , i}$ of $G_{F_n}$ so that $\eta_{n , \ell , i}|_{G_{\Q(\zeta_{2n} )}}$ is finitely projectively  equivalent to $\sigma_{n , \ell , i}|_{G_{\Q(\zeta_{2n} )}}$. We also have $\eta_{n ,\ell , i}|_{G_{\Q(\zeta_{2n})}}\simeq \eta^c_{n ,\ell , i}|_{G_{\Q(\zeta_{2n})}}$ finitely projectively equivalent to $\sigma ^c_{n, \ell , i}|_{G_{\Q(\zeta_{2n} )}}= \sigma_{n ,\ell , n-i}|_{G_{\Q(\zeta_{2n} )}}$. So we may choose $\eta_{n, \ell, n-i} = \eta_{n, \ell, i}$. Thus the requirement of Corollary \ref{co-save} is satisfied and thus  $\rho_{n ,\ell}^{new}$ is potentially automorphic.

When $n=3, 4, 6$, the field $F_n=\Q$, $\rho_{n,\ell}^{new}$ is $4$-dimensional and it is $2$-isotypic over $\Q(\zeta_{2n})$. We can also conclude the automorphy of $\rho_{n,\ell}^{new}$ from Theorem \ref{thm:main2}. For $n=3$, the argument in (i) above shows that $\rho_{3,\ell}^{new}$ is the sum of two degree-$2$ automorphic representations which differ by the quadratic twist $\chi_{-3}$ attached to $\Q(\sqrt{-3})/\Q$, as shown in \cite{LLY}. When $n=4$ and $6$, Clifford theory (Theorem \ref{thm:Clifford}) implies that $\rho_{n,\ell}^{new}$ corresponds to an automorphic representation of $\GL_2 \times \GL_2$ over $\Q$, as explained in \cite{ALLL}. When $n=5$, as remarked before, the group $\Gamma_5$ is isomorphic to the congruence subgroup $\Gamma(5)$, hence $\rho_{5, \ell}$ is automorphic.

To complete the proof of Theorem \ref{thm:potentialauto}, it remains to show

\begin{prop}\label{prop-reducible}

If $\sigma_{n, \ell, i}$ is potentially reducible for some $i$ coprime to $n$, then $\rho_{n,\ell}^{new}$ is automorphic.

\end{prop}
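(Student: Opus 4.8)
The plan is to show that, in the potentially reducible case, $\rho_{n,\ell}^{new}$ is isomorphic to a representation automorphically induced from an algebraic Hecke character along a solvable extension of $\Q$, so that its automorphy follows from solvable automorphic induction.

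First I would dispose of the case in which $\sigma_{n,\ell,i}$ is reducible, which is exactly Lemma \ref{lem-reducible}(1). Thus I may assume $\sigma_{n,\ell,i}$ is irreducible but becomes reducible over some finite extension of $\Q(\zeta_n)$. Next I would record that $\sigma_{n,\ell,i}$ has the two \emph{distinct} Hodge--Tate weights $0$ and $-2$: it is a $2$-dimensional subrepresentation of $\rho_{n,\ell}|_{G_{\Q(\zeta_n)}}$, whose Hodge--Tate weights are $0$ and $-2$ (cf. the Hodge diamond in \S\ref{sec;En} and the properties of $\rho_{n,\ell}$), and purity, exactly as in the proof of Proposition \ref{prop-purity}, forces $\det\sigma_{n,\ell,i}$ to equal $\epsilon_\ell^{-2}$ times a character of finite order, so the two weights sum to $-2$ and cannot coincide. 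Consequently $\sigma_{n,\ell,i}$ cannot have finite projective image: otherwise $\sigma_{n,\ell,i}\otimes\sigma_{n,\ell,i}^\vee$ would have finite image and hence all Hodge--Tate weights equal to $0$, contradicting the fact that $\HT_\tau(\sigma_{n,\ell,i}\otimes\sigma_{n,\ell,i}^\vee)$ contains $2$ by Lemma \ref{lem:HT}(1). Now I would apply Clifford theory (Theorem \ref{thm:Clifford}) to the irreducible representation $\sigma_{n,\ell,i}$ of $G_{\Q(\zeta_n)}$; since $\dim\sigma_{n,\ell,i}=2=[M:\Q(\zeta_n)]\cdot\dim\gamma\cdot\dim\eta$ with $\dim\eta=1$, the exclusion of finite projective image forces $[M:\Q(\zeta_n)]=2$, and Theorem \ref{thm:Clifford} yields a quadratic extension $M/\Q(\zeta_n)$ and a character $\chi:=\gamma\otimes\eta$ of $G_M$ with $\sigma_{n,\ell,i}\cong\Ind_{G_M}^{G_{\Q(\zeta_n)}}\chi$. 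As a subrepresentation of the geometric representation $\rho_{n,\ell}|_{G_M}$, the character $\chi$ is geometric, hence is an algebraic Hecke character and therefore automorphic (see \cite{Fargues}).

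Finally, I would combine Lemma \ref{lem-structrue}(1) with transitivity of induction to obtain
$$\rho_{n,\ell}^{new}\cong\Ind_{G_{\Q(\zeta_n)}}^{G_\Q}\sigma_{n,\ell,i}\cong\Ind_{G_M}^{G_\Q}\chi,$$
and observe that $M/\Q$ is solvable, being a quadratic extension of the abelian field $\Q(\zeta_n)$. Solvable automorphic induction \cite{AC89} then gives that $\Ind_{G_M}^{G_\Q}\chi$, and hence $\rho_{n,\ell}^{new}$, is automorphic. I expect the only delicate point to be the Clifford-theoretic step: verifying that $\sigma_{n,\ell,i}$ is Hodge--Tate regular (to exclude finite projective image and guarantee that $M$ is genuinely quadratic over $\Q(\zeta_n)$) and confirming that the tower $M/\Q$ is solvable so that \cite{AC89} applies. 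Everything else is a formal consequence of results already established.
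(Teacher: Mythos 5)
Your strategy is genuinely different from the paper's: you work directly over $\Q(\zeta_n)$, argue that an irreducible but potentially reducible $\sigma_{n,\ell,i}$ must be induced from a geometric character $\chi$ of a quadratic extension $M/\Q(\zeta_n)$, and finish by solvable automorphic induction along $M/\Q$; the paper instead passes to the $2$-dimensional extension $\eta_{n,\ell,i}$ of $G_{F_n}$ produced in cases (i) and (ii) of the proof of Theorem \ref{thm:potentialauto} and applies Proposition \ref{prop-auto} over the totally real field $F_n$. The difference is not cosmetic, because precisely at the step you flag as delicate there is a genuine gap. Your exclusion of finite projective image rests on the claim that, ``exactly as in the proof of Proposition \ref{prop-purity}'', purity forces $\det\sigma_{n,\ell,i}=\epsilon_\ell^{-2}\chi$ with $\chi$ of finite order, hence distinct Hodge--Tate weights at every embedding. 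But the argument of Proposition \ref{prop-purity} uses in an essential way that the base field is totally real: it is Lemma \ref{lem:HT}(3) that makes $\HT_\tau(\det\eta_\ell)$ independent of $\tau$, and only then does purity pin down the exponent. Over the CM field $\Q(\zeta_n)$ this fails: a geometric character of $G_{\Q(\zeta_n)}$ whose Frobenius eigenvalues all have absolute value $N\gp^{2}$ is only constrained to have its $\tau$- and $\bar\tau$-Hodge--Tate weights sum to $-4$, so a priori $\det\sigma_{n,\ell,i}$ could have weight $0$ at $\tau$ and $-4$ at $\bar\tau$, i.e.\ $\sigma_{n,\ell,i}$ could have weights $\{0,0\}$ at $\tau$ and $\{-2,-2\}$ at $\bar\tau$. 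That is exactly the shape of a twist of a $2$-dimensional Artin representation by a CM-type geometric character: such a representation is irreducible, is potentially reducible (so it falls under the hypothesis of the proposition), yet in Theorem \ref{thm:Clifford} it gives $M=\Q(\zeta_n)$ and $\sigma_{n,\ell,i}\cong\eta\otimes\gamma$ with $\gamma$ of finite image, not a dihedral induction from a quadratic extension. In that scenario your final step collapses: there is no character $\chi$ of $G_M$ to induce, and $2$-dimensional Artin representations over a CM field are not known to be automorphic, so this case must be ruled out and your purity argument does not do it.

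The paper avoids this by never invoking Hodge--Tate regularity over $\Q(\zeta_n)$: using the Clifford/QM analysis (cases (i)--(ii) of the proof of Theorem \ref{thm:potentialauto} and \S\ref{ss:5.7}) it produces a $2$-dimensional representation $\eta_{n,\ell,i}$ of $G_{F_n}$ whose restriction to $G_{\Q(\zeta_{2n})}$ is a finite twist of $\sigma_{n,\ell,i}|_{G_{\Q(\zeta_{2n})}}$; Propositions \ref{prop-purity} and \ref{prop-auto} then apply to $\eta_{n,\ell,i}$ over the totally real field $F_n$ (the hypotheses hold with $K$ the fixed field of the twisting character), showing $\eta_{n,\ell,i}$ is irreducible with weights $\{0,-2\}$, induced from a character of a CM quadratic extension of $F_n$, and automorphic; automorphy of $\sigma_{n,\ell,i}$ and then of $\rho_{n,\ell}^{new}$ follows by twisting, base change and induction. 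If you import that detour---or otherwise prove that $\sigma_{n,\ell,i}$ cannot have finite projective image---the remainder of your argument (geometricity and automorphy of $\chi$, transitivity of induction, solvability of $M/\Q$, and the appeal to \cite{AC89} as in \S\ref{subsection:CS}) is sound.
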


\begin{proof}  In view of Lemma \ref{lem-reducible}, we may assume that $\sigma_{n , \ell , i} |_{G_{\Q(\zeta_{2n})}}$ is irreducible and it is potentially reducible. From the discussion of cases (i) and (ii) above,    there exists a representation $\eta_{n, \ell , i}$  of $G_{F_n}$ so that $\eta_{n , \ell , i}|_{G_{\Q(\zeta_{2n})}}$ is finitely projectively equivalent to $\sigma_{n , \ell , i}|_{G_{\Q(\zeta_{2n})}} . $ So $\eta_{n , \ell , i}$ is potentially reducible and hence is  automorphic  by Proposition \ref{prop-auto}. Hence so is $\eta_{n , \ell , i }|_{G_{\Q(\zeta_{2n})}}$, which is $\sigma_{n , \ell , i } |_{G_{\Q(\zeta_{2n})}} $ twisted by a finite character of $G_{\Q(\zeta_{2n})}$. Therefore $\sigma _{n , \ell, i }$ is automorphic, and so is its induction to $G_\Q$. This proves that $\rho_{n, \ell}^{new}$ is automorphic.
\end{proof}

\begin{remark}\label{rm-etachoice} When $\sigma_{n,\ell,i} |_{G_{\Q(\zeta_{2n})}}$ are assumed to be irreducible, we have seen from the above discussion that for each $\ell$ there exists a representation $\eta_{n , \ell , i}$ of
$G_{F_n}$ so that $ \eta_{n , \ell , i}|_{G_{\Q(\zeta_{n})}}$ is finitely projectively equivalent to $\sigma_{n , \ell , i}$.  We claim that $\eta_{n , \ell , i}$ can be chosen to be part of a compatible system. More precisely, there exists a compatible system $\{ E, S , \{Q_\gp (X)\}, \{\tilde \eta_{n ,\lambda, i}\}\}$ of $\ell$-adic Galois representation of $G_{F_n}$ (see \S \ref{subsection:CS}) so that for each $\ell$ there exists a prime $\lambda$  of $E$ over $\ell$ satisfying that $\tilde \eta_{n , \lambda, i} |_{G_{\Q(\zeta_n)}}$ is finitely projectively equivalent to $\sigma_{n , \ell, i}$. In other words, $\eta_{n , \ell, i}$ can be chosen to be some $\tilde \eta_{n , \lambda, i}$. Indeed,  we start with the $\eta_{n, \ell , i}$ for each $\ell$ from the proof of Theorem \ref{thm:potentialauto} without knowing whether they are part of a compatible system. The argument of the proof (in particular, when we use Proposition \ref{prop-save}) implies the existence of at least one large prime $\ell'$ so that $\eta_{n , \ell', i}$ is potentially automorphic. By  Theorem 5.5.1 in \cite{BGGT}, there exists a compatible system $\{ E, S , \{Q_\gp (X)\}, \{\tilde \eta_{n ,\lambda, i}\}\}$ of $\ell$-adic Galois representations of $G_{F_n}$ so that for some  prime $\lambda'$ of $E$ over $\ell'$ we have $\tilde \eta_{n , \lambda', i} = \eta_{n , \ell' , i}$. To see that the system $\{\tilde \eta_{n , \lambda, i} \}$ has the desired property, it suffices to check that, for each prime $\lambda$ of $E$ over $\ell$,  $\tilde \eta_{n , \lambda, i} |_{G_{\Q(\zeta_n)}}$ is finitely projectively equivalent to $\sigma_{n , \ell, i}$. Let $\chi_{n , \ell', i }$ be the character with finite image so that $\eta_{n , \ell', i} |_{G_{\Q(\zeta_n)}} = \sigma _{n , \ell', i}\otimes \chi _{n , \ell', i}$. Since the image of $\chi_{n , \ell' , i}$ in $\overline {\Q}_{\ell'}^\times$ is finite, it isomorphically imbeds in $\overline {\Q}^\times$ so that it forms a compatible system of $G_{\Q(\zeta_n)}$. Denote by $\chi_{n, \ell,i}$ the same character with image viewed in $\overline {\Q}_\ell^\times$. We claim that, for all $\ell$, $\tilde \eta_{n , \lambda, i} |_{G_{\Q(\zeta_n)}} = \sigma _{n , \ell, i} \otimes \chi _{n , \ell, i}$ for each $\lambda$ above $\ell$.  As $\sigma_{n , \ell, i}$ is assumed to be irreducible, it suffices to check the trace of $\Frob_{\gp}$ on both sides for all primes $\gp$ of $\Q(\zeta_n)$ not dividing $n \ell$, for then the semi-simplification of $\tilde \eta_{n , \lambda , i}|_{G_{\Q(\zeta_n)}}$  and hence $\tilde \eta_{n , \lambda, i}|_{G_{\Q(\zeta_n)}}$ will be irreducible and equal to $\sigma _{n , \ell, i} \otimes \chi _{n , \ell, i}$. The claim then follows from the compatibility of three systems: $\{\tilde \eta_{n , \lambda, i}\}$, $\{\chi_{n , \ell, i}\}$ and $\{\sigma_{n , \ell, i}\}$ (by
Corollary \ref{cor-compatible}), and the fact that the equality holds for $\lambda'$ and $\ell'$.
\end{remark}

\subsection{$\tau_{n,\ell,i}$ admits QM}\label{ss:5.7} The concept of quaternion multiplication over a field was introduced in \cite[Definition 3.1.1]{ALLL} for $4$-dimensional representations of $G_\Q$. We extend it to representations of finite index subgroups of $G_\Q$.

\begin{defn}\label{def:QM} Let $F$ be a number field.
A finite-dimensional representation $\rho$ of $G_F$ is said to admit \emph{quaternion multiplication (QM)} if there are two linear operators $J_+$ and $J_-$ acting on the representation space of $\rho$ such that
\begin{itemize}

\item[(1)] $J_\pm^2 = -$ id and $J_+ J_- = - J_- J_+$ (so that $J_\pm$ generate the quaternion group $Q_8$);

\item[(2)] There exist two multiplicative characters $\chi_\pm$ of $ G_F$ of order $\le 2$ such that  for any $g \in G_F$, $$\rho(g)\circ J_\pm =\chi_{\pm}(g) J_\pm\circ \rho(g).$$
\end{itemize}

We say that $\rho$ admits \emph{QM over $L$} if $\chi_\pm$ are trivial on $G_L$.

\end{defn}

Fix a choice of $1 \le i \le n-1$ coprime to $n$.
We claim that the representation $\tau_{n,\ell,i} = \Ind_{G_{\Q(\zeta_n)}}^{G_{F_n}} \sigma_{n , \ell , i}$ of $G_{F_n}$ studied in the previous subsection admits QM over $\Q(\zeta_{2n})$, generalizing the known results for $n=3,4,6$ discussed in \cite{ALLL}. To see this, recall the symmetry $A=\displaystyle{\left(\begin{matrix}-2 & -5\\1 & 2\end{matrix} \right)} \in \Gamma^0(5)$ mapping $t_n$ to $\zeta_{2n}/t_n$ mentioned in Remark \ref{rmk:A}. It induces a map $A$ on the model (\ref{eq:En2}) of $\E_n$
sending $(x, y, s)$ to $(1-x, 1/(1-xy), \zeta_{2n}(1-x)(1-y)x^2y/(s(1-xy)))$. The relation $ \zeta A \zeta = A$ on $\E_n$ gives rise to the relation
$$ \zeta^* A^* \zeta^* = A^*$$
satisfied by the operators $\zeta^*$ and $A^*$ acting on the space of $\rho_{n, \ell}^{new}$. The operator $\zeta^*$ is defined over $\Q(\zeta_n)$ while  $A^*$ is defined over $\Q(\zeta_{2n})$. When $n$ is odd, $\Q(\zeta_{2n}) = \Q(\zeta_n)$ is quadratic over the totally real subfield $F_n$ of $\Q(\zeta_n)$. When $n$ is even, $\Q(\zeta_{2n})$ is a biquadratic extension of  $F_n = \Q(\cos \frac{2 \pi}{n})$ with three quadratic intermediate fields: $F_{2n} = F_n(\cos \frac{2 \pi}{2n})$, $\Q(\zeta_n) = F_n(\zeta_4 \sin \frac{2\pi}{n})$, and  $F_n(\zeta_4 \sin \frac{2\pi}{2n})$. As finite abelian extensions of $\Q$, these three fields are characterized by the primes splitting completely in them, which are respectively $p \equiv \pm1 \mod 2n$, $p \equiv 1 \mod n$, and $p \equiv 1, n-1 \mod 2n$. The primes $p$ splitting completely in $F_n$ are $\equiv \pm 1 \mod n$.

\begin{lemma}\label{F_v} Let $v$ be a prime of $F_n$ dividing an odd prime $p \equiv \pm1 \mod n$ so that $Nv = p$.
Then on $\E_n$ we have $\Frob_v A = \zeta^{(1-p)/2} A \Frob_v$ and $\Frob_v \zeta = \zeta^p \Frob_v$. The actions of $\zeta^*$ and $A^*$ on $\rho_{n,\ell}^{new}$ satisfy
$$ A^* \Frob_v = \Frob_v A^* (\zeta^*)^{(1-p)/2} \quad {\rm and} \quad \zeta^* \Frob_v = \Frob_v (\zeta^*)^p.$$
\noindent Here we retain the same notation for the Frobenius action on $\rho_{n,\ell}^{new}$.
\end{lemma}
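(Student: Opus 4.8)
\emph{Plan of proof.} The strategy is to establish the two displayed identities first as relations between morphisms of $\E_n\otimes_\Q\overline\Q$, and then to transport them to the operators on $\rho_{n,\ell}^{new}\subset H^2_{et}(\E_n\otimes_\Q\overline\Q,\Q_\ell)$ via the contravariant functoriality of \'etale cohomology. I begin with the arithmetic input extracted from the hypotheses on $v$. Since the residue field of $v$ has cardinality $Nv=p$ with $p$ an odd prime and $p\equiv\pm1\pmod n$, one has $p\nmid n$ automatically (otherwise $p$ would divide both $n$ and $p\mp1$, hence $1$), so $\zeta_n$ and $\zeta_{2n}$ are units at $v$ and the arithmetic Frobenius $\AF_v$ raises each to its $p$-th power. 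Moreover $p^2\equiv1\pmod n$; and since $p$ is odd, writing $2^a$ for the exact power of $2$ dividing $n$, the congruence $p\equiv\pm1\pmod{2^a}$ upgrades to $p^2\equiv1\pmod{2^{a+1}}$, whence $p^2\equiv1\pmod{2n}$ as well. Consequently $\Frob_v=\AF_v^{-1}$ satisfies $\Frob_v(\zeta_n)=\zeta_n^{p}$ and $\Frob_v(\zeta_{2n})=\zeta_{2n}^{p}$, and also $\e(\Frob_v)\equiv p\pmod n$ in the notation $g(\zeta_n)=\zeta_n^{\e(g)}$.

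Next I establish the relations on $\E_n$. For $\zeta$, the identity $g\circ\zeta=\zeta^{\e(g)}\circ g$ established in the discussion preceding Lemma~\ref{lem-structrue}, applied with $g=\Frob_v$ and $\e(\Frob_v)\equiv p\pmod n$ (recall $\zeta$ has order $n$ on $\E_n$), gives $\Frob_v\circ\zeta=\zeta^{p}\circ\Frob_v$. For $A$, whose only non-rational coefficient is $\zeta_{2n}$, the conjugate morphism ${}^{\Frob_v}A:=\Frob_v\circ A\circ\Frob_v^{-1}$ is obtained from the explicit formula $A(x,y,s)=(1-x,\ 1/(1-xy),\ \zeta_{2n}(1-x)(1-y)x^2y/(s(1-xy)))$ by replacing $\zeta_{2n}$ by $\Frob_v(\zeta_{2n})=\zeta_{2n}^{p}$; comparing this with $\zeta^k\circ A$, which instead replaces the prefactor $\zeta_{2n}$ in the $s$-coordinate by $\zeta_n^{-k}\zeta_{2n}=\zeta_{2n}^{1-2k}$, and solving $1-2k\equiv p\pmod{2n}$ yields $k\equiv(1-p)/2\pmod n$. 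Hence ${}^{\Frob_v}A=\zeta^{(1-p)/2}\circ A$, i.e. $\Frob_v\circ A=\zeta^{(1-p)/2}\circ A\circ\Frob_v$ on $\E_n$.

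Finally I transfer these to $\rho_{n,\ell}^{new}$. Both $\zeta^*$ and $A^*$ preserve the subspace of $H^2_{et}(\E_n\otimes_\Q\overline\Q,\Q_\ell)$ underlying $\rho_{n,\ell}^{new}$: for $\zeta^*$ this is its very definition as the sum of the $\zeta_n^{i}$-eigenspaces with $(i,n)=1$, and for $A^*$ it follows from $\zeta^*A^*\zeta^*=A^*$ (recorded above, pulled back from $\zeta A\zeta=A$), which shows $A^*$ interchanges the $\zeta_n^{i}$- and $\zeta_n^{-i}$-eigenspaces. Since $p\nmid n$ automatically and we may take $p\neq\ell$, $\rho_{n,\ell}^{new}$ is unramified at $v$ and $\Frob_v$ acts on it. Now applying the contravariant functor $H^2_{et}$ to the morphism identities above — which turns $\Frob_v\circ\zeta\circ\Frob_v^{-1}=\zeta^{p}$ into $\Frob_v^{-1}\circ\zeta^*\circ\Frob_v=(\zeta^*)^{p}$ (with $\Frob_v$ also denoting the induced action, in the convention under which it is the geometric Frobenius), and $\Frob_v\circ A\circ\Frob_v^{-1}=\zeta^{(1-p)/2}\circ A$ into $\Frob_v^{-1}\circ A^*\circ\Frob_v=(\zeta^{(1-p)/2}\circ A)^*=A^*\circ(\zeta^*)^{(1-p)/2}$ — and rearranging gives exactly $\zeta^*\circ\Frob_v=\Frob_v\circ(\zeta^*)^{p}$ and $A^*\circ\Frob_v=\Frob_v\circ A^*\circ(\zeta^*)^{(1-p)/2}$, as claimed.

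The point demanding the most care is the exact exponent $(1-p)/2$ on $\zeta^*$: it rests on the strengthened congruence $p^2\equiv1\pmod{2n}$ — not merely $p^2\equiv1\pmod n$ — which is precisely where the oddness of $p$ enters, together with keeping the contravariance straight when passing from compositions of morphisms on $\E_n$ to compositions of operators on $H^2_{et}$. Everything else is routine bookkeeping.
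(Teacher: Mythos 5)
Your proof is correct and follows essentially the same route as the paper: verify both identities on $\overline\Q$-points of $\E_n$ using the explicit formulas for $\zeta$ and $A$ and the Galois action on $\zeta_{2n}$, then transfer to $\rho_{n,\ell}^{new}$ with the order reversed by contravariance of $H^2_{et}$. Your extra care about geometric versus arithmetic Frobenius via the congruence $p^2\equiv 1\pmod{2n}$ is a welcome refinement of a point the paper leaves implicit, but it does not change the argument.
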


\begin{proof} We prove the identities on $\E_n$ by checking the actions of the maps on a point $(x, y, s) \in \E_n(\bar \Q)$. The induced actions on $\rho_{n,\ell}^{new}$ satisfy the relations on $\E_n$ with reversed order because the operators act on a cohomology space. The first identity follows from
\begin{eqnarray*}
\Frob_v A(x, y, s)&=& \Frob_v(1-x, 1/(1-xy), \zeta_{2n}(1-x)(1-y)x^2y/(s(1-xy))) \\
&=& (1-\Frob_v(x), \frac{1}{1 - \Frob_v(xy)}, \zeta_{2n}^p\Frob_v(\frac{(1-x)(1-y)x^2y}{s(1-xy)}))
\end{eqnarray*}

and
\begin{eqnarray*}
\zeta^{(1-p)/2} A \Frob_v(x, y, s) &=& \zeta^{(1-p)/2} A (\Frob_v(x), \Frob_v(y), \Frob_v(s)) \\
&=& \zeta^{(1-p)/2}(1-\Frob_v(x), \frac{1}{1-\Frob_v(xy)},\zeta_{2n}\Frob_v(\frac{(1-x)(1-y)x^2y}{s(1-xy)})) \\
&=& (1-\Frob_v(x), 1/(1-\Frob_v(xy)), \zeta_{2n}^{p-1}\zeta_{2n}\Frob_v(\frac{(1-x)(1-y)x^2y}{s(1-xy)}));
\end{eqnarray*}
while the second identity results from $\zeta(x,y,s) = (x, y, \zeta_n^{-1}s)$ noted before:
$$\Frob_v \zeta(x, y, s) = \Frob_v(x, y, \zeta_n^{-1}s) = (\Frob_v(x), \Frob_v(y), \zeta_n^{-p} \Frob_v(s)) = \zeta^p \Frob_v(x, y, s).$$
\end{proof}

To show that $\tau_{n,\ell,i}$ admits QM, consider the operators

$$B_+:=(1+(\zeta^*)^{-1})A^* \quad {\rm and} \quad
B_-:=(1-(\zeta^*)^{-1})A^*$$

\noindent on the space of $\rho_{n,\ell}^{new}$. They leave invariant $\tau_{n,\ell,i} = \sigma_{n, \ell, i} \oplus \sigma_{n, \ell, n-i}$ since each summand is invariant under $\zeta^*$ and the two summands are swapped by $A^*$. It is straightforward to check, using the relations $\zeta^* A^*\zeta^* = A^*$ and $(A^*)^2 = -I$, that

$$ B_+^2 = -(2 + \zeta^* + (\zeta^*)^{-1}), \quad B_-^2 = -(2 - \zeta^* - (\zeta^*)^{-1}), \quad {\rm and} \quad B_+ B_- = -B_- B_+.$$

\noindent Consequently on $\tau_{n,\ell,i}$, we have $B_+^2 = -(2 + 2 \cos \frac{2i\pi}{n}) = -4 \cos^2 \frac{2 i \pi}{2n}$ and $B_-^2 = -(2 - 2 \cos \frac{2i \pi}{n}) = -4 \sin^2 \frac{2i\pi}{2n}$. Further, $$B:=B_+B_- = \zeta^*- (\zeta^*)^{-1}$$ on $\tau_{n,\ell,i}$ satisfies $B^2 = - 4 \sin^2 \frac{2i\pi}{n}$.

Next we determine the commuting relations between these operators and $G_{F_n}$.
As $A^*$ and $\zeta^*$ commute with $G_{\Q(\zeta_{2n})}$, so do  $B_\pm$ and $B$.

\begin{prop}\label{Bpm} (I) When $n$ is even, $\Q(\zeta_{2n})$ is a biquadratic extension of $F_n$. On $\rho_{n,\ell}^{new}$ we have

(1) $B_+$ commutes with $G_{F_{2n}}$, and $B_+ \rho_{n,\ell}^{new}(g) = - \rho_{n,\ell}^{new}(g) B_+$ for $g \in G_{F_n} \smallsetminus G_{F_{2n}}$;

(2) $B_-$  commutes with $G_{F_n(\zeta_4 \sin \frac{2\pi}{2n})}$, and $B_- \rho_{n,\ell}^{new}(g) = - \rho_{n,\ell}^{new}(g) B_-$ for $g \in G_{F_n} \smallsetminus G_{F_n(\zeta_4 \sin \frac{2\pi}{2n})}$;

(3) $B$ commutes with $G_{\Q(\zeta_n)}$, and $B \rho_{n,\ell}^{new}(g) = - \rho_{n,\ell}^{new}(g) B$ for $g \in G_{F_n} \smallsetminus G_{\Q(\zeta_n)}$.

(II) When $n$ is odd, $\Q(\zeta_{2n}) = \Q(\zeta_n)$ is a quadratic extension of $F_n$. On $\rho_{n,\ell}^{new}$ we have $B_-$ and $B$ commute with $G_{\Q(\zeta_n)}$ and anti-commute with elements in $G_{F_n} \smallsetminus G_{\Q(\zeta_n)}$.

Therefore $\tau_{n,\ell,i}$ admits QM over $\Q(\zeta_{2n})$ with operators $J_+:= \frac {1}{2 \sin \frac{2i\pi}{2n}} B_-$ and $J_-:= \frac{1}{2 \sin \frac{2i\pi}{n}} B$, and $\chi_{\pm}$ being quadratic characters of $G_{F_n}$ with kernels $G_{F_n(\zeta_4 \sin \frac{2\pi}{2n})}$ and $G_{\Q(\zeta_n)}$, respectively.
\end{prop}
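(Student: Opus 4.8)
\emph{Proof proposal.} The plan is to establish the three commuting relations first for Frobenius elements at degree‑one primes of $F_n$, and then extend them to all of $G_{F_n}$ by continuity. The only non‑formal input is Lemma~\ref{F_v}; I will freely use the identities recorded just above the proposition, namely $B_+^2=-(2+\zeta^*+(\zeta^*)^{-1})$, $B_-^2=-(2-\zeta^*-(\zeta^*)^{-1})$, $B=B_+B_-=-B_-B_+=\zeta^*-(\zeta^*)^{-1}$, $(A^*)^2=-I$, $\zeta^*A^*\zeta^*=A^*$, as well as the fact that $A^*,\zeta^*$ and hence $B_\pm,B$ already commute with $G_{\Q(\zeta_{2n})}$. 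Throughout write $\rho$ for $\rho_{n,\ell}^{new}$ and $W_j$ for the $\zeta_n^j$‑eigenspace of $\zeta^*$, so $A^*$ swaps $W_j$ with $W_{-j}$, $B$ preserves each $W_j$, and $\tau_{n,\ell,i}=\sigma_{n,\ell,i}\oplus\sigma_{n,\ell,n-i}$ sits inside $\rho$ as $W_i\oplus W_{-i}$, which is $G_{F_n}$‑stable and $B_\pm,B$‑stable.

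First I would take a prime $v$ of $F_n$ of residue degree one over $\Q$ not dividing $2\ell n$, so $Nv=p$ with $p$ odd and $p\equiv\pm1\pmod n$, and put $g=\Frob_v$. From Lemma~\ref{F_v}, as operators on $\rho$, $\zeta^*\rho(g)=\rho(g)(\zeta^*)^p$ and $A^*\rho(g)=\rho(g)A^*(\zeta^*)^{(1-p)/2}$; moving $\rho(g)$ to the left and using $A^*(\zeta^*)^k=(\zeta^*)^{-k}A^*$ a short computation gives
$$B_\pm\rho(g)=\rho(g)\big((\zeta^*)^{(p-1)/2}\pm(\zeta^*)^{-(p+1)/2}\big)A^*,\qquad B\rho(g)=\rho(g)\big((\zeta^*)^p-(\zeta^*)^{-p}\big).$$
Since $A^*$ and $\rho(g)$ are invertible, comparing with $\rho(g)B_\pm=\rho(g)(1\pm(\zeta^*)^{-1})A^*$ and $\rho(g)B=\rho(g)(\zeta^*-(\zeta^*)^{-1})$ reduces the assertions $B_\pm\rho(g)=s_\pm\rho(g)B_\pm$, $B\rho(g)=s\,\rho(g)B$ (with $s_\pm,s\in\{\pm1\}$) to operator identities diagonal on the $W_j$. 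On $W_j$ with $\gcd(j,n)=1$, writing $\zeta_n^j=\zeta_{2n}^{2j}$, these read $\cos\frac{pj\pi}{n}=s_+\cos\frac{j\pi}{n}$, $\sin\frac{pj\pi}{n}=s_-\sin\frac{j\pi}{n}$ and $\sin\frac{2pj\pi}{n}=s\sin\frac{2j\pi}{n}$. A case check on $p\bmod 2n$ (the residues $n\pm1$ occur only for $n$ even, in which case $j$ coprime to $n$ is automatically odd) shows that for $p\equiv 1,\ 2n-1,\ n+1,\ n-1\pmod{2n}$ the triple $(s_+,s_-,s)$ equals $(+,+,+),(+,-,-),(-,-,+),(-,+,-)$ respectively; in particular each sign is independent of $j$, so the identities hold on all of $\rho$.

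Next I would match signs with subfields. By the splitting descriptions recalled before the proposition, $p\equiv\pm1\pmod{2n}$ characterizes primes split in $F_{2n}$, $p\equiv1\pmod n$ those split in $\Q(\zeta_n)$, and $p\equiv1,\,n-1\pmod{2n}$ those split in $F_n(\zeta_4\sin\frac{2\pi}{2n})$. Hence, for $g=\Frob_v$ as above, $s_+(g)=+1$ iff $g\in G_{F_{2n}}$, $s(g)=+1$ iff $g\in G_{\Q(\zeta_n)}$, and $s_-(g)=+1$ iff $g\in G_{F_n(\zeta_4\sin\frac{2\pi}{2n})}$. Such $\Frob_v$ are dense in $G_{F_n}$ by the Chebotarev density theorem, and $g\mapsto\rho(g)^{-1}B_*\rho(g)$ is continuous while the relevant quadratic characters of $G_{F_n}$ are continuous, so these relations extend to all $g\in G_{F_n}$; this gives (I)(1)--(3). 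For $n$ odd all three fields coincide with $\Q(\zeta_n)=\Q(\zeta_{2n})$, giving (II). (One can bypass general density: since $B_\pm,B$ commute with $G_{\Q(\zeta_{2n})}$, the sign attached to $g$ depends only on $g|_{\Q(\zeta_{2n})}\in\gal(\Q(\zeta_{2n})/F_n)$, so it suffices to evaluate the relation for one $\Frob_v$ in each class.)

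Finally, on $\tau_{n,\ell,i}$ one has $B_-^2=-4\sin^2\frac{2i\pi}{2n}$ and $B^2=-4\sin^2\frac{2i\pi}{n}$, both nonzero since $n\ge3$ and $\gcd(i,n)=1$ forces $i\ne n/2$ and $2i\not\equiv0\pmod n$. Setting $J_+:=\frac{1}{2\sin\frac{2i\pi}{2n}}B_-$ and $J_-:=\frac{1}{2\sin\frac{2i\pi}{n}}B$ gives $J_\pm^2=-\,\t{id}$, and $J_+J_-=-J_-J_+$ because $B_-B=-BB_-$ follows from $B_+B_-=-B_-B_+$; this is Definition~\ref{def:QM}(1). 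Combining with the relations of the previous paragraph, $\rho(g)J_\pm=\chi_\pm(g)J_\pm\rho(g)$ for $g\in G_{F_n}$, where $\chi_+,\chi_-$ are the quadratic characters of $G_{F_n}$ cutting out $F_n(\zeta_4\sin\frac{2\pi}{2n})$ and $\Q(\zeta_n)$, both trivial on $G_{\Q(\zeta_{2n})}$; this is Definition~\ref{def:QM}(2), so $\tau_{n,\ell,i}$ admits QM over $\Q(\zeta_{2n})$. \textbf{The main obstacle.} The computation of $B_\pm\rho(g)$ is routine but requires careful bookkeeping of the half‑integer exponents $(p\pm1)/2$ taken modulo $n$; the real crux is matching each of the three sign patterns with the correct quadratic subfield of $\Q(\zeta_{2n})/F_n$ via its splitting criterion, after which Chebotarev (or the $G_{\Q(\zeta_{2n})}$‑equivariance shortcut) closes the argument.
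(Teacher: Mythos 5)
Your proposal is correct and takes essentially the same route as the paper: both use Lemma~\ref{F_v} to compute the commutation of $B_\pm$ and $B$ with Frobenius elements representing the residue classes $1,-1,n+1,n-1 \bmod 2n$, match the resulting signs with the quadratic subfields of $\Q(\zeta_{2n})/F_n$ via their splitting criteria, and descend to all of $G_{F_n}$ using that $B_\pm, B$ commute with $G_{\Q(\zeta_{2n})}$ (which the paper does implicitly by identifying $\gal(\Q(\zeta_{2n})/F_n)$ with those Frobenius classes). One small inaccuracy: for $n$ odd the field $F_{2n}$ equals $F_n$, not $\Q(\zeta_n)$, so not \emph{all} three fields coincide with $\Q(\zeta_n)$ (indeed $B_+$ then commutes with all of $G_{F_n}$), but since part (II) only concerns $B_-$ and $B$ this does not affect your argument.
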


\begin{proof} (I) For $n$ even, observe that $\Gal(\Q(\zeta_{2n})/F_n) \cong G_{F_n}/G_{\Q(\zeta_{2n})}$ is a Klein four group consisting of $\Frob_v$ with places $v$ of $F_n$ above any prime $p \equiv 1, -1, n+1, n-1\mod 2n$, respectively. This is because such primes $p$ split completely in  $\Q(\zeta_{2n})$, $F_{2n}$, $\Q(\zeta_n)$, and $F_n(\zeta_4 \sin \frac{2\pi}{2n})$, respectively, and these are the fixed fields of the corresponding $\Frob_v$ in $\Q(\zeta_{2n})$.

Now let $v$ be a prime of $F_n$ dividing an odd prime $p \equiv \pm1 \mod n$, we examine the commuting relation between $\Frob_v$ and $B_\pm$ and $B$. Using Lemma \ref{F_v}, we get
\begin{eqnarray*}
B_{\pm} \Frob_v &=& (1\pm (\zeta^{-1})^*)A^* \Frob_v =  (1\pm (\zeta^{-1})^*) \Frob_v A^* (\zeta^*)^{(1-p)/2}\\
&=& \Frob_v (1 \pm (\zeta^*)^{-p}) A^* (\zeta^*)^{(1-p)/2} = \Frob_v (1 \pm (\zeta^*)^{-p}) (\zeta^*)^{(p-1)/2} A^*.
\end{eqnarray*}

For $p \equiv 1 \mod 2n$, the place $v$ of $F_n$ splits completely in $\Q(\zeta_{2n})$. As observed above,  $B_\pm$ and $B$ commute with such $Frob_v$.
For $p \equiv -1 \mod 2n$, we have
$$ B_+ \Frob_v = \Frob_v (1 + \zeta^*)(\zeta^*)^{-1} A^* = \Frob_v B_+$$
and for $p \equiv n \pm 1 \mod 2n$ it is straightforward to verify $B_+ \Frob_v = - \Frob_v B_+$. This proves (1).

 For (2) we find for $v$ above $p \equiv n-1 \mod 2n$,
$$ B_- \Frob_v = \Frob_v (1 - (\zeta^*))(\zeta^*)^{n/2-1} A^* = - \Frob_v ((\zeta^*)^{-1}-1) = \Frob_v B_-$$
and one checks that for $v$ above $p \equiv -1$ and $n+1 \mod 2n$ we have $B_- \Frob_v = - \Frob_v B_-$.
This proves (2).

For (3) we note that
$$B \Frob_v = (\zeta^* - (\zeta^*)^{-1})\Frob_v = \Frob_v ((\zeta^*)^p - (\zeta^*)^{-p}).$$
Thus for $p \equiv 1 \mod n$ this gives $B \Frob_v = \Frob_v B$  and for $p \equiv -1 \mod n$ this gives $B\Frob_v = - \Frob_v B$, as asserted in (3).

(II) When $n$ is odd, $\Q(\zeta_{2n}) = \Q(\zeta_n)$ so that $\zeta^*$ and $A^*$ commute with $G_{\Q(\zeta_n)}$, and so do $B_\pm$ and $B$. Since $F_{2n} = F_n$ and $F_n(\zeta_4 \sin \frac{2\pi}{2n}) = \Q(\zeta_n)$ in this case,  the above computation shows that $B_-$ and $B$ commute with $G_{\Q(\zeta_n)}$ and anti-commute with elements in $G_{F_n}$ but outside $G_{\Q(\zeta_n)}$, while $B_+$ commutes with $G_{F_n}$.  This completes the proof of the proposition.

\end{proof}

Now we discuss the situation where the automorphy of $\rho_{n,\ell}^{new} = \Ind_{G_{\Q(\zeta_n)}}^{G_\Q} \sigma_{n,\ell,i}$ for $(i, n)=1$ is not yet known. Then $\sigma_{n,\ell,i}$ is strongly irreducible by Proposition \ref{prop-reducible}. Since $\tau_{n,\ell,i} = \Ind_{G_{\Q(\zeta_n)}}^{G_{F_n}} \sigma_{n,\ell,i}$ admits QM over $\Q(\zeta_{2n})$ (Proposition \ref{Bpm}), following the same argument as the proof of Theorem 3.2.1 in \cite{ALLL}, one obtains a finite character $\xi_n$ of $G_{\Q(\zeta_n)}$ such that $\sigma_{n,\ell,i} \otimes \xi_n$ extends to a degree-$2$ representation $\eta_{n,\ell,i}$ of $G_{F_n}$ and $\tau_{n,\ell,i} = \eta_{n,\ell,i} \otimes \Ind_{G_{\Q(\zeta_n)}}^{G_{F_n}} \xi_n^{-1}$. To handle the case that $\tau_{n,\ell,i}$ is irreducible, Case (ii) of the proof of  Theorem \ref{thm:potentialauto} used
Clifford theory  (Theorem \ref{thm:Clifford}) to conclude $\tau_{n,\ell,i} = \eta_{n,\ell,i} \otimes \gamma_{n,\ell,i}$ for a degree-$2$ representation $\eta_{n,\ell,i}$ of $G_{F_n}$ whose restriction to $G_{\Q(\zeta_{2n})}$ differs from $\sigma_{n,\ell,i} |_{G_{\Q(\zeta_{2n})}}$ by a finite character, and a degree-$2$ representation $\gamma_{n,\ell,i}$ of $G_{F_n}$ with finite image. Here using the QM structure, we gain the information that $\gamma_{n,\ell,i}$ can be chosen to be the representation induced from a finite character of $G_{\Q(\zeta_n)}$, and thus is automorphic. When $\tau_{n,\ell,i}$ is reducible, Case (i) of the proof of  Theorem \ref{thm:potentialauto} shows that we can also write $\tau_{n,\ell,i} = \eta_{n,\ell,i} \otimes \gamma_{n,\ell,i}$ with $\gamma_{n,\ell,i}$ being the sum  of two finite characters of $G_{F_n}$, in other words, $\Ind_{G_{\Q(\zeta_n)}}^{G_{F_n}} \xi_n^{-1}$ decomposes into the sum of two finite characters, which is also automorphic. The same argument as in Remark \ref{rm-etachoice} shows that we may assume $\eta_{n,\ell,i}$ to be part of a compatible system.

We summarize the conclusion of this section in the following remark.

\begin{remark}
For $n \ge 3$, $(i,n)=1$ and a prime $\ell$, the representation $\tau_{n,\ell,i}$ admits QM over $\Q(\zeta_{2n})$. By Theorem \ref{thm-try} the degree-$2$ representation $\eta_{n,\ell,i}$ of $G_{F_n}$ occurring in $\tau_{n,\ell,i}$ as above  is totally odd, with Hodge-Tate weights $0$ and $-2$, and potentially automorphic. Moreover, it is  automorphic if $F_n = \Q$ or it is potentially reducible. Further $\eta_{n,\ell,i}$ can be chosen to be part of a compatible system and $\tau_{n,\ell,i}$ is automorphic if and only if $\eta_{n,\ell,i}$ is. Therefore the automorphy of $\eta_{n,\ell,i}$ for all $(n,i) = 1$ will imply the automorphy of $\rho_{n,\ell}^{new}$ by automorphic induction.
\end{remark}

\subsection{Remarks on other families of Scholl representations attached to noncongruence subgroups} The noncongruence groups $\Gamma_n$ for $n \ne 5$ by construction are finite index normal subgroups of $\Gamma^1(5)$. The group $\G^1(5)$ is one of the  6 isomorphism classes of torsion-free index-12 subgroups in $PSL_2(\Z)$. In \cite{L5}, the authors constructed similar noncongruence  subgroups from other torsion-free index-12 subgroups, such as $\G_1(6)$.  Following Beauville, the authors in \cite{L5} use the equation $(xy+yx+zx)(x+y+z)= txyz/9$ for the universal family of elliptic curves with an order $6$ torsion point (see Table 6 of \cite{L5}) where $t =\frac{\eta(6z)^4\eta(z)^8}{\eta(3z)^8\eta(2z)^4}$ is a Hauptmodul for  $\Gamma_1(6)$. This plays the same role as \eqref{eq:5.4} in the $\G^1(5)$ case. When one considers  cyclic cover  of the modular curve of $\G_1(6)$ by replacing $t$ by $t_n =\sqrt[n]{t}$ and denotes by $\tilde \G_n$ the corresponding index-$n$ subgroup of $\G_1(6)$, then $S_3(\tilde \G_n)$ is of dimension $n-1$ and
{the modular curve $X_{\tilde \Gamma_n}$ and the universal elliptic curve over it} admit the  automorphism $\zeta: t_n\mapsto \zeta_n^{-1} t_n$. Because of $\zeta$ the Scholl representations associated with $S_3(\tilde \Gamma_n)$, when restricted to $G_{\Q(\zeta_n)}$,  also decompose into a sum of degree-$2$ factors $\sigma_{n,\ell,i}$ for $1\le i<n$. Using the same argument, one has $\sigma_{n,\ell,i}^c\cong\sigma_{n,\ell,n-i}$ and they are swapped by the  $W_3$ operator of $\G_1(6)$ which plays the role of $A$ for $\G^1(5)$.  On the universal elliptic curve,  $W_3$ gives rise to an isogeny map, see \S5.1 of \cite{L5}.  So, one can draw the same (potential) automorphy conclusion for the corresponding Scholl representations. It is worth pointing out that all cusps of $\G_1(6)$ are defined over $\Q$, and  any cusp can be sent to $\infty$ via one of the Atkin-Lenher involutions (see Table 12 of \cite{L5} for the corresponding linear transformation on $t$). Consequently one can construct other infinite families of cyclic subgroups of $\G_1(6)$ ramified only at two cusps which give rise to Galois representations of $G_\Q$ for which similar (potential) automorphy  conclusions can be drawn.

\renewcommand{\baselinestretch}{1.1}

\end{document}